\DeclareMathOperator*{\supp}{\mathrm{supp}}
\DeclareMathOperator*{\dist}{\mathrm{dist}}
\def\mb{\mathbb}
\def\Om{\Omega}
\def\ra{\rightarrow}
\def\pt{\partial}
\def\mcH{\mathcal{H}}
\newtheorem{lemma}{Lemma}[section]
\newtheorem{theorem}[lemma]{Theorem}
\newtheorem{remark}[lemma]{Remark}
\theoremstyle{definition}
\newtheorem{definition}[lemma]{Definition}
\begin{document}
\title{Extremal properties of the first eigenvalue and the fundamental gap of a sub-elliptic operator}

\date{}

\author{\sffamily Hongli Sun$^{1}$, Weijia Wu$^{1*}$, Donghui Yang$^1$ \\
	{\sffamily\small $^1$ School of Mathematics and Statistics, Central South University, Changsha 410083, China. }\\
}

\footnotetext[1]{Corresponding author: weijiawu@yeah.net}

\maketitle
\begin{center}
\begin{abstract}
 We consider the problems of extreming the first eigenvalue and the fundamental gap of a sub-elliptic operator with Dirichlet boundary condition, when the potential $V$ is subjected to a $p$-norm constraint.  The existence results for weak solutions, compact embedding theorem and spectral theory for sub-elliptic equation are given.  Moreover, we provide the specific characteristics of the corresponding optimal potential function.

 \vspace{0.2cm}

{\bf Keywords:}~ first eigenvalue, fundamental gap,  sub-elliptic  \vspace{0.2cm}  

{\bf AMS subject classifications:}~  35P15; 47A75 

\end{abstract}

\end{center}

\section{Introduction}
For the extremal properties of the first eigenvalue and the fundamental gap, it seems to be a broader focus on  Schr{\"o}dinger equation. In general, consider the operator $-\Delta+V(x)$ with the Dirichlet boundary condition, where $V$ is a potential, a multiplication operator, $x \in \Omega \subset \mathbb{R}^n$, $\Omega$ is a fixed bounded smooth domain. For reasonable potentials $V$, the spectrum is discrete and consists of non-negative eigenvalues which can be numbered in an increasing order as follows
\begin{equation}\label{202212181021}
	0<\lambda_1<\lambda_{2}\le \lambda_3\cdots\le \lambda_{m}\le \cdots.
\end{equation}
In quantum mechanics, these eigenvalues correspond to the energy levels, in atomic units, of a quantum particle in the potential energy $V$ imagined as $+\infty$ outside $\Omega$.  The first eigenvalue $\lambda_{1}(V)$ is generally referred as the ground state, the second $\lambda_{2}(V)$ is the first excited state and the difference $\Gamma(V)=\lambda_{2}(V)-\lambda_{1}(V)$ is the fundamental gap.

The fundamental gap typically has profound physical implications and mathematical senses. Simultaneously, the gap is also a core topic of interest in statistical mechanics and quantum field theory, especially,  if it is small enough of the size for the fundamental gap, it can produce the well-known tunnelling effect, hence, the problem of minimizing the fundamental gap seems very crucial. Moreover, the gap plays an important role in both numerical calculation and analysis, such as in the improvement of Poincar\'{e} inequality, a priori estimates. For more information, we can refer to \cite{ashbaugh2006fundamental,Henrot2006Extremum}.

The problem of maximizing the first eigenvalue of a Schr{\"o}dinger operator among potentials $V$ of given $L^p$ norm was initiated at least in the early 1960s \cite{keller1961lower}.  Subsequently, in one-dimensional case, detailed descriptions were given on \cite{essen1987estimating, talenti1984estimates,veling1982optimal,bennewitz1992optimal,karaa1998sharp}, in particular, the eigenvalue problem of unbounded interval was considered in \cite{ veling1982optimal,bennewitz1992optimal}. In a situation of multidimension, \cite{harrell1984hamiltonian,egnell1987extremal,ashbaugh1987maximal} provided the effective solution to ideas and methods for maximizing the first eigenvalue. The problem of extreming eigenvalues of
Schr{\"o}dinger operators on manifolds was considered, for example in \cite{Soufi2006critical,seo2014lp,freitas2001minimal}. For other types of operators, the interesting problem about the eigenvalue was also discussed, for instance, the Dirichlet eigenvalue problem for degenerate elliptic operator $-\Delta_X$ on $\Omega$ was considered on \cite{chenhualuo2015lower}, where $X=(X_1,X_2,\cdots, X_m)$  be a system of real smooth vector fields defined on $\Omega$ with the boundary $\partial{\Omega}$ which is non-characteristic for $X$. We also refer to \cite{meng2016optimal,brandolini2015optimal}  for similar topics.

Van den Berg put forward in 1983 \cite{van1983condensation} that the fundamental gap $\Gamma(V)$ is bounded below $3\pi^2/d^2$, where $d$ is the diameter of the convex domain.  One can find more results in \cite{ashbaugh2006fundamental}.   Beyond the work \cite{van1983condensation}, the recent history of the fundamental gap problem
mainly comes from \cite{singer1985estimate}, which obtained that $\Gamma(V)\ge \pi^2/4d^2$ of the general Schr{\"o}dinger problem for all dimensions $n$. This result has been improved in many subsequent works \cite{yu1986lower,wang2000estimation,ling2008estimates},  it was not until
2011 that Andrews and Clutterbuck \cite{Ben2011Proof} completely solved this conjecture. For one-dimensional case, there are also many excellent works, we can refer to \cite{ashbaugh1989optimal,horvath2003first,huang2009eigenvalue,el2022optimal,kerner2022lower,ashbaugh2020spectral}. Notably, the problem of extreming the fundamental gap of a Schr{\"o}dinger operator among potentials $V$ of given $L^p$ norm was studied on \cite{Ashbaugh1991ON,karaa1998extremal}. For more general operators, the gap of a second order self-adjoint operator was considered in a domain \cite{borisov2012spectral}, the boundary is partitioned into two parts with Dirichlet boundary condition on one of them, and Neumann condition on the other one. Wolfson \cite{Wolfson2015eigenvalue} gived a estimate the eigenvalue gap for a class of nonsymmetric second-order linear elliptic operators. The eigenvalue gap of the $p$-Laplacian eigenvalue problem was introduced in \cite{cheng2014dual}. 

Inspired by these works, in this paper we consider the extremal properties of the eigenvalue problem for the following degenerate sub-elliptic equation:
\begin{equation}\label{202301172048}
	\begin{cases}
		-\left(y^{2\alpha_1}\partial_{x}^2u+x^{2\alpha_2}\partial_{y}^2u\right)+Vu= \lambda u, & x\in \Omega, \\
		u=0, & x\in\partial \Omega, 
	\end{cases} 
\end{equation}
where $\alpha_1,\alpha_2\in (0,\frac{1}{2})$,
\begin{equation*}
	\Omega=\left\{(x,y)\in \mathbb{R}^2\mid 0<x<1, 0<y<1\right\},
\end{equation*}
and 
\begin{equation*}
V\in S_p=\left\{V\in L^{p}(\Omega)\left| \ \! V\ge 0 \ a.e., \|V\|_{L^p(\Omega)} \le M\right.\right\},
\end{equation*} 
$M$ is a positive constant, $ 1< p< \infty$. We shall discuss the solvability of the differential equation, and prove the corresponding compact embedding theorem. Based on these, the characteristics of spectrum are described as \eqref{202212181021}, which shows that it is meaningful to study the fundamental gap. The aim of the paper is to find optimal potentials associated to $\sup\limits_{V\in S_p}\lambda_{1}(V)$ and $\inf\limits_{V\in S_p}\Gamma(V)$ for some $p$. 

In this paper, we proceed as follows. In Section 2, we introduce weak solution space and provide the 
corresponding compact embedding theorem.  The local boundedness and regularity of the weak solution are established in Section 3.  In Section 4, we consider the maximum of the first eigenvalue problem \eqref{202301172048} when $V \in S_p$, $1<p<\infty$. In Section 5, the minimum problem of fundamental gap is characterized when $V \in S_p$, $2<p<\infty$.

\section{Solution space}
Our overall plan is first to define and then construct proper weak solution $u$  with respect to \eqref{202301231543} and only later to explore other properties of $u$.  The space we mentioned below may be involved in many works such as \cite{2009Maximum,sawyer2010degenerate,cavalheiro2008weighted,auscher2007maximal, kufner1984define}, but we will introduce more detailed results. Consider
\begin{equation}\label{202301231543}
	\begin{cases}
		-\left(y^{2\alpha_1}\partial_{x}^2u+x^{2\alpha_2}\partial_{y}^2u\right)+Vu= f, & x\in \Omega, \\
		u=0, & x\in\partial \Omega,
	\end{cases} 
\end{equation}
where $f \in L^2(\Omega)$, $\alpha_1,\alpha_2 \in (0,\frac{1}{2})$. 

Define
\begin{equation*}
	\mcH^1(\Omega)=\left\{u\in L^2(\Omega)\Bigm| y^{\alpha_1}\partial_{x}u\in L^2(\Omega), x^{\alpha_2}\partial_{y}u\in L^2(\Omega)\right\}, 
\end{equation*}
with scalar product
\begin{equation*}
	(u,v)_{\mcH^1(\Omega)}=\iint_\Omega uv +(y^{\alpha_1}\pt_{x}u)(y^{\alpha_1}\pt_{x}v)+\left(x^{\alpha_2}\pt_{y}u\right)\left(x^{\alpha_2}\pt_{y}v\right)dxdy,
\end{equation*}
endowed with the norm
\begin{equation*}
	\|u\|_{\mcH^1(\Omega)}=\left(\|u\|_{L^2(\Omega)}^2+\|y^{\alpha_1}\pt_{x}u\|_{L^2(\Omega)}^2+\|x^{\alpha_2}\pt_{y}u\|_{L^2(\Omega)}^2\right)^\frac{1}{2}. 
\end{equation*}
It is not hard to find that $\|u\|_{\mcH^1(\Omega)}=\sqrt{(\cdot,\cdot)_{\mcH^1(\Omega)}}$.

Throughout this paper,  let  $\|\cdot\|$ and  $(\cdot,\cdot)$ denote the norm and the inner product of $L^2(\Omega)$,  $H^1(\Omega)$ is the classical Sobolev space.

Let $\mcH_0^1(\Omega)$ denotes the closure of $C_0^\infty(\Omega)$ in the space $\mcH^1(\Omega)$, that is, 
\begin{equation*}
	\mcH_0^1(\Omega)=\overline{C_0^\infty(\Omega)}^{\mcH^1(\Omega)}. 
\end{equation*}

\begin{lemma}\label{Lem.2.1}\label{202210162153}	
	The space $(\mcH^1(\Omega),(\cdot,\cdot)_{\mcH^1(\Omega)})$ is a Hilbert space.   
\end{lemma}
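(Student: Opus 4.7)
The plan is to verify that $(\cdot,\cdot)_{\mathcal{H}^1(\Omega)}$ genuinely defines an inner product and then establish completeness, which is the substantive content of the lemma. Bilinearity and symmetry of $(\cdot,\cdot)_{\mathcal{H}^1(\Omega)}$ are immediate from the definition, and positive definiteness follows because $\|u\|_{\mathcal{H}^1(\Omega)}=0$ forces $\|u\|_{L^2(\Omega)}=0$. So the real task is completeness.

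I would take a Cauchy sequence $\{u_n\}\subset \mathcal{H}^1(\Omega)$. By the definition of the norm, the three sequences $\{u_n\}$, $\{y^{\alpha_1}\partial_x u_n\}$, and $\{x^{\alpha_2}\partial_y u_n\}$ are Cauchy in $L^2(\Omega)$. Using completeness of $L^2(\Omega)$, there exist $u, v, w \in L^2(\Omega)$ with $u_n \to u$, $y^{\alpha_1}\partial_x u_n \to v$, and $x^{\alpha_2}\partial_y u_n \to w$ in $L^2(\Omega)$.

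The main obstacle is to identify $v$ with $y^{\alpha_1}\partial_x u$ (and analogously $w$ with $x^{\alpha_2}\partial_y u$) in the sense of distributions. The key observation is that any test function $\varphi\in C_0^\infty(\Omega)$ has compact support inside $\Omega$, and since $\Omega=(0,1)^2$, the weight $y^{\alpha_1}$ is smooth and strictly positive on $\supp\varphi$. Therefore $y^{\alpha_1}\varphi\in C_0^\infty(\Omega)$, and integration by parts gives
\begin{equation*}
\iint_\Omega (y^{\alpha_1}\partial_x u_n)\varphi\,dxdy = -\iint_\Omega u_n\, y^{\alpha_1}\partial_x\varphi\,dxdy.
\end{equation*}
Passing to the limit in both sides using $L^2$ convergence yields
\begin{equation*}
\iint_\Omega v\,\varphi\,dxdy = -\iint_\Omega u\, y^{\alpha_1}\partial_x\varphi\,dxdy = \langle y^{\alpha_1}\partial_x u,\varphi\rangle_{\mathcal{D}',\mathcal{D}},
\end{equation*}
so $y^{\alpha_1}\partial_x u = v\in L^2(\Omega)$ distributionally. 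The same argument applied with the test function $x^{\alpha_2}\varphi$ gives $x^{\alpha_2}\partial_y u = w\in L^2(\Omega)$.

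Consequently $u\in \mathcal{H}^1(\Omega)$, and
\begin{equation*}
\|u_n - u\|_{\mathcal{H}^1(\Omega)}^2 = \|u_n-u\|_{L^2}^2 + \|y^{\alpha_1}\partial_x u_n - v\|_{L^2}^2 + \|x^{\alpha_2}\partial_y u_n - w\|_{L^2}^2 \longrightarrow 0,
\end{equation*}
which proves that $\mathcal{H}^1(\Omega)$ is complete. Combined with the inner-product verification, this yields the Hilbert space structure. I do not anticipate difficulties beyond the distributional identification above; the argument works precisely because $C_0^\infty$ test functions stay away from the degeneracy set $\{xy=0\}$, so multiplying by the weight keeps them admissible.
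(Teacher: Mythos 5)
Your proposal is correct and follows essentially the same route as the paper: pass the three Cauchy components to $L^2$ limits $u,v,w$, then identify $v=y^{\alpha_1}\partial_x u$ and $w=x^{\alpha_2}\partial_y u$ distributionally by testing against $\varphi\in C_0^\infty(\Omega)$ and exploiting that the weights depend only on the transverse variable (and are smooth, positive on $\supp\varphi$). Your remark on why multiplying by the weight keeps test functions admissible is a slightly more explicit justification of the same integration-by-parts step the paper uses.
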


\begin{proof}	
	Firstly, we easily verify that $(\mcH^1(\Omega),(\cdot,\cdot)_{\mcH^1(\Omega)})$ is an inner space. Let $\{u_n\}_{n\in\mb{N}}\subset\mcH^1(\Omega)$ be a Cauchy sequence, so that
	$\{u_n\}_{n\in\mb{N}},\{y^{\alpha_1}\pt_{x}u_n\}_{n\in\mb{N}},\{x^{\alpha_2}\pt_{y}u_n\}_{n\in\mb{N}}$  are Cauchy sequences in $L^2(\Omega)$.  Then there exist $u,v,w\in L^2(\Omega)$ such that 
	\begin{equation*}
		u_n\ra u, y^{\alpha_1}\pt_{x}u_n\ra v, x^{\alpha_2}\pt_{y}u_n\ra w \mbox{ strongly in } L^2(\Omega). 
	\end{equation*}
	For each $\varphi\in C_0^\infty(\Omega)$, we have 
	\begin{equation*}
		\begin{split}
			\iint_\Omega v\varphi dxdy \leftarrow \iint_\Omega \left(y^{\alpha_1}\pt_{x}u_n\right)\varphi dxdy =-\iint_\Omega u_n\left(y^{\alpha_1}\pt_{x}\varphi\right) dxdy \ra -\iint_\Omega  u\left({y^{\alpha_1}}\pt_{x}\varphi\right)dxdy, \ n\to \infty
		\end{split}
	\end{equation*}
	in the sense of distribution, which implies that 
	\begin{equation*}
	v=y^{\alpha_1}\pt_{x}u
	\end{equation*}
	in the sense of distribution. Naturally, $v=y^{\alpha_1}\pt_{x}u$ in $L^2(\Omega)$ since $w\in L^2(\Omega)$. Implement the same method again then $w=x^{\alpha_2}\pt_{y}u$ in $L^2(\Omega)$ is attained. All these imply that 
	\begin{equation*}
		u_n\ra u \mbox{  in }\mcH^1(\Omega).\qedhere
	\end{equation*}
\end{proof}

\begin{lemma}\label{Lem.20221006}
	The space $\mcH^1(\Omega)$ is separable and reflexive.  
\end{lemma}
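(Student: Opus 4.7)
The plan is to exploit the natural isometric embedding of $\mcH^1(\Omega)$ into a product of $L^2$-spaces, and then invoke the fact that separability and reflexivity pass to closed subspaces of Hilbert spaces.

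Concretely, first I would define the map
\[
T:\mcH^1(\Omega)\longrightarrow L^2(\Omega)\times L^2(\Omega)\times L^2(\Omega),\qquad T(u)=\bigl(u,\,y^{\alpha_1}\pt_x u,\,x^{\alpha_2}\pt_y u\bigr).
\]
From the very definition of $\|\cdot\|_{\mcH^1(\Omega)}$, this $T$ is linear and preserves norms when the target is endowed with the standard Hilbertian product norm $\|(f,g,h)\|^2=\|f\|^2+\|g\|^2+\|h\|^2$. Hence $T$ is a linear isometry, and in particular injective.

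Next I would argue that the image $T(\mcH^1(\Omega))$ is a closed subspace of the product $L^2(\Omega)^3$. Indeed, if $T(u_n)\to (f,g,h)$ in $L^2(\Omega)^3$, then $\{u_n\}$ is Cauchy in $\mcH^1(\Omega)$, and by Lemma~\ref{Lem.2.1} there exists $u\in\mcH^1(\Omega)$ with $u_n\to u$ in $\mcH^1(\Omega)$. Continuity of $T$ then forces $T(u)=(f,g,h)$, so the image is closed.

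Finally, I would combine three standard facts: (i) $L^2(\Omega)$ is both separable and reflexive, hence so is the finite product $L^2(\Omega)^3$; (ii) any closed subspace of a separable metric space is separable; (iii) any closed subspace of a reflexive Banach space is reflexive. Applying (ii) and (iii) to $T(\mcH^1(\Omega))\subset L^2(\Omega)^3$, and then transporting the conclusions back through the isometric isomorphism $T:\mcH^1(\Omega)\to T(\mcH^1(\Omega))$, I obtain that $\mcH^1(\Omega)$ itself is separable and reflexive. There is no genuine obstacle here; the only point requiring care is the closedness of the image, which is why Lemma~\ref{Lem.2.1} (completeness of $\mcH^1(\Omega)$) is essential to quote first.
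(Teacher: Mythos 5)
Your proposal is correct and follows essentially the same route as the paper: an isometric embedding of $\mcH^1(\Omega)$ onto a closed subspace of $L^2(\Omega)\times L^2(\Omega)\times L^2(\Omega)$ (closedness via the completeness from Lemma \ref{Lem.2.1}), with separability and reflexivity inherited from the product and transported back through the isometry. No substantive differences to note.
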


\begin{proof}
	Define $L_3^2(\Omega)=L^2(\Omega) \times L^2(\Omega)\times L^2(\Omega)$, where
	\begin{equation*}
		\|u\|_{L_3^2(\Omega)}=\left(\iint_\Omega |u_1|^2+ |u_2|^2+  |u_3|^2dxdy \right)^{\frac{1}{2}}
	\end{equation*}
	for $u=(u_1,u_2,u_3) \in L_3^2(\Om)$ as the norm of space 	$L_3^2(\Omega)$. It is evident that $L_3^2(\Omega)$ is a separable space.  Set
	\begin{equation*}
		Pu=(u, x^{\alpha_2}\pt_{y}u,y^{\alpha_1}\pt_{x}u),u\in \mcH^1(\Omega),
	\end{equation*}	
	clearly, $W=\{Pu\mid u\in \mcH^1(\Omega)\}$ is a subspace of $L_3^2(\Omega)$.  It is found that  $P$ is an isometric isomorphism of mapping $\mcH^1(\Omega)$ to $W$ in view of $\|Pu\|_{L_3^2(\Omega)}=\|u\|_{\mcH^1(\Omega)}$. Given that $\mcH^1(\Omega)$ is complete,  $W$ is a closed subspace of $L_3^2(\Omega)$, it implies that $W$ is separable. Note that $P$ is an isometric linear isomorphism, then $\mcH^1(\Omega)$ has the same properties. The reflexivity can be obtained in the same way.
\end{proof}

\begin{definition}
We define the space
	\begin{equation*}
	H(\Omega)=\left\{u\in \mcH^1(\Omega) \left|\ \! \iint_\Omega Vu^2dxdy < \infty \right.\right\},
\end{equation*}	
with scalar product
	\begin{equation*}
	(u,v)_{H(\Omega)}=\iint_\Omega uv +(y^{\alpha_1}\pt_{x}u)(y^{\alpha_1}\pt_{x}v)+\left(x^{\alpha_2}\pt_{y}u\right)\left(x^{\alpha_2}\pt_{y}v\right)+(V^\frac{1}{2}u)(V^\frac{1}{2}v)dxdy,
\end{equation*}
and the associated norm
\begin{equation*}
	\|u\|_{H(\Omega)}=\left(\|u\|_{L^2(\Omega)}^2+\|y^{\alpha_1}\pt_{x}u\|_{L^2(\Omega)}^2+\|x^{\alpha_2}\pt_{y}u\|_{L^2(\Omega)}^2+ \|V^\frac{1}{2}u\|_{L^2(\Omega)}^2\right)^\frac{1}{2}.
\end{equation*}
We also define the space $H_0(\Omega)=\left\{u\in \mcH_0^1(\Omega) \left|\ \! \iint_\Omega Vu^2dxdy < \infty \right.\right\}$. 
\end{definition}

\begin{lemma}\label{Lem.202210162141}
	The space$(H(\Omega),(\cdot,\cdot)_{H(\Omega)})$ is a  Hilbert space.  
\end{lemma}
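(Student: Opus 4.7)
The plan is to imitate the proof of Lemma \ref{Lem.2.1}, adding a fourth coordinate that tracks the potential term $V^{1/2}u$, and to identify the limit of this extra coordinate with $V^{1/2}u$ via an almost-everywhere subsequence argument.

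First I would verify quickly that $(\cdot,\cdot)_{H(\Omega)}$ is a genuine inner product on $H(\Omega)$: bilinearity and symmetry are immediate, and positive definiteness follows because each of the four summands is non-negative and the first one, $\iint_\Omega u^2\,dxdy=\|u\|_{L^2}^2$, already vanishes only when $u=0$. The associated norm squared then coincides with $(u,u)_{H(\Omega)}$.

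Next, for completeness, let $\{u_n\}_{n\in\mathbb N}\subset H(\Omega)$ be Cauchy in $\|\cdot\|_{H(\Omega)}$. Since $\|u\|_{\mcH^1(\Omega)}\le\|u\|_{H(\Omega)}$, the sequence is also Cauchy in $\mcH^1(\Omega)$, so by Lemma \ref{Lem.2.1} there exists $u\in\mcH^1(\Omega)$ with $u_n\to u$ in $\mcH^1(\Omega)$. Separately, $\{V^{1/2}u_n\}$ is Cauchy in $L^2(\Omega)$ and hence converges to some $g\in L^2(\Omega)$.

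The key step is to identify $g=V^{1/2}u$ a.e. Since $u_n\to u$ in $L^2(\Omega)$, extract a subsequence $u_{n_k}\to u$ pointwise a.e. in $\Omega$; then $V^{1/2}u_{n_k}\to V^{1/2}u$ pointwise a.e. On the other hand, $V^{1/2}u_{n_k}\to g$ in $L^2(\Omega)$, so a further subsequence converges to $g$ pointwise a.e. Uniqueness of a.e. limits forces $g=V^{1/2}u$ a.e., which yields
\begin{equation*}
\iint_\Omega Vu^2\,dxdy=\|V^{1/2}u\|_{L^2(\Omega)}^2=\|g\|_{L^2(\Omega)}^2<\infty,
\end{equation*}
so $u\in H(\Omega)$. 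Combining the $\mcH^1(\Omega)$-convergence with $V^{1/2}u_n\to V^{1/2}u$ in $L^2(\Omega)$ gives $\|u_n-u\|_{H(\Omega)}\to 0$, completing the proof.

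I do not foresee a serious obstacle; the only slightly delicate point is the identification $g=V^{1/2}u$, because $V$ is only in $L^p(\Omega)$ and we cannot multiply the $L^2$-limit by $V^{1/2}$ term-by-term directly. The subsequence-a.e. trick (rather than, say, integrating against test functions as in Lemma \ref{Lem.2.1}) circumvents this cleanly without needing any regularity of $V$ beyond measurability and non-negativity.
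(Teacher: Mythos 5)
Your proof is correct and follows essentially the same route as the paper: reduce to Lemma \ref{Lem.2.1} for the $\mcH^1(\Omega)$ part and identify the $L^2$-limit of $V^{1/2}u_n$ with $V^{1/2}u$ via an almost-everywhere subsequence argument. The only cosmetic difference is that the paper closes the identification with Fatou's lemma applied to the Cauchy property, while you use a second a.e.-convergent subsequence of $V^{1/2}u_{n_k}\to g$ and uniqueness of a.e.\ limits; both are equally valid.
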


\begin{proof}
Let $\{u_n\}_{n\in\mb{N}}\subset H(\Omega)$ be a Cauchy sequence, thus $\{V^\frac{1}{2}u_n\}_{n\in\mb{N}}$  is a Cauchy sequence in $L^2(\Omega)$, i.e., there exists $g \in L^2(\Omega)$ such that 
\begin{equation*}
	V^\frac{1}{2}u_n \to g \ \mbox{ in } \ L^2(\Omega).	
\end{equation*}
From Lemma \ref{202210162153} it follows that $\|u_n-u\|_{\mcH^1(\Omega)} \to 0$, as $n \to \infty$, it is sufficient to prove that
\begin{equation}\label{202210170933}
g= V^\frac{1}{2}u \ \mbox{  in } \ L^2(\Omega).	
\end{equation}
Since $\{V^\frac{1}{2}u_n\}_{n\in\mb{N}}$ is a Cauchy sequence, one has for $\forall \epsilon>0$, there exists $N \in \mathbb{N}$, such that $\|V^\frac{1}{2}u_m-V^\frac{1}{2}u_n\|_{L^2(\Omega)}< \epsilon$ for any $m, n \ge N$. Moreover, there is a subsequence of $\{u_n\}_{n\in\mb{N}}$,  denoted by $\{u_{n_j}\}_{j\in\mb{N}}$, such that 
   \begin{equation*}
   	u_{n_j} \rightarrow u	\ \mbox{ a.e. as} \ j \rightarrow \infty,
   \end{equation*}
given that $u_n \to u$ in $L^2(\Omega)$. This implies that $V^\frac{1}{2}u_{n_j} \to V^\frac{1}{2}u$ a.e.\! as  $j \rightarrow \infty$. Fixed $n\ge N$,  applying Fatou Theorem
 \begin{equation*}
 	\begin{split}
 	\iint_{\Omega} \left|V^\frac{1}{2}u-V^\frac{1}{2}u_n\right|^2dxdy&=\iint_{\Omega} \lim\limits_{j \to \infty} \left|V^\frac{1}{2}u_{n_j}(x)-V^\frac{1}{2}u_n(x)\right|^2dxdy\\	
 	&\le \varliminf\limits_{j \to \infty} \iint_{\Omega} \left|V^\frac{1}{2}u_{n_j}-V^\frac{1}{2}u_n\right|^2dxdy\le \epsilon^2.
 	\end{split}
\end{equation*}
So far, \eqref{202210170933} is verified.	
\end{proof}

\begin{lemma}\label{202301231730}
	$\mcH^1(\Omega)$ is continuously embedded into $W^{1,1}(\Omega)$.
\end{lemma}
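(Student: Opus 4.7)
The plan is to bound each of $\|u\|_{L^1(\Omega)}$, $\|\partial_x u\|_{L^1(\Omega)}$, $\|\partial_y u\|_{L^1(\Omega)}$ by a constant multiple of $\|u\|_{\mcH^1(\Omega)}$. The decisive ingredient is that $2\alpha_1, 2\alpha_2 \in (0,1)$, so the reciprocal weights $y^{-\alpha_1}$ and $x^{-\alpha_2}$ are square-integrable on the unit square $\Omega$; once this is noticed, Cauchy--Schwarz does all the work.

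First, since $|\Omega|=1<\infty$, the inclusion $L^2(\Omega)\subset L^1(\Omega)$ is immediate: $\|u\|_{L^1(\Omega)}\le |\Omega|^{1/2}\|u\|_{L^2(\Omega)}$. For the first partial derivative, interpret $y^{\alpha_1}\partial_x u \in L^2(\Omega)$ as the existence of $g_1 \in L^2(\Omega)$ with
\begin{equation*}
\iint_\Omega g_1 \varphi \, dxdy = -\iint_\Omega u\, y^{\alpha_1}\partial_x\varphi\, dxdy \quad \text{for all } \varphi\in C_0^\infty(\Omega).
\end{equation*}
Because any test function $\psi \in C_0^\infty(\Omega)$ is supported in $\{y \ge \delta\}$ for some $\delta>0$, the function $y^{-\alpha_1}\psi$ is again in $C_0^\infty(\Omega)$; substituting $\varphi=y^{-\alpha_1}\psi$ shows that the distributional derivative $\partial_x u$ coincides with the $L^1_{\mathrm{loc}}$ function $y^{-\alpha_1}g_1$ on $\Omega$. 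Then by Cauchy--Schwarz,
\begin{equation*}
\|\partial_x u\|_{L^1(\Omega)} = \|y^{-\alpha_1}g_1\|_{L^1(\Omega)} \le \|y^{-\alpha_1}\|_{L^2(\Omega)}\,\|g_1\|_{L^2(\Omega)} = \left(\tfrac{1}{1-2\alpha_1}\right)^{1/2}\|y^{\alpha_1}\partial_x u\|_{L^2(\Omega)},
\end{equation*}
where the weight integral is finite precisely because $\alpha_1<1/2$. The argument for $\partial_y u$ is symmetric, with $x^{-\alpha_2}$ replacing $y^{-\alpha_1}$ and giving the constant $(1-2\alpha_2)^{-1/2}$.

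Combining the three estimates produces an embedding constant $C=C(\alpha_1,\alpha_2)$ such that $\|u\|_{W^{1,1}(\Omega)}\le C\|u\|_{\mcH^1(\Omega)}$, which is exactly the continuous embedding. I do not foresee a genuine obstacle; the only point that deserves care is the justification that the product $y^{-\alpha_1}g_1$ really is the distributional $\partial_x u$, and this reduces to the observation above that test functions live away from the coordinate axes. The restriction $\alpha_1,\alpha_2<1/2$ is used solely, but essentially, in the integrability of the reciprocal weights.
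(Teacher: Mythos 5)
Your proof is correct and follows essentially the same route as the paper: the paper's proof is exactly the H\"older/Cauchy--Schwarz estimate $\|\partial_x u\|_{L^1(\Omega)}\le \bigl(\iint_\Omega y^{-2\alpha_1}\,dxdy\bigr)^{1/2}\|y^{\alpha_1}\partial_x u\|_{L^2(\Omega)}$ (and its symmetric counterpart), valid because $\alpha_1,\alpha_2<\tfrac12$. Your additional care in identifying $\partial_x u$ with $y^{-\alpha_1}g_1$ distributionally, and the trivial $L^2\subset L^1$ bound for $u$ itself, are details the paper leaves implicit but change nothing in substance.
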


\begin{proof}
For any $u \in \mcH^1(\Omega)$,	using H{\"o}lder inequality,
	\begin{equation*}
		\begin{split}		
		\iint_{\Omega} |\partial_x u|dxdy& \le \left(\iint_{\Omega} y^{-2\alpha_1} dxdy\right)^{\frac{1}{2}} \left(\iint_{\Omega} |y^{\alpha_1}\partial_x u|^2 dxdy\right)^{\frac{1}{2}} \\
		&\le C \|y^{\alpha_1}\partial_x u\|_{L^2(\Omega)}.
	    \end{split}
	\end{equation*}
The same procedure is implemented again, we have $\|\partial_y u\|_{L^1(\Omega)} \le C \|x^{\alpha_2}\partial_y u\|_{L^2(\Omega)}$.		 
\end{proof}

\begin{lemma}\label{202301232024}
$H_0^1(\Omega) \subset \mcH_0^1(\Omega) \subset \mcH^1(\Omega) \cap W_0^{1,1}(\Omega)$.
\end{lemma}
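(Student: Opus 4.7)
The plan is to unpack each inclusion by observing that all three spaces are (or contain) completions of $C_0^\infty(\Omega)$ in progressively weaker norms, so that the inclusions reduce to continuity estimates on the smooth compactly supported test functions.

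First I would show $H_0^1(\Omega)\subset \mcH_0^1(\Omega)$. For any $u\in C_0^\infty(\Omega)$, since $y,x\in(0,1)$ and $\alpha_1,\alpha_2>0$, we have $y^{2\alpha_1}\le 1$ and $x^{2\alpha_2}\le 1$ on $\Omega$, so
\begin{equation*}
\|u\|_{\mcH^1(\Omega)}^2 = \|u\|_{L^2}^2 + \|y^{\alpha_1}\pt_x u\|_{L^2}^2 + \|x^{\alpha_2}\pt_y u\|_{L^2}^2 \le \|u\|_{H^1(\Omega)}^2.
\end{equation*}
Now given $u\in H_0^1(\Omega)$, pick $\{u_n\}\subset C_0^\infty(\Omega)$ with $u_n\to u$ in $H^1(\Omega)$. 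By the estimate above $\{u_n\}$ is Cauchy in $\mcH^1(\Omega)$, and by Lemma \ref{Lem.2.1} it converges to some $v\in\mcH_0^1(\Omega)$. Since $u_n\to u$ and $u_n\to v$ both hold in $L^2(\Omega)$, we conclude $u=v\in\mcH_0^1(\Omega)$.

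The middle inclusion $\mcH_0^1(\Omega)\subset \mcH^1(\Omega)$ is immediate from the definition $\mcH_0^1(\Omega)=\overline{C_0^\infty(\Omega)}^{\mcH^1(\Omega)}$.

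For $\mcH_0^1(\Omega)\subset W_0^{1,1}(\Omega)$, I would invoke Lemma \ref{202301231730}: the inclusion $\mcH^1(\Omega)\hookrightarrow W^{1,1}(\Omega)$ is continuous. Given $u\in \mcH_0^1(\Omega)$, choose $\{u_n\}\subset C_0^\infty(\Omega)$ with $u_n\to u$ in $\mcH^1(\Omega)$. By continuity of the embedding, $u_n\to u$ in $W^{1,1}(\Omega)$ as well. Since $u_n\in C_0^\infty(\Omega)$ and $W_0^{1,1}(\Omega)=\overline{C_0^\infty(\Omega)}^{W^{1,1}(\Omega)}$, the limit $u$ lies in $W_0^{1,1}(\Omega)$.

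No step looks particularly delicate: the whole argument is a standard "approximation transfers between completions whenever the identity on $C_0^\infty$ is bounded between the respective norms." The only point to watch carefully is making sure uniqueness of the $L^2$ limit is used to identify the limits in (1) (so that the embedding $H_0^1\hookrightarrow \mcH_0^1$ really sends $u$ to $u$, not to some other representative), but this is routine given that both norms dominate the $L^2$ norm.
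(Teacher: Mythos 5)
Your proof is correct and follows essentially the same approach as the paper: bound the $\mcH^1$-norm by the $H^1$-norm (using $y^{2\alpha_1},x^{2\alpha_2}\le 1$) for the first inclusion, and invoke Lemma \ref{202301231730} for the continuous embedding $\mcH^1(\Omega)\hookrightarrow W^{1,1}(\Omega)$ to get the third, transferring membership through approximation by $C_0^\infty(\Omega)$ in each case. Your extra care with Cauchy sequences and $L^2$-limit uniqueness in the first inclusion is a slightly more explicit version of the paper's one-line argument $\|u_n-u\|_{\mcH^1}\le C\|u_n-u\|_{H^1}\to 0$, but the substance is identical.
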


\begin{proof}
	For $u \in H_0^1(\Omega)$, there is a sequence $\{u_n\}_{n\in\mb{N}} \in C_0^\infty(\Omega)$ such that $\|u_n-u\|_{H^1(\Omega)} \to 0$, $n \to \infty$, and we observe that 
	\begin{equation*}
		\|u_n-u\|_{\mcH^1(\Omega)} \le C\|u_n-u\|_{H^1(\Omega)} \to 0, \ n \to \infty,
	\end{equation*}	
	therefore $u \in \mcH_0^1(\Omega)$.

	For $u \in \mcH_0^1(\Omega)$, there is a sequence $\{u_n\}_{n\in\mb{N}} \in C_0^\infty(\Omega)$ such that $\|u_n-u\|_{\mcH^1(\Omega)} \to 0$, $n \to \infty$. And from Lemma \ref{202301231730}
	\begin{equation*}
		\|u_n-u\|_{W^{1,1}(\Omega)} \le C\|u_n-u\|_{\mcH^1(\Omega)} \to 0, \ n \to \infty,
    \end{equation*}	
so that we have $ \mcH_0^1(\Omega) \subset \mcH^1(\Omega) \cap W_0^{1,1}(\Omega)$.		
\end{proof}

From Lemma \ref{202301232024}, we know that it makes sense to consider Dirichlet condition problem \eqref{202301231543}.

\begin{lemma}\label{202301160920}
For any $u\in C_0^\infty(\Omega)$, we have 
\begin{equation}\label{202302192045}
\|u\|_{L^\frac{2\delta}{1-\delta}(\Omega)} \le\left( \iint_\Omega |y^{\alpha_1}\partial_xu|^2   +  |x^{\alpha_2}\partial_yu|^2dxdy  \right)^\frac{1}{2}	
\end{equation}
for $\frac{1}{2}\le\delta<\min\left\{\frac{1}{\alpha_1+1},\frac{1}{\alpha_2+1}\right\}$.	
\end{lemma}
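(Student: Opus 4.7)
The plan is to derive a pointwise envelope for $|u|$ using the 1D Sobolev embedding in each coordinate, and then integrate. Since $u \in C_0^\infty(\Omega)$ vanishes on $\partial\Omega$, for each fixed $y$ the slice $x \mapsto u(x,y)$ lies in $W_0^{1,2}((0,1))$, so
$$|u(x,y)| = \left|\int_0^x \partial_s u(s,y)\,ds\right| \le \int_0^1 |\partial_s u(s,y)|\,ds \le y^{-\alpha_1}\left(\int_0^1 |y^{\alpha_1}\partial_s u(s,y)|^2\, ds\right)^{1/2}$$
by pulling out $y^{-\alpha_1}$ and applying Cauchy-Schwarz on the unit interval. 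Writing $\phi(y) := \|y^{\alpha_1}\partial_x u(\cdot,y)\|_{L^2_x}$ and $\psi(x) := \|x^{\alpha_2}\partial_y u(x,\cdot)\|_{L^2_y}$, this yields $|u(x,y)| \le y^{-\alpha_1}\phi(y)$ and, by the symmetric argument, $|u(x,y)| \le x^{-\alpha_2}\psi(x)$. A crucial observation is that $\|\phi\|_{L^2(0,1)}^2 = \|y^{\alpha_1}\partial_x u\|_{L^2(\Omega)}^2$ and similarly for $\psi$, which is exactly the quantity appearing on the right-hand side of the target inequality.

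Next, taking the geometric mean of the two pointwise bounds gives the key estimate
$$|u(x,y)|^2 \le x^{-\alpha_2}y^{-\alpha_1}\phi(y)\psi(x).$$
Raising this to the power $r/2$ with $r := 2\delta/(1-\delta)$ and integrating, the $x$- and $y$-dependence in the resulting RHS separate cleanly, so
$$\iint_\Omega |u|^r\, dxdy \le \left(\int_0^1 y^{-r\alpha_1/2}\phi(y)^{r/2}\, dy\right)\left(\int_0^1 x^{-r\alpha_2/2}\psi(x)^{r/2}\, dx\right).$$
I would then estimate each of these one-dimensional integrals by H\"older's inequality, splitting the integrand into a weight factor in some $L^p$ and a power of $\phi$ (resp.\ $\psi$) in the conjugate $L^{p'}$, chosen so that $\int \phi^{rp'/2}\,dy$ reduces to a power of $\|\phi\|_{L^2}^2$ via Jensen (since the underlying interval has unit length). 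This yields $\iint|u|^r \le C\|\phi\|_{L^2}^{r/2}\|\psi\|_{L^2}^{r/2}$; an application of AM-GM $ab \le \tfrac{1}{2}(a^2+b^2)$ after taking $r$-th roots converts the product into the desired sum on the RHS.

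The main technical hurdle will be verifying that the H\"older exponents can be arranged so that the weight integrability constraints $\alpha_i r p/2 < 1$ match the stated range $\delta < \min\{1/(\alpha_1+1), 1/(\alpha_2+1)\}$, equivalently $r < 2/\max(\alpha_1,\alpha_2)$. A naive execution of the plan gives the weaker condition $r < 4/(2\alpha_i+1)$, so closing the gap likely requires either the sharper pointwise bound $|u(x,y)|^2 \le 2y^{-\alpha_1}\|u(\cdot,y)\|_{L^2_x}\phi(y)$ (from Cauchy-Schwarz applied to $u^2(x,y) = 2\int_0^x u\,\partial_s u\, ds$) combined with Fubini to reuse the $L^2$-integrability of $u$, or a bootstrap argument starting from the Poincar\'e base case $\delta = 1/2$ and iterating the inequality applied to $|u|^t$ for suitable $t > 1$. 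This interpolation/bootstrap step, which reconciles the weight integrability with the critical Sobolev exponent dictated by scaling $(x,y) \mapsto (\lambda x,\lambda y)$, is where the argument becomes delicate.
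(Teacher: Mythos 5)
Your setup is fine, but the proposal does not actually prove the lemma on the stated range of $\delta$, and you say so yourself: the slicing bound $|u(x,y)|\le y^{-\alpha_1}\phi(y)$, the geometric mean, and the H\"older/Jensen step on each one-dimensional factor force the two constraints $\tfrac{r\alpha_i p}{2}<1$ and $p\ge \tfrac{4}{4-r}$, which are compatible only for $r<\tfrac{4}{2\alpha_i+1}<4$, whereas the lemma claims all $r=\tfrac{2\delta}{1-\delta}$ up to $\tfrac{2}{\max\{\alpha_1,\alpha_2\}}>4$ (and the paper later needs precisely the $L^4$ case, in Theorem \ref{202210121135}). The two repairs you gesture at are left unexecuted, and the one you describe most concretely --- bootstrapping from the Poincar\'e case $\delta=\tfrac12$ by applying the inequality to $|u|^t$ --- runs into exactly the obstruction it is supposed to remove: an $L^2$-gradient base inequality applied to $|u|^t$ produces $\bigl\||u|^{t-1}\,y^{\alpha_1}\partial_x u\bigr\|_{L^2}$ on the right, which cannot be split into $\||u|^{t-1}\|_{L^2}\,\|y^{\alpha_1}\partial_x u\|_{L^2}$ without higher integrability of $u$, so the iteration does not close as stated. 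Hence there is a genuine gap: the step that reaches the full range $\tfrac12\le\delta<\min\{\tfrac{1}{\alpha_1+1},\tfrac{1}{\alpha_2+1}\}$ is missing.

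The missing idea, as the paper implements it, is to make the \emph{base} inequality an $L^1$-gradient estimate rather than a pointwise/$L^2$-slice estimate. From $|u|^{\delta}\le\bigl(\int_0^1|\partial_x u|\,ds\bigr)^{\delta}$ and its $y$-analogue one multiplies, integrates, separates variables, and inserts the weights as $y^{-\alpha_1\delta}y^{\alpha_1\delta}$ before applying H\"older with exponents $\tfrac{1}{1-\delta},\tfrac{1}{\delta}$; the weight integral $\int_0^1 y^{-\alpha_1\delta/(1-\delta)}dy$ converges exactly when $\delta<\tfrac{1}{\alpha_1+1}$, so the admissible range appears already at this stage, and one gets $\|u\|_{L^{2\delta}}\le C\bigl(\|y^{\alpha_1}\partial_x u\|_{L^1}+\|x^{\alpha_2}\partial_y u\|_{L^1}\bigr)$. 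Because the right-hand side is an $L^1$ norm, substituting $|u|^{\gamma}$ with $\gamma=\tfrac{1}{1-\delta}$ lets Cauchy--Schwarz peel off $\||u|^{\gamma-1}\|_{L^2}=\|u\|_{L^{2\delta/(1-\delta)}}^{\delta/(1-\delta)}$ against the $L^2$ weighted gradient norms, and dividing by that factor yields the claimed inequality in one stroke, with no iteration and no loss in the exponent range. If you want to salvage your slicing approach you would need an analogous self-improvement device; as written, your argument only establishes the embedding for the strictly smaller range $r<\tfrac{4}{2\alpha_i+1}$.
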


\begin{proof}
For any $u\in C_0^\infty(\Omega)$, we observe that
\begin{equation*}
	u(x,y)=\int_0^{x}\partial_x(s,y)ds,
\end{equation*}	
then
\begin{equation}\label{20220925-1}
	|u(x,y)|\le \int_0^1 \left|\partial_xu(s,y)\right| ds.
\end{equation}	

Similarly, we have $|u(x,y)|\le \int_0^1 |\partial_yu(x,t)| dt$, furthermore, for $0<\delta<1$,

\begin{equation}\label{20220925-2}
|u(x,y)|^\delta   \le   \left(\int_0^1 \big|\partial_x(s,y)\big| ds \right)^\delta ,\quad |u(x,y)|^\delta   \le   \left(\int_0^1 \big|\partial_y(x,t)\big| dt \right)^\delta .
\end{equation}	
Therefore
\begin{align*}
&\int_0^1 \int_0^1	|u(x,y)|^{2\delta}dx dy \\
&  \le   \int_0^1 \int_0^1  \left\{ \left(\int_0^1 \big|\partial_x(s,y)\big| ds \right)^\delta   \left(\int_0^1 \big|\partial_y(x,t)\big| dt \right)^\delta                 \right\}dxdy\\
&=\int_0^1 \left(\int_0^1 \big|\partial_y(x,t)\big| dt \right)^\delta \left\{\int_0^1 \left(\int_0^1 \big|\partial_x(s,y)\big| ds \right)^\delta dy                \right\} dx\\
&=\int_0^1 \left(\int_0^1 \big|\partial_x(s,y)\big| ds \right)^\delta dy  \cdot       \int_0^1 \left(\int_0^1 \big|\partial_y(x,t)\big| dt \right)^\delta dx\\
&=\int_0^1 \left(\int_0^1 \big|\partial_xu\big|dx\right)^\delta  y^{-\alpha_1\delta} y^{\alpha_1\delta}  dy  \cdot\int_0^1 \left(\int_0^1 \big|\partial_yu\big| dy \right)^\delta x^{-\alpha_2\delta}  x^{\alpha_2 \delta}   dx\\
&\le \left\{\int_0^1 y^{-\frac{\alpha_1\delta}{1-\delta}}dy  \right\}^{1-\delta} 
\left\{\int_0^1 y^{\alpha_1}\int_0^1 \big|\partial_xu\big| dy dx    \right\}^\delta\cdot  \left\{\int_0^1 x^{-\frac{\alpha_2\delta}{1-\delta}}dx  \right\}^{1-\delta} 
 \left\{\int_0^1 x^{\alpha_2}\int_0^1 \big|\partial_yu\big| dy dx    \right\}^\delta.
\end{align*}

Let $\frac{1}{2}\le\delta<\min\left\{\frac{1}{\alpha_1+1},\frac{1}{\alpha_2+1}\right\}$, we have
\begin{equation*}
	\int_0^1 \int_0^1 |u(x,y)|^{2\delta}dxdy \le C \|y^{\alpha_1} \partial_xu\|_{L^1(\Omega)}^\delta \|x^{\alpha_2} \partial_yu\|_{L^1(\Omega)}^\delta,
\end{equation*}
that is,
\begin{equation}\label{20220925a1}
	\begin{split}
	\|u\|_{L^{2\delta}(\Omega)}
	&\le C \|y^{\alpha_1}\partial_x u\|_{L^1(\Omega)}^{\frac{1}{2}} \|x^{\alpha_2} \partial_yu\|_{L^1(\Omega)}^{\frac{1}{2}}\le C\left( \|y^{\alpha_1}\partial_x u\|_{L^1(\Omega)}+  \|x^{\alpha_2} \partial_yu \|_{L^1(\Omega)} \right).
\end{split} 
\end{equation}

We put  $|u|^\gamma$ $(\gamma>1)$ into \eqref{20220925a1} and obtain
\begin{equation}\label{20220925a2}
	\begin{split}
	\left\| |u|^\gamma\right\|_{L^{2\delta}(\Omega)}& \le C \left(\left \||u|^{\gamma-1} y^{\alpha_1}\partial_xu\right\|_{L^1(\Omega)}+\left\| |u|^{\gamma-1} x^{\alpha_2} \partial_yu\right\|_{L^1(\Omega)}\right)\\
	& \le C  \left\||u|^{\gamma-1}\right\|_{L^2(\Omega)}  \left( \left\|y^{\alpha_1}\partial_xu\right  \|_{L^2(\Omega)} +  \left\|x^{\alpha_2}\partial_yu\right\|_{L^2(\Omega)}                 \right).
\end{split} 
\end{equation}

Taking $\gamma=\frac{1}{1-\delta}$, 
\begin{equation*}
\left\||u|^\gamma\right\|_{L^{2\delta}(\Omega)}=\left(\|u\|_{L^\frac{2\delta}{1-\delta}(\Omega)}\right)^{\frac{1}{1-\delta}}, \quad 	\left\||u|^{\gamma-1}\right\|_{L^2(\Omega)}=\left(\|u\|_{\frac{2\delta}{1-\delta}(\Omega)}\right)^\frac{\delta}  {1-\delta},			
\end{equation*}
 the inequality \eqref{20220925a2}  becomes: 
\begin{equation*}
\left(||u||_{L^\frac{2\delta}{1-\delta}(\Omega)}\right)^{\frac{1}{1-\delta}}    \le C	\left(\|u\|_{L^\frac{2\delta}{1-\delta}(\Omega)}\right)^\frac{\delta}  {1-\delta}	\left( \|y^{\alpha_1}\partial_xu \|_{L^2(\Omega)} +  \|x^{\alpha_2}\partial_yu\|_{L^2(\Omega)}                 \right),		
\end{equation*}
therefore 
\begin{equation*}
	\begin{split}
		\|u\|_{L^\frac{2\delta}{1-\delta}(\Omega)}	&\le C \left( \|y^{\alpha_1}\partial_xu  \|_{L^2(\Omega)} +  \|x^{\alpha_2}\partial_yu\|_{L^2(\Omega)}  \right)\\
		&\le C	\left( \iint_\Omega |y^{\alpha_1}\partial_xu|^2   +  |x^{\alpha_2}\partial_yu|^2dxdy  \right)^\frac{1}{2}.		
	\end{split}	
\end{equation*}	
\end{proof}

\begin{remark}\label{202209261124}
 We easily obtain that the inequality \eqref{202301160920} is established for all $u\in \mcH_0^1(\Omega)$ when $\frac{1}{2}\le\delta<\min\left\{\frac{1}{\alpha_1+1},\frac{1}{\alpha_2+1}\right\}$.  
 Moreover, the Poincar\'e inequality is received by taking $\delta=\frac{1}{2}$. Hence, the equivalent norm of $\mcH_0^1(\Omega)$ and $H_0(\Omega)$ are
\begin{equation*}
	\begin{split}
		\|u\|_{\mcH_0^1(\Omega)}&=\left(\|y^{\alpha_1}\pt_{x}u\|_{L^2(\Omega)}^2+\|x^{\alpha_2}\pt_{y}u\|_{L^2(\Omega)}^2\right)^\frac{1}{2},\\
		\|u\|_{H_0(\Omega)}&=\left(\|y^{\alpha_1}\pt_{x}u\|_{L^2(\Omega)}^2+\|x^{\alpha_2}\pt_{y}u\|_{L^2(\Omega)}^2+ \|V^\frac{1}{2}u\|_{L^2(\Omega)}^2\right)^\frac{1}{2}.	
	\end{split}
\end{equation*}
\end{remark}

\begin{theorem}\label{202210121135}
$\mcH_0^1(\Omega)$ is compactly embeded in $L^m(\Omega)$, where $m\in [1,4)$.
\end{theorem}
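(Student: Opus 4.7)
The plan is to combine three ingredients already established in the paper: the continuous embedding $\mathcal{H}^1(\Omega)\hookrightarrow W^{1,1}(\Omega)$ from Lemma \ref{202301231730}, the classical Rellich--Kondrachov compactness theorem for $W_0^{1,1}$ on a bounded planar domain, and the Sobolev-type inequality of Lemma \ref{202301160920} (extended to all of $\mathcal{H}_0^1(\Omega)$ by Remark \ref{202209261124}) to upgrade $L^1$-convergence to $L^m$-convergence via H\"older interpolation.

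Concretely, I take an arbitrary sequence $\{u_n\}_{n\in\mathbb{N}}$ bounded in $\mathcal{H}_0^1(\Omega)$. By Lemma \ref{202301231730} together with Lemma \ref{202301232024} it is also bounded in $W_0^{1,1}(\Omega)$. Since $\Omega\subset\mathbb{R}^2$ and the Sobolev conjugate of $1$ in dimension $2$ is $2$, the classical Rellich--Kondrachov theorem yields a subsequence (not relabelled) and $u\in L^1(\Omega)$ with $u_n\to u$ strongly in $L^1(\Omega)$.

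To promote this to convergence in $L^m$ I exploit the higher integrability from Lemma \ref{202301160920}. Because $\alpha_1,\alpha_2\in(0,\tfrac{1}{2})$, the quantity $\min\{1/(\alpha_1+1),1/(\alpha_2+1)\}$ is strictly larger than $2/3$, so I can fix some $\delta$ with $\tfrac{2}{3}<\delta<\min\{1/(\alpha_1+1),1/(\alpha_2+1)\}$. Setting $q:=2\delta/(1-\delta)$ gives $q>4$, and Remark \ref{202209261124} supplies a uniform bound $\|u_n\|_{L^q(\Omega)}\le C$; Fatou's lemma transfers the bound to $u$.

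Finally, for any $m\in[1,4)\subset[1,q)$ I choose $\theta\in(0,1]$ via $1/m=\theta+(1-\theta)/q$ and apply the standard interpolation inequality
$$\|u_n-u\|_{L^m(\Omega)}\le \|u_n-u\|_{L^1(\Omega)}^{\theta}\,\|u_n-u\|_{L^q(\Omega)}^{1-\theta}.$$
The second factor stays bounded while the first tends to $0$, so the left-hand side vanishes. I do not anticipate a substantial obstacle; the only point requiring care is the arithmetic verification that the restriction $\alpha_1,\alpha_2<\tfrac{1}{2}$ really does grant an admissible $\delta$ producing $q>4$ in Lemma \ref{202301160920}, which is precisely the feature that pushes the compact embedding up to the exponent $4$.
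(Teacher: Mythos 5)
Your argument is correct and is essentially the paper's own proof: both pass through the compact embedding into $L^1(\Omega)$ via $W^{1,1}(\Omega)$ (Lemmas \ref{202301231730} and \ref{202301232024}), then use Lemma \ref{202301160920}/Remark \ref{202209261124} for a uniform higher-integrability bound and conclude by interpolation. The only cosmetic differences are that the paper takes $\delta=\tfrac{2}{3}$ (so an $L^4$ bound) and interpolates on the differences $u_n-u_m$, whereas you take $\delta>\tfrac{2}{3}$ (so $q>4$) and interpolate $u_n-u$ against the limit, which changes nothing essential.
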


\begin{proof}
	Since $W^{1,1}(\Omega)$ is compactly embeded into $L^1(\Omega)$, we find that the embedding $\mcH_0^1(\Omega)\hookrightarrow L^1(\Omega)$ is compact by Lemma \ref{202301231730}. In other words,
	let $\{u_n\}_{n\in\mb{N}}\subset\mcH_0^1(\Omega)$ be a bounded sequence with a upper bound $M_1$,   there exists a subsequence of $\{u_n\}_{n\in\mb{N}}$, still denoted by itself,  converge  in $L^1(\Omega)$.

By Lemma \ref{202301160920}, we obtain that $\{u_n\}_{n\in\mb{N}}$ is bounded in $L^{4}(\Omega)$ since $\min\left\{\frac{1}{\alpha_1+1},\frac{1}{\alpha_2+1}\right\} > \frac{2}{3}$. Through the interpolation inequality for $m \in (1,4)$, we have  
\begin{equation*}
\begin{split}
		\|u_n-u_m\|_{L^m(\Omega)}&\le \|u_n-u_m\|_{L^1(\Omega)}^\alpha \|u_n-u_m\|_{L^{4}(\Omega)}^{1-\alpha} \le (2M_1)^{1-\alpha} \|u_n-u_m\|_{L^1(\Omega)}^\alpha
\end{split}	
	\end{equation*}
for some $\alpha \in (0,1)$, given that $\{u_n\}_{n\in\mb{N}}$ converge in $L^1(\Omega)$, choosing $n,m$ sufficiently large shows that $\{u_n\}_{n\in\mb{N}}$ converge in $L^m(\Omega)$.
\end{proof}

\begin{remark}\label{202211051625}
	Since the  embedding 
	\begin{equation*}
		H_0(\Omega) \hookrightarrow \mcH_0^1(\Omega)
	\end{equation*}
is continuous, we have that the embedding
	\begin{equation*}
	H_0(\Omega) \hookrightarrow L^2(\Omega)
\end{equation*}
is compact by invoking the Theorem \ref{202210121135}. 
\end{remark}

\begin{definition}\label{202301241118}
	The {\it bilinear form} $a(\cdot,\cdot)$ associated with the sub-elliptic operator $L$ is 
	\begin{equation*}
		a(u,v)=\iint_\Omega(y^{\alpha_1}\pt_{x}u)(y^{\alpha_1}\pt_{x}v)+\left(x^{\alpha_2}\pt_{y}u\right)\left(x^{\alpha_2}\pt_{y}v\right)+ Vuv dxdy
	\end{equation*}
	for $u,v\in H_0(\Omega)$. We call $u \in H_0(\Omega)$ is a {\it weak solution} for the problem \eqref{202301231543} if
	\begin{equation}\label{202210282232}
		a(u,v)=(f, v)_{L^2(\Omega)}
	\end{equation}
	for all $v\in H_0(\Omega)$.
\end{definition}

\begin{theorem}\label{202301161746}
The boundary value problem \eqref{202301231543} has a unique weak solution $u\in H_0(\Omega)$ for any given $f\in L^2(\Omega)$.
\end{theorem}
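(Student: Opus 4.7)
The natural approach is the Lax--Milgram theorem applied to the bilinear form $a(\cdot,\cdot)$ on the Hilbert space $H_0(\Omega)$ equipped with the equivalent inner product coming from Remark \ref{202209261124}, namely
\begin{equation*}
\|u\|_{H_0(\Omega)}^2 = \|y^{\alpha_1}\pt_x u\|_{L^2(\Omega)}^2 + \|x^{\alpha_2}\pt_y u\|_{L^2(\Omega)}^2 + \|V^{1/2}u\|_{L^2(\Omega)}^2.
\end{equation*}
First I would check that $H_0(\Omega)$ is indeed a Hilbert space: it is a closed subspace of $H(\Omega)$ (which was shown to be Hilbert in Lemma \ref{Lem.202210162141}), closedness being inherited from that of $\mcH_0^1(\Omega) \subset \mcH^1(\Omega)$ together with the closed graph argument used in Lemma \ref{Lem.202210162141} for the multiplication by $V^{1/2}$. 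With this equivalent norm at hand, the Poincar\'e inequality of Remark \ref{202209261124} is automatic.

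Next I would verify the three Lax--Milgram hypotheses. \emph{Boundedness of $a$:} two applications of Cauchy--Schwarz give, term by term,
\begin{equation*}
|a(u,v)| \le \|y^{\alpha_1}\pt_x u\|\,\|y^{\alpha_1}\pt_x v\| + \|x^{\alpha_2}\pt_y u\|\,\|x^{\alpha_2}\pt_y v\| + \|V^{1/2}u\|\,\|V^{1/2}v\| \le \|u\|_{H_0(\Omega)} \|v\|_{H_0(\Omega)}.
\end{equation*}
\emph{Coercivity:} directly from the definition, $a(u,u) = \|u\|_{H_0(\Omega)}^2$, so coercivity holds with constant $1$. \emph{Boundedness of the right-hand side:} the map $v \mapsto (f,v)_{L^2(\Omega)}$ is linear, and by Cauchy--Schwarz followed by the Poincar\'e inequality from Remark \ref{202209261124} (applied with $\delta=1/2$, valid since $\alpha_1,\alpha_2 \in (0,1/2)$, so $\min\{1/(\alpha_1+1),1/(\alpha_2+1)\} > 1/2$),
\begin{equation*}
|(f,v)_{L^2(\Omega)}| \le \|f\|_{L^2(\Omega)} \|v\|_{L^2(\Omega)} \le C\|f\|_{L^2(\Omega)} \|v\|_{H_0(\Omega)}.
\end{equation*}

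By the Lax--Milgram theorem there exists a unique $u \in H_0(\Omega)$ with $a(u,v)=(f,v)_{L^2(\Omega)}$ for every $v\in H_0(\Omega)$, which is exactly the weak solution in the sense of Definition \ref{202301241118}. The only mildly delicate point is the justification that $a$ is well defined on $H_0(\Omega) \times H_0(\Omega)$ when $V \in S_p$ with $1 < p < \infty$; this is exactly the condition $\iint_\Omega V u^2 \,dxdy < \infty$ built into the definition of $H_0(\Omega)$, so no integrability obstruction arises. Thus the only real work is confirming the equivalence of norms used above, after which Lax--Milgram does the rest in one line.
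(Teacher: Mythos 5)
Your proposal is correct and follows essentially the same route as the paper: both verify the Lax--Milgram hypotheses for $a(\cdot,\cdot)$ on $H_0(\Omega)$, with continuity of $v\mapsto(f,v)_{L^2(\Omega)}$ coming from the Poincar\'e inequality of Remark \ref{202209261124}. The only cosmetic difference is that you work with the equivalent norm without the $L^2$ term (coercivity constant $1$), while the paper keeps the original $\|\cdot\|_{H_0(\Omega)}$ norm and gets coercivity with constant $\tfrac{1}{2}$.
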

\begin{proof}
For any $v \in H_0(\Omega)$, we deduce that $f: H_0(\Omega)\ra \mb{R}$ is a linear continuous functional on $H_0(\Omega)$ based on the fact that
  \begin{equation*}
 (f,v)_{L^2(\Omega)}\le \|f\|_{L^2(\Omega)}\|v\|_{L^2(\Omega)} \le  \|f\|_{L^2(\Omega)}\|v\|_{H_0(\Omega)}.
 \end{equation*}	
Thanks to the H\"older inequality, we observe that
\begin{align*}
		a(u,v)&=\iint_\Omega(y^{\alpha_1}\pt_{x}u)(y^{\alpha_1}\pt_{x}v)+\left(x^{\alpha_2}\pt_{y}u\right)\left(x^{\alpha_2}\pt_{y}v\right)+ Vuv dxdy,\\
		&\le\|y^{\alpha_1}\pt_{x}u\|_{L^2(\Omega)}\|y^{\alpha_1}\pt_{x}v\|_{L^2(\Omega)}+\|x^{\alpha_2}\pt_{y}u\|_{L^2(\Omega)}\|x^{\alpha_2}\pt_{y}v\|_{L^2(\Omega)}+ \|V^\frac{1}{2}u\|_{L^2(\Omega)}\|V^\frac{1}{2}v\|_{L^2(\Omega)}\\
		&\le \left(\|y^{\alpha_1}\pt_{x}u\|_{L^2(\Omega)}^2+\|x^{\alpha_2}\pt_{y}u\|_{L^2(\Omega)}^2+ \|V^\frac{1}{2}u\|_{L^2(\Omega)}^2 \right)^\frac{1}{2}\\
		&\hspace{6mm}\cdot \left(\|y^{\alpha_1}\pt_{x}v\|_{L^2(\Omega)}^2+\|x^{\alpha_2}\pt_{y}v\|_{L^2(\Omega)}^2 +\|V^\frac{1}{2}v\|_{L^2(\Omega)}^2 \right)^\frac{1}{2}\\
		&\le C \|u\|_{H_0(\Omega)}\|v\|_{H_0(\Omega)}	
\end{align*}	
for any $u, v \in H_0(\Omega)$. Additionally, for any $u \in H_0(\Omega)$
\begin{equation}
a(u,u)=\iint_\Omega |y^{\alpha_1}\pt_{x}u|^2+|x^{\alpha_2}\pt_{y}u|^2+ |V^{\frac{1}{2}}u|^2dxdy \ge \frac{1}{2}\|u\|_{H_0(\Omega)}^2.
\end{equation}
The desired result is proved by employing the Lax Milgram theorem.
\end{proof}

\section{ Local properties of weak solutions}

We have established the existence and uniqueness of the solution of problem \eqref{202301231543} and explored the compact embedding theorem. The purpose of this section is to reveal that weak solutions of equation \eqref{202301231543} are locally bounded and locally H{\"o}lder continuous. To demonstrate this we will follow the technique by \cite{gilbarg2015elliptic,serrin1964local, zamboni2002holder}. In what follows, for $1<p< \infty$, let us denote by $q$ its conjugate $(\frac{1}{p}+ \frac{1}{q}=1)$, and consider 
\begin{equation}
	L=-\left(y^{2\alpha_1}\partial_{x}^2+x^{2\alpha_2}\partial_{y}^2\right)+V.
\end{equation}
For any $s_0=(x_0,y_0)\in \Omega$, for convenience, write  $B_r$ for $B_{r}(s_0)$.

\begin{theorem}\label{202210292227}
	Let $u \in H_0(\Omega)$ be a weak solution of $Lu=f$ defined in $\Omega$ as in Definition \ref{202301241118},  where $f \in L^2(\Omega)$ and $V(x)\in L^p(\Omega)$, $1<p<\infty$, $V(x) \ge 0$. For any $s_0=(x_0,y_0)\in \Omega$ with $B_{4r}\subset \subset \Omega$,  then there is a positive $C$, we have
\begin{equation}
	\sup\limits_{B_r} |u| \le  C (\|u\|_{L^2(B_{2r})}+\|f\|_{L^2(\Omega)}).
\end{equation}
\end{theorem}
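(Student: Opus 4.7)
The plan is to prove this local $L^\infty$ bound by a Moser iteration adapted to the sub-elliptic setting, with the weighted Sobolev embedding of Lemma \ref{202301160920} playing the role of the classical Sobolev inequality. Fix $\delta$ strictly less than $\min\{1/(\alpha_1+1),1/(\alpha_2+1)\}$ but large enough that the Sobolev gain $\chi := \delta/(1-\delta) > 1$ is large enough to dominate the H\"older conjugate exponent of $V$; such a choice is available because the upper bound on $\delta$ is strictly greater than $2/3$. The iteration will then produce geometrically improving $L^{\chi^k\gamma}$ estimates on shrinking balls, whose limit is the claimed sup bound.

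\textbf{Caccioppoli estimate.} For $r\le \rho<\rho'\le 2r$, fix a cutoff $\eta\in C_0^\infty(B_{\rho'})$ with $\eta\equiv1$ on $B_\rho$ and $|\pt_x\eta|+|\pt_y\eta|\le C/(\rho'-\rho)$. For $\beta\ge0$ and $L>0$ set $\bar u=\min(|u|,L)$, and test the weak formulation \eqref{202210282232} with $v=\eta^2\bar u^{2\beta}u$; by truncation plus approximation, $v\in H_0(\Omega)$ in view of Lemma \ref{202301232024}. Expanding $y^{\alpha_1}\pt_x v$ and $x^{\alpha_2}\pt_y v$ and absorbing the good $\eta^2\bar u^{2\beta}(|y^{\alpha_1}\pt_xu|^2+|x^{\alpha_2}\pt_yu|^2)$ terms on the right via Cauchy--Schwarz, I expect to obtain an estimate of the form
\begin{equation*}
\iint_{B_{\rho'}}\eta^2\bar u^{2\beta}\bigl(|y^{\alpha_1}\pt_xu|^2+|x^{\alpha_2}\pt_yu|^2\bigr)\,dxdy\le \frac{C(1+\beta)^2}{(\rho'-\rho)^2}\iint_{B_{\rho'}}|u|^{2\beta+2}\,dxdy+C(1+\beta)^2\iint_{B_{\rho'}}\eta^2\bigl(V|u|^{2\beta+2}+|f||u|^{2\beta+1}\bigr)\,dxdy,
\end{equation*}
after which $L\to\infty$ by monotone convergence.

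\textbf{Sobolev gain and iteration.} Setting $w=\eta|u|^{\beta+1}$, Lemma \ref{202301160920} (via Remark \ref{202209261124}) gives $\|w\|_{L^{2\chi}(\Omega)}^2\le C\iint(|y^{\alpha_1}\pt_xw|^2+|x^{\alpha_2}\pt_yw|^2)\,dxdy$. Combining this with the Caccioppoli estimate and handling the potential term by H\"older, $\iint\eta^2V|u|^{2\beta+2}\le\|V\|_{L^p}\|\eta|u|^{\beta+1}\|_{L^{2p/(p-1)}}^2$, the resulting $L^{2p/(p-1)}$ norm is absorbed into $\|w\|_{L^{2\chi}}$ by interpolation between $L^2$ and $L^{2\chi}$ once $\chi>p/(p-1)$; the $f$-term is controlled by Young's inequality at the cost of an extra $\|f\|_{L^2(\Omega)}^2$ and a term of order $\|u\|_{L^{4\beta+2}}$. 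With $\gamma=2\beta+2$ this yields the reverse-H\"older estimate
\begin{equation*}
\|u\|_{L^{\chi\gamma}(B_\rho)}\le\left(\frac{C(1+\gamma)}{\rho'-\rho}\right)^{2/\gamma}\bigl(\|u\|_{L^\gamma(B_{\rho'})}+\|f\|_{L^2(\Omega)}\bigr).
\end{equation*}
Iterating with $\gamma_k=2\chi^k$ and $\rho_k=r+r2^{-k}$, the product $\prod_k(C(1+\gamma_k))^{2/\gamma_k}$ converges because $\sum k\chi^{-k}<\infty$, and the limit gives the announced bound $\sup_{B_r}|u|\le C(\|u\|_{L^2(B_{2r})}+\|f\|_{L^2(\Omega)})$.

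\textbf{Main obstacle.} The coupling between the Sobolev gain $\chi$ (dictated by $\alpha_1,\alpha_2$) and the H\"older pairing for $V\in L^p$ is the delicate point: the naive absorption of $V|u|^{2\beta+2}$ needs $\chi\ge p/(p-1)$, which can fail when $\alpha_i$ is close to $1/2$ and $p$ is close to $1$. In that regime one has to either invoke a Stummel--Serrin-type splitting $V=V_1+V_2$ with $\|V_1\|_{L^p}$ small and $V_2\in L^\infty$, or start the iteration from a higher exponent produced by a single application of the weighted Sobolev embedding. A secondary technical care point is justifying that $v=\eta^2\bar u^{2\beta}u$ is a bona fide element of $H_0(\Omega)$; this rests on the truncation $\bar u=\min(|u|,L)$ combined with the density furnished by Lemma \ref{202301232024}.
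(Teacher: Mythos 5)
Your overall scheme (Moser iteration with a Caccioppoli estimate, H\"older on the potential term, absorption by interpolation, then iteration on shrinking balls) is the same as the paper's, but there is a genuine gap at exactly the point you flag as the ``main obstacle,'' and the remedies you sketch do not close it. By basing every iteration step on the weighted embedding of Lemma \ref{202301160920}, your Sobolev gain is capped: since $\delta<\min\{\tfrac{1}{\alpha_1+1},\tfrac{1}{\alpha_2+1}\}$, one has $\chi=\tfrac{\delta}{1-\delta}<\tfrac{1}{\max\{\alpha_1,\alpha_2\}}$, a finite number, whereas absorbing $\iint \eta^2 V|u|^{2\beta+2}\le \|V\|_{L^p}\|\eta|u|^{\beta+1}\|_{L^{2q}}^2$ by interpolation requires $q=\tfrac{p}{p-1}\le \chi$. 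For $p$ close to $1$ (more precisely for $p\le \tfrac{1}{1-\max\{\alpha_1,\alpha_2\}}$) no admissible $\delta$ exists, while the theorem is asserted for all $1<p<\infty$. Your first repair, splitting $V=V_1+V_2$ with $\|V_1\|_{L^p}$ small and $V_2\in L^\infty$, does not help: smallness of the constant is irrelevant when the norm $\|\eta|u|^{\beta+1}\|_{L^{2q}}$ with $2q>2\chi$ is simply not controlled by the available Sobolev norm at all. Your second repair, starting the iteration from a higher exponent, also fails, because the obstruction is per step --- each step must relate the $L^{2q}$ norm of $\eta|u|^{\beta+1}$ to its $L^{2}$ and $L^{2\chi}$ norms, and this is impossible whenever $q>\chi$, independently of the value of $\beta$.

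The paper's proof avoids the problem by exploiting locality rather than the global weighted embedding. Since the estimate is on $B_{4r}\subset\subset\Omega$ and the cutoff is supported in a set $W\subset\subset\Omega$, the weights satisfy $y^{2\alpha_1},x^{2\alpha_2}\ge \vartheta>0$ on $W$, so the Caccioppoli estimate controls the full (unweighted) gradient of $\eta F(w)$ and $\eta F(w)\in H_0^1(\Omega)$ in the classical sense. In two dimensions $H_0^1(\Omega)\hookrightarrow L^{2X}(\Omega)$ for every $X<\infty$, so the paper chooses $X>\max\{2,q\}$, and then both the $f$-term and the $V$-term are absorbed by the $\epsilon$-interpolation inequalities with exponents $\mu_1=\tfrac{X}{X-2}$ and $\mu_2=\tfrac{X(q-1)}{X-q}$ for every $p>1$; the weighted embedding of Lemma \ref{202301160920} is never needed in this theorem (it is used elsewhere, e.g.\ for the compact embedding). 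If you replace your fixed gain $\chi$ by this locally available, arbitrarily large exponent $X$, your argument goes through for the full range $1<p<\infty$; the remaining ingredients of your proposal (the truncation $\bar u=\min(|u|,L)$ with test function $\eta^2\bar u^{2\beta}u$ in place of the paper's Serrin-type functions $F$ and $G$, and Young's inequality for the $f$-term in place of the paper's normalization $h=\|f\|_{L^2(\Omega)}$) are acceptable cosmetic variants.
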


\begin{proof}
	Setting
	\begin{equation*}
		w=|u|+h,
	\end{equation*}
and 
\begin{equation}\label{202210281755}
	F(w)=
	\begin{cases}
		w^z, &h\le w \le l,\\
		zl^{z-1}w-(z-1)l^z,&l\le w,
	\end{cases}
\end{equation}
where $z\ge 1$, $l> h$, $h$	is a positive number that will be determined later.  Let us define the function 
\begin{equation}\label{202210281756}
	\begin{split}
	G(u)
	&={\rm sign}(u)\left(F(w)F'(w)-zh^{2z-1}\right)\\
	&=
	\begin{cases}
		{\rm sign}(u) \left(zw^{2z-1}-zh^{2z-1}\right), &h\leq w\leq l,\\
		{\rm sign}(u)\left(z^2l^{2z-2}w-z(z-1)l^{2z-1}-zh^{2z-1}\right), & w\geq l,
	\end{cases}
\end{split}
\end{equation}
it is easy to verify that
\begin{equation}\label{202210281757}
	G'=
	\begin{cases}
		(2-1/z)|F'|^2, &|u|< l-h,\\
		|F'|^2,        &|u|> l-h,
	\end{cases}
\end{equation}
and
	\begin{equation}\label{202210281758}
|G|\le FF', \quad wF'\le zF.
\end{equation}

For any open set $\Omega' \subset \subset \Omega$ and choose an open set $W$ such that $\Omega' \subset \subset W \subset \subset \Omega$. Consider the function $v=\eta^2G(u)$, where $\eta \in C_0^\infty(\mathbb{R}^2)$ and $0\le \eta \le 1$, $\eta \equiv 1$ on $\Omega'$, $\eta \equiv 0$ on $\mathbb{R}^2-W$. Since  $Dv=2\eta D\eta G(u)+ \eta^2 G'(u)Du$ for $|u| \neq l-h$, $Dv=2\eta D\eta G(u)$ and $Du=0$ a.e. \cite{serrin1964local} when $|u| = l-h$, thus $v \in H_0^1(\Omega)$ can be  obtained by \eqref{202210281755}, \eqref{202210281756}, \eqref{202210281757} and \eqref{202210281758}. Let $\Omega_1=\{x\in \Omega:|u|\le l-h\}$, $\Omega_2=\{x\in \Omega:|u|> l-h\}$, from \eqref{202210281758} we naturally know
\begin{equation*}
	\begin{split}
|G(u)| &\leq FF'\leq  zl^{2z-1} \	\mbox{in} \ \Omega_1,\\
|G(u)| &\le z^2l^{2z-2}|u|+z^2l^{2z-2}h-z(z-1)l^{2z-1} \	\mbox{in} \ \Omega_2.
	\end{split}	
\end{equation*}
Hence
\begin{equation*}
	\begin{split}
		\iint_{\Omega_1} Vv^2dxdy &\le (zl^{2z-1})^2\|V\|_{L^1(\Omega)}\le  (zl^{2z-1})^2\|V\|_{L^p(\Omega)},\\
		\iint_{\Omega_2} Vv^2dxdy&\le 2z^4l^{4z-4} \iint_{\Omega_2} V|u|^2dxdy+C(z,l,h)\|V\|_{L^1(\Omega)}\\
		&\le 2z^4l^{4z-4} \left\|V^\frac{1}{2}|u|\right\|_{L^2(\Omega)}+C(z,l,h)\|V\|_{L^p(\Omega)},
	\end{split}	
\end{equation*}
here $C(z,l,h)=2\left(z^2l^{2z-2}h-z(z-1)l^{2z-1}\right)^2$. These display that $v$ is a legitimate test function. Substituting $v$ in \eqref{202210282232} yields
\begin{equation}\label{202210282233}
\begin{split}
	&\iint_\Omega y^{2\alpha_1}\eta^2G'(u)\Big|\frac{\partial u}{\partial x}\Big|^2+x^{2\alpha_2}\eta^2G'(u)\Big|\frac{\partial u}{\partial y}\Big|^2+2y^{2\alpha_1}\eta G(u) \frac{\partial \eta}{\partial x} \frac{\partial u}{\partial x}+2x^{2\alpha_2}\eta G(u) \frac{\partial \eta}{\partial y} \frac{\partial u}{\partial y} dxdy\\
	&= \iint_\Omega \eta^2 f G(u)dxdy-\iint_\Omega\eta^2 VuG(u)dxdy.
\end{split}
\end{equation}
According to the definition of $\eta$ and \eqref{202210281757}, we have
\begin{equation}\label{202210290945}
	\begin{split}
		&\iint_\Omega y^{2\alpha_1}\eta^2G'(u)\Big|\frac{\partial u}{\partial x}\Big|^2+x^{2\alpha_2}\eta^2G'(u)\Big|\frac{\partial u}{\partial y}\Big|^2 dxdy\\
		&\ge \frac{1}{2}\min\limits_{W}\{x^{2\alpha_2},y^{2\alpha_1}\} \iint_{W} \eta^2G'(u)\Big|\frac{\partial u}{\partial x}\Big|^2+\eta^2G'(u)\Big|\frac{\partial u}{\partial y}\Big|^2 dxdy\\
		&=\frac{1}{2}\min\limits_{W}\{x^{2\alpha_2},y^{2\alpha_1}\} \iint_{\Omega} \eta^2G'(u)|Du|^2dxdy\\
		&\ge \frac{1}{2}\min\limits_{W}\{x^{2\alpha_2},y^{2\alpha_1}\} \iint_{\Omega} \eta^2 |F'|^2|Dw|^2dxdy.				
	\end{split}
\end{equation}
Let $\vartheta = \frac{1}{2}\min\limits_{W}\{x^{2\alpha_2},y^{2\alpha_1}\}$, utilizing \eqref{202210281758},  \eqref{202210282233},  \eqref{202210290945} and $DF(w)=F'Dw$, by the Young inequality we obtain 
\begin{equation*}
	\begin{split}
	&\iint_{\Omega} \eta^2 |DF(w)|^2 dxdy	\\ &\le 2\vartheta^{-1} \iint_\Omega\left( \eta|D\eta| |Dw||G(u)|+ \eta^2|f||G(u)|+ \eta^2 V |u||G(u)|   \right)dxdy  \\
	&\le 2\vartheta^{-1}\iint_\Omega \left(\eta |D\eta| F F' |Dw|+ zh^{-1} |f|\eta^2|F|^2+z \eta^2 V |F|^2\right)dxdy   \\
	&\le 2\vartheta^{-1} \iint_\Omega \left( \frac{\epsilon}{2} \eta^2  |F'|^2 |Dw|^2+\frac{1}{2\epsilon}|D\eta|^2|F|^2 + zh^{-1} |f|\eta^2|F|^2+z \eta^2 V|F|^2\right)dxdy,
	\end{split}
\end{equation*}
where the second inequality we have used  
\begin{equation*}
	|G(u)|\leq \frac{|u|+h}{h}|G(u)|=\frac{w}{h}|G(u)|\leq \frac{1}{h}FwF'\leq \frac{z}{h}|F|^2
\end{equation*}
and
\begin{equation*}
	|u||G(u)|\leq |u|FF'\leq FwF'\leq z|F|^2.
\end{equation*}
Selecting the  $\epsilon = \frac{\vartheta}{2}$, we know
\begin{equation}\label{202210291106}
		\iint_{\Omega} \eta^2 |DF(w)|^2 dxdy	 
		\le  C\iint_\Omega |D\eta|^2|F(w)|^2 + zh^{-1} |f|\eta^2|F(w)|^2+z \eta^2 |V||F(w)|^2  dxdy,
\end{equation}
here and in the following content in this proof, $C$ is independent on $z$. 
Since $\eta F(w) \in H_0^1(\Om)$, for any  $1\le 2X < \infty$, from Sobolev imbedding theorems and  inequality \eqref{202210291106} we obtain
 \begin{equation}\label{202210291127}
 		\begin{split}
 	\|\eta F(w)\|_{L^{2X}(\Omega)}^2 &\le C\|\eta F(w)\|_{H^1(\Omega)}^2\le C\iint_\Omega |D\eta|^2|F(w)|^2 +\eta^2 |DF(w)|^2 dxdy  \\
 	&\le C\iint_\Omega (z+1)|D\eta|^2|F(w)|^2 + zh^{-1} \eta^2|f||F(w)|^2+z \eta^2 |V||F(w)|^2 dxdy
        \end{split}
 \end{equation}
due to $z\geq 1$.  Moreover, we choose $\max\{2,q\}< X< \infty$, for any $\epsilon>0$ it follows that
\begin{equation*}
\begin{split}
\iint_\Omega \eta^2 |f||F|^2dxdy 
&\le \|f\|_{L^2(\Omega)} \|\eta F(w)\|_{L^4(\Omega)}^2\\
&\le \|f\|_{L^2(\Omega)} \left(\epsilon \|\eta F(w)\|_{L^{2X}(\Omega)}+ \epsilon^{-\mu_1}\|\eta F(w)\|_{L^2(\Omega)} \right)^2,\\
		\iint_\Omega \eta^2 V|F(w)|^2dxdy &\le \|V\|_{L^p(\Omega)} \|\eta F(w)\|_{L^{2q}(\Omega)}^2\\
		&\le \|V\|_{L^p(\Omega)} \left(\epsilon \|\eta F(w)\|_{L^{2X}(\Omega)}+ \epsilon^{-\mu_2}\|\eta F(w)\|_{L^2(\Omega)} \right)^2,
	\end{split}
\end{equation*}
where $\mu_1=\frac{X}{X-2}$, $\mu_2 =\frac{X(q-1)}{X-q}$.
Setting $h=\|f\|_{L^2(\Omega)}$ (if $f=0$, we may take $h>0$, eventually let $h \to 0$), and choosing the proper $\mu$ such that $\epsilon^{-2\mu} \ge \max\{\epsilon^{-2\mu_1}, \epsilon^{-2\mu_2}\} $, then the inequality \eqref{202210291127} becomes
\begin{equation}\label{202210291633}
		\|\eta F(w)\|_{L^{2X}(\Omega)}^2 
	\le C \iint_\Omega (z+1)|D\eta|^2|F(w)|^2      dxdy+ C\epsilon^2 z  \|\eta F(w)\|_{L^{2X}(\Omega)}^2 +C\epsilon^{-2\mu} z  \|\eta F(w)\|_{L^{2}(\Omega)}^2.
\end{equation}
 Now, choosing the appropriate $\epsilon$, in fact we may select $\epsilon^2= \frac{1}{2zC}$, thus
\begin{equation*}
	\begin{split}	
	\|\eta F(w)\|_{L^{2X}(\Omega)}^2 
	&\le C \iint_\Omega (z+1)|D\eta|^2|F(w)|^2 dxdy  +Cz^{\mu+1} \|\eta F(w)\|_{L^{2}(\Omega)}^2\\
	&\le C(z+1)^{2(\mu+1)} \iint_\Omega (|D\eta|+\eta)^2   |F(w)|^2dxdy,
\end{split}
\end{equation*}
that is, 
\begin{equation}\label{202302171218}	
		\|\eta F(w)\|_{L^{2X}(\Omega)}
		\le C(z+1)^{\mu+1}   \|(|D\eta|+\eta)F(w)\|_{L^2(\Omega)}.
\end{equation}

It is now desirable to specify the cut-off function $\eta$ more precisely. Let $r_1$ and $r_2$ be such that $r\le r_1 < r_2 \le 3r$, selecting $\eta$ in such a way that $0 \le \eta \le 1$, $\eta\big|_{B_{r_1}}=1$ and $\eta \big|_{\mathbb{R}^2-B_{r_2}}=0$, $|D\eta|\le \frac{2}{r_2-r_1}$. By \eqref{202302171218},
\begin{equation}\label{202210291805}
	\| F(w)\|_{L^{2X}(B_{r_1})}
	\le C \frac{(z+1)^{\mu+1}}{r_2-r_1}  \|F\|_{L^2(B_{r_2})}.
\end{equation}
Setting $\Phi(p,r)=\left(\iint_{B_r}|w|^pdxdy\right)^\frac{1}{p}$, letting $l \to \infty$, by \eqref{202210291805}, then  
\begin{equation*}
	\Phi(2zX,r_1) \le \left( C \frac{(z+1)^{\mu+1}}{r_2-r_1}  \right)^\frac{1}{z}\Phi(2z,r_2)\le \left( C \frac{(2z)^{\mu+1}}{r_2-r_1}  \right)^\frac{1}{z}\Phi(2z,r_2).
\end{equation*}
Taking $\theta=2z$, then the above inequality is transformed into
\begin{equation}\label{202210292045}
	\Phi(\theta X,r_1) \le  \left( C \frac{\theta^{\mu+1}}{r_2-r_1}  \right)^\frac{2}{\theta}\Phi(\theta,r_2).
\end{equation}
This inequality can now be iterated to receive the desired results. Indeed, we may choose $\theta=\theta^m=2X^m$, and $r^m=r+\frac{r}{2^m}$, $m=0,1,2 \cdots$, the iteration yields 
\begin{equation}\label{202210292111}
	\Phi(2X^m,r^m)\le (CX)^{2(\mu+1) \Sigma mX^{-m}}\Phi(p,2r)\le C\Phi(2,2r).
\end{equation} 
Consequently, letting $m \to \infty$, we obtain
\begin{equation}\label{202210292116}
	\sup\limits_{B_r}w \le  C \|w\|_{L^2(B_{2r})}.
\end{equation}
Following the definition of $w$, we can see that
\begin{equation}\label{202210292120}
	\sup\limits_{B_r} |u| \le  C (\|u\|_{L^2(B_{2r})}+\|f\|_{L^2(\Omega)}).
\end{equation}
\end{proof}

\begin{theorem}\label{202210292207}
	Let $u \ge 0$, $u \in H_0(\Omega)$ be a weak solution of $Lu-cu=0$ defined in $\Omega$ as in Definition \ref{202301241118},
  $s_0=(x_0,y_0) \in \Omega$, where $f \in L^2(\Omega)$ and $V(x)\in L^p(\Omega)$, $1<p<\infty$, $V(x) \ge 0$, $c$ is a constant. Then for any $B_{4r} \subset \subset \Omega$,  there is a positive $C$ such that
	\begin{equation}
		\sup\limits_{B_r} u \le  C \inf\limits_{B_r} u .
	\end{equation}
\end{theorem}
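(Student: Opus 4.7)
The plan is to prove this Harnack inequality by the classical Moser iteration scheme, in the same spirit as Theorem \ref{202210292227}, but now carried out with \emph{both} positive and negative powers of $u$ and then bridged by a John--Nirenberg type estimate on $\log u$. The crucial observation that makes everything work on $B_{4r}\subset\subset\Omega$ is that both $x$ and $y$ are bounded away from $0$ on the compact ball, so $\min_{B_{4r}}\{x^{2\alpha_2},y^{2\alpha_1}\}\ge \vartheta_0>0$; within this ball the operator $L-c$ is therefore uniformly elliptic (up to smooth positive weights), with lower-order coefficient $V-c\in L^p(\Omega)$. To avoid integrability issues at the zero set of $u$, I would first replace $u$ by $u+\varepsilon$, carry out all estimates with $C$ independent of $\varepsilon$, and then let $\varepsilon\to 0^+$ at the end.

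First I would establish a one-sided supremum estimate $\sup_{B_r}u\le C\,\Phi(p_0,2r)$ for any fixed small $p_0>0$, where $\Phi(p,\rho)=(\iint_{B_\rho}u^p)^{1/p}$. The proof runs exactly as in Theorem \ref{202210292227}: insert the test function $v=\eta^2 F(u)F'(u)$ with $F(u)=u^z$ (truncated at level $l$), apply the Sobolev embedding from Lemma \ref{202301160920}, and absorb the $V$-contribution and the $f$-contribution (now $f\equiv cu$, which is absorbed into the $V$-term since $c$ is a constant). Next I would run the analogous iteration for negative exponents by testing against $v=\eta^2 u^{-q}$ for $q>0$: the Caccioppoli-type identity now produces a lower bound $\Phi(-p_0,2r)\le C\inf_{B_r}u$ after Moser iteration to $-\infty$. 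Again, the $V\ge 0$ term has a favorable sign in this regime, while the constant $c$ contributes a bounded perturbation that is absorbed by H\"older and Sobolev.

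The bridge between positive and negative exponents is the classical step: test the weak equation against $v=\eta^2/u$ to obtain a Caccioppoli estimate for $\log u$, namely
\begin{equation*}
\iint_{B_{2r}} \eta^2\bigl(y^{2\alpha_1}|\partial_x\log u|^2 + x^{2\alpha_2}|\partial_y\log u|^2\bigr)\,dxdy \le C\iint_{B_{2r}}\bigl(|D\eta|^2 + \eta^2(V+|c|)\bigr)dxdy.
\end{equation*}
Together with the uniform ellipticity on $B_{4r}$, this says $\log u\in BMO(B_{2r})$ with a controlled norm; by the John--Nirenberg inequality there exists $p_0>0$ such that $\Phi(p_0,2r)\cdot\Phi(-p_0,2r)\le C$. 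Combining this with the two iteration estimates yields $\sup_{B_r}u\le C\Phi(p_0,2r)\le C\,\Phi(-p_0,2r)\le C\inf_{B_r}u$, as desired.

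I expect the main technical obstacle to be handling the $L^p$ potential $V$ cleanly in the negative-exponent iteration, since the interpolation trick used for positive exponents in \eqref{202210291633} must be re-run with the sign conventions reversed and with careful tracking of the exponents $\mu_1,\mu_2$ dictated by H\"older's inequality at the critical Sobolev exponent. A secondary but purely bookkeeping difficulty is checking that $v=\eta^2 u^{-q}$ actually lies in $H_0(\Omega)$ after the $\varepsilon$-regularization, which is where strict positivity of $u+\varepsilon$ is used. Once these estimates are uniform in $\varepsilon$, the Harnack inequality follows by the standard limiting argument.
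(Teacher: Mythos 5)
Your proposal is correct and follows essentially the same route as the paper's own proof: regularize with $u+\varepsilon$ (the paper's $\bar u=u+k$), run Moser iteration with the test functions $\eta^2\bar u^{\beta}$ for positive and negative exponents while handling $V\in L^p$ and the constant $c$ by H\"older, Sobolev embedding and interpolation, and bridge the two ranges through the $\beta=-1$ (i.e.\ $\log\bar u$) Caccioppoli estimate combined with a John--Nirenberg type exponential integrability result (the paper invokes Theorem 7.21 of \cite{gilbarg2015elliptic}), using that $x^{2\alpha_2},y^{2\alpha_1}$ are bounded below on $B_{4r}\subset\subset\Omega$. The only cosmetic difference is that the paper works directly with $\bar u^{\beta}$ (local boundedness from Theorem \ref{202210292227} makes the truncation $F$ unnecessary), so no gap remains.
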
	

\begin{proof}
From the Section 2 we know that $u \in H_{\rm loc}^1(\Omega)$ and $\iint_\Omega Vu^2dx< \infty$, and $u \in L^\infty (B_{4r})$ by Theorem \ref{202210292227}.	Define 
\begin{equation*}
	\bar{u}=u+k, 
\end{equation*}
where we may choose arbitrary $k>0$ and eventually let $k \to 0$.   Consider the function $v=\eta^2\bar{u}^\beta$, where $\eta \in C_0^\infty(\mathbb{R}^2)$ and $0\le \eta \le 1$, $\supp \eta\subset B_{3r}(s_0)$ and $\beta\in\mathbb{R}$. Through simple verification, we can get $v$ is a valid test function.

Since $Dv=2\eta D\eta \bar{u}^\beta+ \beta \eta^2 \bar{u}^{\beta-1}Du$, by substitution into \eqref{202210282232} for $\beta \neq 0$, we find that
\begin{equation*}
	\begin{split}
		&\iint_{\Omega} \eta^2 y^{2\alpha_1} \bar{u}^{\beta-1} \left|\frac{\pt{u}}{\pt{x}}\right|^2+\eta^2 x^{2\alpha_2} \bar{u}^{\beta-1} \left|\frac{\pt{u}}{\pt{y}}\right|^2dxdy	\\
		&= \frac{1}{\beta}\iint_{\Omega} cuv-Vuvdxdy- \frac{2}{\beta} \iint_{\Omega} \eta y^{2\alpha_1} \frac{\pt{\eta}}{\pt{x}}  \frac{\pt{u}}{\pt{x}} \bar{u}^\beta-\eta x^{2\alpha_2} \frac{\pt{\eta}}{\pt{y}}  \frac{\pt{u}}{\pt{y}} \bar{u}^\beta dxdy\\
		&\le  \frac{1}{|\beta|} \iint_{\Omega} (|c|+ V) \eta^2 \bar{u}^{\beta+1}dxdy
		+ \frac{\epsilon}{|\beta|}\iint_{\Omega} \eta^2 y^{2\alpha_1} \left|\frac{\pt{u}}{\pt{x}} \right|^2 \bar{u}^{\beta-1}
		+\eta^2 x^{2\alpha_2} \left|\frac{\pt{u}}{\pt{y}} \right|^2 \bar{u}^{\beta-1}dxdy\\
		&\hspace{6mm}+ \frac{1}{|\beta|\epsilon}\iint_{\Omega} y^{2\alpha_1} \left|\frac{\pt{\eta}}{\pt{x}} \right|^2 \bar{u}^{\beta+1}+x^{2\alpha_2}\left|\frac{\pt{\eta}}{\pt{y}} \right|^2 \bar{u}^{\beta+1}dxdy.
	\end{split}
\end{equation*}
Selecting $\epsilon =\min\{1,\frac{|\beta|}{4}\}$ and utilizing the feature of $\eta$, we have
\begin{equation}\label{202210301844}
	\begin{split}
		&\iint_{\Omega} \eta^2 y^{2\alpha_1} \bar{u}^{\beta-1} \left|\frac{\pt{u}}{\pt{x}}\right|^2+\eta^2 x^{2\alpha_2} \bar{u}^{\beta-1} \left|\frac{\pt{u}}{\pt{y}}\right|^2dxdy	\\
		&\le  C(|\beta|) \iint_{\Omega} |D\eta|^2\bar{u}^{\beta+1}+ (|c|+V)\eta^2 \bar{u}^{\beta+1}dxdy,	
	\end{split}
\end{equation}
and as in   \eqref{202210290945} we find that
\begin{equation}\label{202210302141}
	\iint_{\Omega} \eta^2 y^{2\alpha_1} \bar{u}^{\beta-1} \left|\frac{\pt{u}}{\pt{x}}\right|^2+\eta^2 x^{2\alpha_2} \bar{u}^{\beta-1} \left|\frac{\pt{u}}{\pt{y}}\right|^2dxdy \ge C \iint_{\Omega} \eta^2 \bar{u}^{\beta-1}|Du|^2dxdy.
\end{equation}
Combining \eqref{202210301844} and \eqref{202210302141} yields
\begin{equation}\label{202210302146}
	\iint_{\Omega} \eta^2 \bar{u}^{\beta-1}|Du|^2dxdy \le C(|\beta|) \iint_{\Omega} |D\eta|^2\bar{u}^{\beta+1}+ (|c|+ V) \eta^2 \bar{u}^{\beta+1}dxdy.
\end{equation}
To proceed further, setting
\begin{equation*}
	\Theta=
	\begin{cases}
		\bar{u}^\frac{\beta+1}{2}, &\beta \neq -1,\\
		\log{\bar{u}}, &\beta =-1,
	\end{cases}
\end{equation*}
we may rewrite \eqref{202210302146} by
\begin{equation}\label{202211011703}
	\begin{cases}
		\iint_{\Omega}|\eta D\Theta|^2dxdy \le C(|\beta|)(\beta+1)^2 \iint_{\Omega}\left( |D\eta|^2\bar{u}^{\beta+1}+(|c|+ V)\eta^2 \bar{u}^{\beta+1}\right)dxdy, &\beta \neq -1,\\
		\iint_{\Omega} |\eta D\Theta|^2dxdy \le C\iint_{\Omega}\left( |D\eta|^2+(|c|+ V)\eta^2\right) dxdy, &\beta =-1.
	\end{cases}
\end{equation}

We may apply Sobolev  imbedding theorems to obtain for any  $1\le 2X < \infty$
\begin{equation}\label{202210302209}
	\|\eta \Theta\|_{L^{2X}(\Omega)}^2 \le C\|\eta \Theta\|_{H_0^1(\Omega)}^2
	\le C\iint_{\Omega} \left(|D\eta|^2|\Theta|^2 +|\eta D\Theta|^2\right)dxdy.  		
\end{equation}
Consider $q< X <\infty$, we obtain
\begin{equation*}
	\begin{split}
		\iint_{\Omega} \eta^2 (|c|+ V)|\Theta|^2dxdy 
		&\le \|(|c|+ V)\|_{L^p(\Omega)} \|\eta \Theta\|_{L^{2q}(\Omega)}^2\\
		&\le \|(|c|+ V)\|_{L^p(\Omega)} \left(\epsilon \|\eta \Theta\|_{L^{2X}(\Omega)}+ \epsilon^{-\mu}\|\eta \Theta\|_{L^2(\Omega)} \right)^2,
	\end{split}
\end{equation*}
where  $\mu =\frac{X(q-1)}{X-q}$. Letting  $\gamma=\beta +1$, by \eqref{202210302209}  it then follows that 
\begin{equation*}
	\|\eta\Theta\|_{L^{2X}(\Omega)}^2 \le C\left((\gamma^2+1)\|D\eta\Theta\|_{L^2(\Omega)}^2+\gamma^2\epsilon^2\|\eta\Theta\|_{L^{2X}(\Omega)}^2+\gamma^2 \epsilon^{-2\mu} \|\eta \Theta\|_{L^2(\Omega)}^2      \right),
\end{equation*}
where $C=C(|\beta|)$ is bounded when $|\beta|$ is bounded away from zero.  Furthermore, choosing a suitable $\epsilon$, we then get
\begin{equation}\label{202211011041}
	\|\eta\Theta\|_{L^{2X}(\Omega)} \le C(|\gamma|+1)^{\mu+1}\|(D\eta+\eta)\Theta\|_{L^2(\Omega)}.
\end{equation}

Now, the more accurate cut-off function $\eta$ will be given, let $r \le r_1 <r_2 \le 3r$ such that $\eta|_{B_{r_1}(s_0)}\equiv 1$, $\eta|_{\mathbb{R}-B_{r_2}(s_0)}\equiv 0$ with $|D\eta|\le \frac{2}{r_2-r_1}$. Hence,  in the light of \eqref{202211011041}, 
\begin{equation}\label{202211011044}
	\|\Theta\|_{L^{2X}(B_{r_1}(s_0))} \le C \frac{(|\gamma|+1)^{\mu+1}}{r_2-r_1} \|\Theta\|_{L^2(B_{r_2}(s_0))}.
\end{equation}

For any $B_{4r}(s_0) \subset \Omega$ and $p \neq 0$,  now we introduce 
\begin{equation*}
	\Psi(p,r)=\left(\iint_{B_r(s_0)}|\bar{u}|^pdxdy  \right)^\frac{1}{p},
\end{equation*}
in fact,
\begin{equation*}
	\begin{split}
		\Psi(\infty,r)&= \lim\limits_{p \to \infty} \Psi(p,r)=\sup\limits_{B_r(s_0)} \bar{u},\\
		\Psi(-\infty,r)&= \lim\limits_{p \to -\infty} \Psi(p,r)=\inf\limits_{B_r(s_0)} \bar{u}.
	\end{split}
\end{equation*}	
From \eqref{202211011044},
\begin{equation}\label{202211011156}
	\begin{cases}
		\Psi(\gamma X,r_1) \le \left( \frac{(C|\gamma|+1)^{\mu+1}}{r_2-r_1} \right)^{2/|\gamma|}\Psi(\gamma,r_2),&\gamma>0,\\[1mm]
		\Psi(\gamma ,r_2) \le \left( \frac{(C|\gamma|+1)^{\mu+1}}{r_2-r_1} \right)^{2/|\gamma|}\Psi(\gamma X,r_1),&\gamma<0.		
	\end{cases}
\end{equation}
When $\beta>0$, we have $\gamma>1$, taking $p>1$, $\gamma=\gamma^m=X^mp$ and  $r^m=r+\frac{r}{2^m}$, $m=0,1,\cdots$, consequently, by inequality \eqref{202211011156},
\begin{equation*}
	\Psi(X^mp,r^m)\le (CX)^{2(\mu+1) \Sigma mX^{-m}}\Psi(p,2r)\le C\Psi(p,2r),
\end{equation*}    
letting $m \to \infty$, 
\begin{equation}\label{202211031642}
	\sup_{B_r}\bar{u}\le C\|\bar{u}\|_{L^p(B_{2r})}.
\end{equation}

For $\beta<0$, then $\gamma<1$, in a similar manner, we can prove that for any $p_0, p$ such that $0<p_0<p<X$,
\begin{equation}\label{202211011641}
	\begin{cases}
		\Psi(p,2r) \le C\Psi(p_0,3r),\\
		\Psi(-p_0,3r) \le C\Psi(-\infty,r).
	\end{cases}
\end{equation}
In reality, we only require to  prove that 
\begin{equation}\label{202211011652}
	\Psi(p_0,3r) \le C\Psi(-p_0,3r)
\end{equation}
for some $p_0$. Now, to verify \eqref{202211011652}, we turn to the second estimate of \eqref{202211011703}.

Choosing $\eta$ in such a way that $\eta |_{B_d}(s_0) \equiv 1$, $\supp \eta \subset B_{2d}(s_0) \subset B_{4r}(s_0)$, and $|D\eta| \le C$, where $B_{d}(s_0)$ is an arbitrary open ball contained in $B_{2r}(s_0)$. Thanks to H\"older inequality,  from \eqref{202211011703} we then have
\begin{equation*}
	\iint_{B_d(z_0)} |D\Theta|dxdy \le Cd,
\end{equation*}
by invoking the Theorem 7.21 \cite{gilbarg2015elliptic}, there exists a constant $p_0>0$ such that for
\begin{equation*}
	\Theta_0=\frac{1}{|B_{3r(s_0)}|} \iint_{B_{3r(s_0)}} \Theta dxdy,
\end{equation*}
we have
\begin{equation*}
	\iint_{B_{3r}(s_0)} e^{p_0 |\Theta-\Theta_0|} dxdy \le C.
\end{equation*}
Linking with the definition of $\Theta$, the  desired result \eqref{202211011652} is obtained. Combining \eqref{202211011641} and \eqref{202211011652}, we know that
\begin{equation}\label{202212052235}
	\|\bar{u}\|_{L^p(B_{2r}(s_0))} \le C \inf_{B_r(s_0)} \bar{u}.
\end{equation}
Now, recall the inequalities \eqref{202211031642} and \eqref{202212052235}, that imply
\begin{equation}\label{202212052240}
	\sup_{B_r} \bar{u} \le C \inf_{B_r} \bar{u}.
\end{equation}
\end{proof}

\begin{lemma}\label{202301111641}
  Let $s=(x,y)$, $D_i^lu(s)=\frac{u(s+le_i)-u(s)}{l} \ (i=1,2)$ denote the $i\raisebox{0mm}{-}th$ difference quotient of size $l$ for $s\in \Omega'$, $l\in \mathbb{R}$, $0<|l|<\dist(\Omega',\partial{\Omega})$, $\Omega' \subset\subset \Omega$. Suppose $u\in \mcH_0^1(\Omega)$, then for any $\Omega' \subset\subset \Omega$, we have 
	\begin{equation*}
		\|D^lu\|_{L^2(\Omega')}^2	\le \iint_\Om y^{2\alpha_1}|\partial_{x}u|^2+ x^{2\alpha_2}|\partial_{y}u|^2dxdy \le 
		C\|u\|_{\mcH_0^1(\Omega)}^2
	\end{equation*}	
	for some constant $C$ and all $0<|l|<\frac{1}{2}\dist(\Omega',\partial{\Omega})$, where $D^lu=(D_1^lu,D_2^lu)$.
\end{lemma}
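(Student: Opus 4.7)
The plan is to split the chain of inequalities into two independent estimates. The right-hand inequality is immediate: by Remark~\ref{202209261124}, the quantity $\|y^{\alpha_1}\partial_x u\|_{L^2(\Omega)}^2 + \|x^{\alpha_2}\partial_y u\|_{L^2(\Omega)}^2$ is (equivalent to) the square of the canonical norm on $\mcH_0^1(\Omega)$, so the middle expression is controlled by a constant multiple of $\|u\|_{\mcH_0^1(\Omega)}^2$.

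For the left-hand inequality, I would first establish it for $u \in C_0^\infty(\Omega)$ and then extend to $u \in \mcH_0^1(\Omega)$ by density along an approximating sequence, since both sides depend continuously on $u$ in the relevant norms. Starting from the integral representation
\begin{equation*}
D_1^l u(x,y) \;=\; \frac{1}{l}\int_0^l \partial_x u(x+t,y)\,dt,
\end{equation*}
Cauchy--Schwarz gives the pointwise bound $|D_1^l u(x,y)|^2 \le \tfrac{1}{l}\int_0^l |\partial_x u(x+t,y)|^2\,dt$. Integrating over $\Omega'$, swapping the order of integration by Fubini, and performing the translation $x'=x+t$ yields an integral over $\Omega'+te_1$. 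The hypothesis $|l|<\tfrac12\dist(\Omega',\partial\Omega)$ ensures $\Omega'+te_1\subset\Omega$ for all $|t|\le|l|$, so the resulting quantity is majorized by an integral over all of $\Omega$. An analogous argument treats $D_2^l u$ via shifts in the $y$-direction.

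The decisive structural observation is that the weight $y^{2\alpha_1}$ is independent of $x$ (and $x^{2\alpha_2}$ independent of $y$), so the $x$-shift does not disturb it. Combined with the fact that $\Omega'\subset\subset(0,1)^2$ is bounded away from both coordinate axes, the factor $y^{-2\alpha_1}$ (respectively $x^{-2\alpha_2}$) is bounded on $\Omega'$, which lets us insert the weight and compare $\int_{\Omega'+te_1}|\partial_x u|^2\,dxdy$ with $\int_\Omega y^{2\alpha_1}|\partial_x u|^2\,dxdy$ up to a constant depending on $\dist(\Omega',\partial\Omega)$.

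The main obstacle is really only bookkeeping: the constant produced by the preceding step depends on the lower bound of the weights on $\Omega'$, i.e.\ on $\dist(\Omega',\partial\Omega)$, and must be absorbed into the overall $C$ appearing in the chain. There is no genuine analytic difficulty beyond this because the tensor-product nature of the weights (each depending on only one coordinate, the one transverse to the direction of differentiation) allows the standard unweighted difference-quotient argument to carry over essentially verbatim, with the weight playing a passive role.
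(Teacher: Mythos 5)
Your proposal is correct and follows essentially the same route as the paper's proof: the integral representation of the difference quotient, Cauchy--Schwarz, a shift in the coordinate direction transverse to each weight (which the weight therefore does not see), comparison with the weighted integral over all of $\Omega$ using that $\Omega'$ is bounded away from the degenerate part of the boundary, and a final density argument from $C_0^\infty(\Omega)$. The only cosmetic difference is that the paper inserts the weights on the left-hand side from the start via the lower bound $\tfrac12\min_{\Omega'}\{x^{2\alpha_2},y^{2\alpha_1}\}$ before shifting, whereas you shift the unweighted quantity first and then reinstate the weight through the boundedness of $y^{-2\alpha_1}$ and $x^{-2\alpha_2}$ on the shifted sets; both give the estimate with a constant depending on $\Omega'$, exactly as in the paper's own derivation.
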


\begin{proof}
 Suppose $u\in C_0^\infty(\Omega)$, for any $s\in \Omega'$, $i=1,2$, $0<|l|<\frac{1}{2}\dist(\Omega',\partial{\Omega})$, we see that
	\begin{equation*}
		u(x+l,y)-u(x,y)=l\int_0^1 \partial_{x}u(x+tl,y)dt,
	\end{equation*}
	so that 
	\begin{equation*}
		|u(x+l,y)-u(x,y)|\le |l|\int_0^1 |\partial_{x}u(x+l,y)|dt, 
	\end{equation*}
	i.e.
	\begin{equation*}
		|D_1^lu(x,y)|\le \int_0^1 |\partial_{x}u(x+tl,y)|dt.	
	\end{equation*}
Similarly, we have $|D_2^lu(x,y)|\le \int_0^1 |\partial_{y}u(x,y+tl)|dt$. Hence, for any 
$\Omega' \subset\subset \Omega$, by Cauchy inequality, we obtain that 
	\begin{equation*}
		\begin{split}
			c_1\iint_{\Omega'} |D^lu(x)|^2dxdy 
			&\le  \iint_{\Om'}\left( y^{2\alpha_1}|D^l_1u(x)|^2+x^{2\alpha_2}|D^l_2u(x)|^2\right)dxdy \\
			& \leq \iint_{\Om'}y^{2\alpha_1}\left(\int_0^1|\partial_{x}u(x+tl,y)|dt \right)^2+x^{2\alpha_2}\left(\int_0^1|\partial_{y}u(x,y+tl)|dt \right)^2 dxdy  \\			
			&\le  \int_0^1\iint_{\Om'}\left(y^{2\alpha_1} |\partial_{x}u(x_1+tl,y)|^2+x^{2\alpha_2}|\partial_{y}u(x, y+tl)|^2 \right)dxdydt\\
			&\le  \iint_\Om y^{2\alpha_1}|\partial_{x}u(x,y)|^2+ x^{2\alpha_2}|\partial_{y}u(x,y)|^2dxdy,
		\end{split}
	\end{equation*}
where $c_1=\frac{1}{2}\min\limits_{\Omega'}\{x^{2\alpha_2},y^{2\alpha_1}\}>0$, so that 
\begin{equation*}
\iint_{\Omega'} |D^lu(x)|^2dxdy \le C \left( \iint_\Om y^{2\alpha_1}|\partial_{x}u|^2+ x^{2\alpha_2}|\partial_{y}u|^2dxdy \right).
\end{equation*}
Since $C_0^\infty(\Omega)$ is dense in $\mcH_0^1(\Omega)$, then the above inequality is established for any $u\in \mcH_0^1(\Omega)$.
\end{proof}

\begin{theorem}\label{201301111756}
	Suppose that $u\in H_0(\Omega)$ is the weak solution for the problem \eqref{202301231543} on $\Omega$, where $V \in L^p(\Omega)$, $2 \le p<\infty$, $f \in L^2(\Omega)$, then $u\in H_{\rm loc}^2(\Omega)$. Furthermore,
	$u \in C^{0,\alpha}(\overline{\Omega'})$, where $\alpha \in (0,1)$, $\Omega'\subset\subset \Omega$.
\end{theorem}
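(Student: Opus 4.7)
My plan is to combine the interior $L^\infty$ bound already established with the uniform ellipticity on compactly contained subdomains, and then to extract $H^2_{\rm loc}$ regularity via the difference-quotient technique and deduce H\"older continuity by Sobolev embedding.

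The key observation is that for any $\Omega'\subset\subset\Omega$ there exists $\delta>0$ such that $x,y\ge\delta$ on $\Omega'$, so the coefficients $y^{2\alpha_1},x^{2\alpha_2}$ lie between two positive constants and are smooth. Thus, on such $\Omega'$, the operator $L_0=-(y^{2\alpha_1}\partial_x^2+x^{2\alpha_2}\partial_y^2)$ is a bona fide uniformly elliptic operator with smooth coefficients. First I would pick nested subdomains $\Omega'\subset\subset\Omega''\subset\subset\Omega$ and invoke Theorem \ref{202210292227} to conclude $u\in L^\infty(\Omega'')$. Rewriting the equation as
\begin{equation*}
L_0 u = f - Vu =: g\quad\text{in } \Omega'',
\end{equation*}
we have $g\in L^2(\Omega'')$ because $V\in L^p(\Omega)$ with $p\ge 2$, $u\in L^\infty(\Omega'')$, and $f\in L^2(\Omega)$; in particular $\|Vu\|_{L^2(\Omega'')}\le\|u\|_{L^\infty(\Omega'')}\|V\|_{L^2(\Omega'')}$.

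Next I would carry out the standard difference-quotient argument using Lemma \ref{202301111641} as the main analytic tool. Choose a cutoff $\eta\in C_0^\infty(\Omega'')$ with $\eta\equiv 1$ on $\Omega'$, and test the weak formulation \eqref{202210282232} with $v=-D_i^{-l}(\eta^2 D_i^l u)$ for $i=1,2$ and $|l|$ small. After moving the inner difference quotient across the bilinear form and using Cauchy's inequality to absorb top-order terms, one obtains
\begin{equation*}
\iint_{\Omega'} \left(y^{2\alpha_1}|D_i^l\partial_x u|^2+x^{2\alpha_2}|D_i^l\partial_y u|^2\right)dxdy \le C\bigl(\|u\|_{\mcH^1(\Omega'')}^2+\|g\|_{L^2(\Omega'')}^2\bigr),
\end{equation*}
where $C$ depends on $\delta,\alpha_1,\alpha_2$ and $\eta$ only. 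Because $y^{2\alpha_1},x^{2\alpha_2}\ge \delta'>0$ on $\Omega'$, this furnishes a uniform-in-$l$ bound on $\|D^l\nabla u\|_{L^2(\Omega')}$, which by the standard characterisation of Sobolev spaces yields $\nabla u\in H^1(\Omega')$, i.e.\ $u\in H^2(\Omega')$. Since $\Omega'$ was arbitrary, $u\in H^2_{\rm loc}(\Omega)$.

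For the H\"older statement, choose $\Omega'\subset\subset\Omega''\subset\subset\Omega$ and apply the previous step on $\Omega''$. In dimension two the Sobolev embedding gives $H^2(\Omega'')\hookrightarrow W^{1,q}(\Omega'')$ for every $q<\infty$, and then $W^{1,q}(\Omega'')\hookrightarrow C^{0,1-2/q}(\overline{\Omega'})$ for $q>2$; choosing $q$ sufficiently large yields $u\in C^{0,\alpha}(\overline{\Omega'})$ for any prescribed $\alpha\in(0,1)$.

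The main obstacle I anticipate is the rigorous execution of the difference-quotient computation while keeping the potential term under control: the bilinear form contains the non-smooth weight $V$, so when we apply $D_i^{-l}$ to the term $\iint Vuv\,dxdy$ we cannot translate $V$ directly. One must instead write this term as $\iint V u\,D_i^{-l}(\eta^2 D_i^l u)\,dxdy$, estimate by H\"older with exponents $(p,q)$ where $q$ is dual to $p$, and use Lemma \ref{202301160920} together with $u\in L^\infty(\Omega'')$ to bound $\|V u\|_{L^2(\Omega'')}\le\|u\|_{L^\infty}\|V\|_{L^2}$, so that the potential term is effectively absorbed into the source $g$ rather than fought within the difference-quotient machinery. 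Once this reduction is made, everything proceeds as in the classical uniformly elliptic theory on $\Omega''$.
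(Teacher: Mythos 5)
Your proposal follows essentially the same route as the paper's own proof: pick nested subdomains, use Theorem \ref{202210292227} for the interior $L^\infty$ bound, push the potential term to the source side and bound $\|Vu\|_{L^2}$ by $\|u\|_{L^\infty}\|V\|_{L^2}$ (valid since $p\ge 2$), run the difference-quotient argument with the test function $v=-D_k^{-l}(\eta^2 D_k^l u)$ using Lemma \ref{202301111641} together with the uniform ellipticity of $y^{2\alpha_1},x^{2\alpha_2}$ away from $\partial\Omega$, and close with Sobolev embedding. The paper likewise separates the weak formulation into $A$ and $B$ with $Vuv$ placed in $B$, which is exactly your reduction to $L_0 u = f-Vu=:g$, so the two arguments coincide in all essentials.
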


\begin{proof}
	1. For any subset $\Omega'\subset\subset \Omega$, we may choose an open set $W$ such that $\Omega' \subset \subset W \subset \subset \Omega$. In addition, we select a  function $ \eta \in C_0^\infty(\mathbb{R}^2)$	such that $\eta \equiv1$ on $\Omega'$, $\eta \equiv 0$ on $\mathbb{R}^2- W$ and $0\le\eta \le 1$.
	Note that $u$ is the weak solution for $Lu=f$, then for any $v\in \mcH_0^1(\Omega)$:
	\begin{equation}\label{Th.2.7.1}
	\iint_{\Omega} y^{2\alpha_1}\pt_{x}u \pt_{x}v+x^{2\alpha_2}\pt_{y}u \pt_{y}vdxdy=\iint_{\Omega} (fv-Vuv)dxdy.		
	\end{equation}
 Let 
	\begin{equation*}
		A=	\iint_{\Omega} y^{2\alpha_1}\pt_{x}u \pt_{x}v+x^{2\alpha_2}\pt_{y}u \pt_{y}vdxdy,\quad B=\iint_{\Omega} (fv-Vuv)dxdy, 
	\end{equation*}
	then $A=B$.
	
	2. Let $0<|l|<\frac{1}{2} \min\{\dist(\Omega',\pt W),\dist(W,\pt \Omega)\} $ and consider that $|l|$ be sufficiently small, then substitute $v=-D_k^{-l}(\eta^2D_k^lu)$ into \eqref{Th.2.7.1}, $k\in \{1,2\}$. 
	Indeed, 
	\begin{equation*}
		\begin{split}
			v&=-\frac{1}{l}D_k^{-l}\left(\eta^2(x)[u(x+le_k)-u(x)]\right)\\
			&=\frac{1}{l^2}\left(\eta^2(x-le_k)[u(x)-u(x-le_k)] -\eta^2(x)[u(x+le_k)-u(x)]\right),	
		\end{split}	
	\end{equation*}
	since $u\in H_{\rm loc}^1(\Omega)$, $V^{\frac{1}{2}}u \in L^2(\Omega)$ and $\supp \eta\subset W$, thus $v\in H_0^1(\Omega) \subset  \mcH_0^1(\Omega)$, and $V^{\frac{1}{2}}v \in L^2(\Omega)$.	
	Then
	\begin{align*}\label{Th.2.7.2}
			A&=	- \left( \iint_{\Omega} y^{2\alpha_1}\pt_{x}u \pt_{x}\left(D_k^{-l}(\eta^2D_k^lu)\right)+ x^{2\alpha_2}\pt_{y}u \pt_{y}\left(D_k^{-l}(\eta^2D_k^lu) \right) dxdy\right)   \\
			&=\iint_{\Omega} D_k^l(y^{2\alpha_1}\pt_{x}u)	\pt_{x}(\eta^2D_k^lu)+ D_k^l(x^{2\alpha_2}\pt_{y}u)	\pt_{y}(\eta^2D_k^lu) dxdy \\
			&=\iint_{\Omega}  (y^{2\alpha_1})_k^l(D_k^l\pt_{x}u)\pt_{x}(\eta^2D_k^lu)+(D_k^ly^{2\alpha_1})\pt_{x}u\pt_{x}(\eta^2D_k^lu)\\
            &\quad+(x^{2\alpha_2})_k^l(D_k^l\pt_{y}u)\pt_{y}(\eta^2D_k^lu)+(D_k^lx^{2\alpha_2})\pt_{y}u\pt_{y}(\eta^2D_k^lu)dxdy\\	
			&=\iint_{\Omega}(y^{2\alpha_1})_k^l(D_k^l\pt_{x}u)(D_k^l\pt_{x}u)\eta^2+(x^{2\alpha_2})_k^l(D_k^l\pt_{y}u)(D_k^l\pt_{y}u) \eta^2dxdy\\
			&\quad  + \iint_{\Omega} 2\eta (y^{2\alpha_1})_k^l (\pt_{x}\eta)(D_k^l\pt_{x}u)(D_k^lu)+ 2\eta (\pt_{x}\eta)(D_k^ly^{2\alpha_1})\pt_{x}u(D_k^lu)+\eta^2 (D_k^ly^{2\alpha_1})\pt_{x}u\pt_{x}(D_k^lu)			
			\\
			&\quad +2\eta (x^{2\alpha_2})_k^l (\pt_{y}\eta)(D_k^l\pt_{y}u)(D_k^lu)+ 2\eta (\pt_{y}\eta)(D_k^lx^{2\alpha_2})\pt_{y}u(D_k^lu)+\eta^2 (D_k^lx^{2\alpha_2})\pt_{y}u\pt_{y}(D_k^lu)dxdy\\			
			&=A_1+A_2,
	\end{align*}
	where 
	\begin{equation*}
	(y^{2\alpha_1})_k^l=
		\begin{cases}
			y^{2\alpha_1},&k=1,\\
			(y+l)^{2\alpha_1},&k=2,
		\end{cases}
	\end{equation*}
	
\begin{equation*}
	(x^{2\alpha_2})_k^l=
	\begin{cases}
		(x+l)^{2\alpha_2},&k=1,\\
		x^{2\alpha_2},  &k=2.
	\end{cases}
\end{equation*}	
 According to the definition of $\eta$, we have
	\begin{equation}\label{Th.2.7.3}
		A_1\ge \delta \iint_{\Omega} \eta^2 |D_k^lDu|^2dxdy \ge \theta  \iint_{\Omega} \eta^2 |D_k^lDu|^2dxdy
	\end{equation}
	for some proper constant $\theta$, $\delta \in (0,1)$, and given that $y^{2\alpha_1},x^{2\alpha_2} \in C^1(0,1]$
	\begin{equation*}
		|A_2|\le C \iint_{\Omega}\eta |D_k^lDu||D_k^lu|+\eta|D_k^lDu||Du|+\eta|D_k^lu||Du|dxdy. 
	\end{equation*}
	Furthermore, by  $\supp \eta\subset \overline{W}$ and Cauchy's inequality with $\epsilon$, then
	\begin{equation*}
		|A_2|\le \epsilon \iint_{\Omega} \eta^2|D_k^lDu|^2dx+\frac{C}{\epsilon}\iint_{W} \left(|D_k^lu|^2+|Du|^2\right)dxdy.
	\end{equation*}
	By invoking the result of Lemma  \ref{202301111641}, we see that $\iint_{W} |D_k^lu|^2dxdy \le C\|u\|_{\mcH_0^1(\Omega)}^2$. Moreover,
	\begin{equation*}
	 \iint_{W} |Du|^2dxdy\le
	\big( \min\limits_{W}\{x^{2\alpha_2},y^{2\alpha_1}\} \big)^{-1}\|u\|_{\mcH_0^1(\Omega)}^2  \le C\|u\|_{\mcH_0^1(\Omega)}^2. 	
	\end{equation*}
 We may choose $\epsilon=\frac{\theta}{2}$, hence
	\begin{equation}\label{Th.2.7.4}
		|A_2|\le \frac{\theta}{2} \iint_{\Omega} \eta^2 |D_k^lDu|^2dxdy + C \|u\|_{\mcH_0^1(\Omega)}^2.
	\end{equation}
	Combining \eqref{Th.2.7.3} with \eqref{Th.2.7.4}, we obtain
	\begin{equation}\label{Th.2.7.5} 
		A \ge \frac{\theta}{2} \iint_{\Omega} \eta^2 |D_k^lDu|^2dxdy -C\|u||_{\mcH_0^1(\Omega)}^2.	
	\end{equation}

	3. Since $v=-D_k^{-l}(\eta^2D_k^lu)$, by Lemma \ref{202301111641} we know 
	\begin{equation*}
		\begin{split}
			\iint_{\Omega} |v|^2dx &\le  C \iint_{\Omega}  y^{2\alpha_1}|\partial_{x}(\eta^2D_k^lu  )|^2+x^{2\alpha_2}|\partial_{y}(\eta^2D_k^lu  )|^2dxdy \\                                 			
			 &\le C\iint_{W} |D_k^lu|^2+\eta^2|D_k^lDu|^2 dxdy\\
			&\le  C\|u||_{\mcH_0^1(\Omega)}^2+C \iint_{\Omega} \eta^2|D_k^lDu|^2dxdy.
		\end{split}
	\end{equation*}
	Applying the Cauchy's inequality with $\epsilon$ and Theorem \ref{202210292227}, and based on the fact $V \in L^p(\Omega)$, $2\le p<\infty$,
	\begin{equation*}
		\begin{split}		
			|B|&\le \iint_{\Omega} \left(|f||v|+V|u||v|\right)dxdy\\
			& \le \|f\|_{L^2(\Omega)} \|v\|_{L^2(\Omega)}+\|u\|_{L^\infty(W)} \iint_{\Omega} V|v|dxdy\\
			& \le \|f\|_{L^2(\Omega)} \|v\|_{L^2(\Omega)}+C(\|f\|_{L^2(\Omega)}+ \|u\|_{L^2(\Omega)})     \iint_{\Omega} V|v|dxdy\\	
			& \le \|f\|_{L^2(\Omega)} \|v\|_{L^2(\Omega)}+C(\|f\|_{L^2(\Omega)}+ \|u\|_{L^2(\Omega)})  \|V\|_{L^2(\Omega)} \|v\|_{L^2(\Omega)}	\\
			& \le  \|f\|_{L^2(\Omega)} \|v\|_{L^2(\Omega)}+C(\|f\|_{L^2(\Omega)}+ \|u\|_{L^2(\Omega)})  \|V\|_{L^p(\Omega)} \|v\|_{L^2(\Omega)}\\
			& \le  C(\|f\|_{L^2(\Omega)}+ \|u\|_{L^2(\Omega)}) \|v\|_{L^2(\Omega)}\\				
			&\le \epsilon \iint_{\Omega}
			 \eta^2|D_k^lDu|^2dxdy+\frac{C}{\epsilon}\iint_{\Omega}|f|^2+|u|^2dxdy+\frac{C}{\epsilon}\|u||_{\mcH_0^1(\Omega)}^2.
		\end{split}	
	\end{equation*}
	Similarly, we choose $\epsilon=\frac{\theta}{4}$, then
	\begin{equation}\label{Th.2.7.6}
		|B|\le 	\frac{\theta}{4} \iint_{\Omega} \eta^2|D_k^lDu|^2dxdy+C\iint_{\Omega}|f|^2+|u|^2dxdy+C\|u||_{\mcH_0^1(\Omega)}^2.
	\end{equation}
	
	4. Thanks to \eqref{Th.2.7.5} and \eqref{Th.2.7.6}, we observe that for $k=1,2$, 
	\begin{equation*}
		\frac{\theta}{4} \iint_{\Omega'} \eta^2|D_k^lDu|^2dxdy\le C\iint_{\Omega} |f|^2+|u|^2 dxdy+C\|u||_{\mcH_0^1(\Omega)}^2
	\end{equation*}
 for any sufficiently small $|l| \neq 0$. Furthermore, since $u \in H^1(\Omega')$, utilizing the result $(2)$ of Theorem 3 \cite[Chapter 5.8.2]{EvansPartial}, we know $Du\in H^1(\Omega')$, hence $u\in H_{\rm loc}^2(\Omega)$.  By the classical Sobolev compact embedding theorem, we know $u \in C^{0,\alpha}(\overline{\Omega'})$, where $\alpha \in (0,1)$.
\end{proof}

\section{ The first eigenvalue}

In this section, we are interested in extremum problems involving the first eigenvalue of problem \eqref{202301172048} when $V \in S_p$, $1<p<\infty$. In particular, we first discuss the properties of the spectrum, which paves the way for finding the optimal potential function. Finally, we characterize the optimal potential function and prove its uniqueness,  where some of ideas were developed in 
\cite{notarantonio1998extremal,egnell1987extremal}. In order to analyze the properties of the spectrum, we rely on the following lemmas, the similar results may be displayed on \cite{EvansPartial,gilbarg2015elliptic,NonlinearPotential}, for the sake of clarity, we will provide specific proof.

\begin{lemma}\label{202301080919}
	Suppose $F: \mathbb{R} \to \mathbb{R}$ is $C^1$ with $F'$ is bounded, then for any $u \in \mcH_0^1(\Omega)$, we have 
			\begin{equation}\label{202201081124}
				\begin{split}
					&F(u) \in \mcH^1(\Omega),\\
					y^{\alpha_1}\partial_{x}F(u)=y^{\alpha_1}F'(u)&\partial_{x}u,x^{\alpha_2}\partial_{y}F(u)=x^{\alpha_2}F'(u)\partial_{y}u.
				\end{split}
			\end{equation}		
		\end{lemma}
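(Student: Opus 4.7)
The plan is to prove the chain rule via a smooth approximation together with a pass-to-the-limit argument in the distributional identity. By definition of $\mcH_0^1(\Omega)$, there is a sequence $\{u_n\}_{n\in \mathbb{N}}\subset C_0^\infty(\Omega)$ with $u_n \to u$ in $\mcH^1(\Omega)$. For every such smooth $u_n$ the classical chain rule applies, so $\partial_x F(u_n)=F'(u_n)\partial_x u_n$ and $\partial_y F(u_n)=F'(u_n)\partial_y u_n$ pointwise. The plan is then to show $F(u_n)\to F(u)$ in $L^2(\Omega)$ and $y^{\alpha_1}F'(u_n)\partial_x u_n\to y^{\alpha_1}F'(u)\partial_x u$, $x^{\alpha_2}F'(u_n)\partial_y u_n\to x^{\alpha_2}F'(u)\partial_y u$ in $L^2(\Omega)$, and then identify $y^{\alpha_1}F'(u)\partial_x u$ and $x^{\alpha_2}F'(u)\partial_y u$ with the weighted distributional derivatives of $F(u)$.

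The $L^2(\Omega)$ convergence of $F(u_n)$ follows from the global Lipschitz bound $|F(s)-F(t)|\le \|F'\|_\infty|s-t|$, which also gives $F(u)\in L^2(\Omega)$ via $|F(u)|\le |F(0)|+\|F'\|_\infty|u|$ and the boundedness of $\Omega$. For the weighted derivative, I would use the splitting
\[
y^{\alpha_1}F'(u_n)\partial_x u_n-y^{\alpha_1}F'(u)\partial_x u=F'(u_n)\bigl(y^{\alpha_1}\partial_x u_n-y^{\alpha_1}\partial_x u\bigr)+\bigl(F'(u_n)-F'(u)\bigr)y^{\alpha_1}\partial_x u.
\]
The first term is controlled by $\|F'\|_\infty\|y^{\alpha_1}\partial_x u_n-y^{\alpha_1}\partial_x u\|_{L^2(\Omega)}\to 0$. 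For the second, after extracting a subsequence so that $u_{n_k}\to u$ pointwise a.e., continuity of $F'$ yields $F'(u_{n_k})\to F'(u)$ a.e., and the pointwise envelope $|F'(u_{n_k})-F'(u)|^2|y^{\alpha_1}\partial_x u|^2\le 4\|F'\|_\infty^2|y^{\alpha_1}\partial_x u|^2\in L^1(\Omega)$ lets the dominated convergence theorem close it out. The $x^{\alpha_2}$ argument is symmetric.

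Finally, I would pass to the limit in the distributional identity $\iint_\Omega F(u_n)\partial_x\varphi\,dxdy=-\iint_\Omega F'(u_n)\partial_x u_n\,\varphi\,dxdy$ for each $\varphi\in C_0^\infty(\Omega)$: the left side tends to $\iint_\Omega F(u)\partial_x\varphi\,dxdy$, and on the compact set $\supp\varphi\subset\Omega$ the weight $y^{\alpha_1}$ is bounded above and below, so the $L^2(\Omega)$ convergence of the weighted product transfers to unweighted $L^2(\supp\varphi)$ convergence, giving the right-hand-side limit $-\iint_\Omega F'(u)\partial_x u\cdot\varphi\,dxdy$. This identifies $\partial_x F(u)=F'(u)\partial_x u$ as distributions, hence $y^{\alpha_1}\partial_x F(u)=y^{\alpha_1}F'(u)\partial_x u\in L^2(\Omega)$, and analogously $x^{\alpha_2}\partial_y F(u)=x^{\alpha_2}F'(u)\partial_y u\in L^2(\Omega)$, yielding $F(u)\in\mcH^1(\Omega)$. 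The main technical obstacle is the nonlinearity $F'(u_n)$, whose only readily available convergence is pointwise a.e.\ along a subsequence; the decomposition above combined with dominated convergence is precisely what converts that into the needed $L^2$ convergence of the products.
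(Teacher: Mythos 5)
Your proof is correct and follows essentially the same approach as the paper: smooth approximation by $C_0^\infty(\Omega)$, extraction of an a.e.\ convergent subsequence, the global Lipschitz bound for $F$ and boundedness of $F'$, and dominated convergence to pass to the limit in the distributional identity. The only cosmetic difference is that you establish $L^2$ convergence of the weighted products $y^{\alpha_1}F'(u_n)\partial_x u_n$ as a separate step before integrating against the test function, whereas the paper carries out the dominated-convergence argument directly on the pairing with $\phi$; both rest on the identical decomposition and estimates.
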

		\begin{proof}
			Firstly, we will show that
			\begin{equation}
				\partial_{x}(y^{\alpha_1}F(u))=y^{\alpha_1}F'(u)\partial_{x}u.
			\end{equation}
			In fact, for any $u \in \mcH_0^1(\Omega)$, there exists a sequence $(u_n)_{n \in \mathbb{N}} \subset C_0^\infty(\Omega)$	such that
			\begin{equation}\label{202301081032}
				\|u_n-u\|_{\mcH^1(\Omega)} \to 0, 	
			\end{equation}
			up to a subsequence, we know 
			\begin{equation}\label{202301081034}
				u_n \to u \ a.e.,	
			\end{equation}
			since $F'$ is continuous, 
			\begin{equation}\label{202301081035}
				F'(u_n) \to F'(u)  \  a.e..	
			\end{equation}
Moreover, $|F(u_n)-F(0)|\le \|F'\|_{L^\infty(\Omega)} |u_n|\in L^2(\Omega)$, then
\begin{equation*}
	|F(u_n)|\le |F(0)|+\|F'\|_{L^\infty(\Omega)} |u_n|,
\end{equation*} 		
i.e. $F(u_n) \in L^2(\Omega)$.  
Note that
\begin{equation*}
	\iint_\Omega |F(u_n)-F(u)|^2dxdy \le \iint_\Omega \|F'\|^2_{L^\infty(\Omega)} |u_n-u|^2dxdy \to 0, \ n\to \infty.
\end{equation*}
For any $\phi \in C_0^\infty(\Omega)$,
 			\begin{equation}\label{202301081110}
				\begin{split}
					&\iint_{\Omega} y^{\alpha_1}F(u)\partial_{x}\phi dxdy=\lim\limits_{n \to \infty}  \iint_{\Omega} y^{\alpha_1}F(u_n)\partial_{x} \phi dxdy\\				
					&= -\lim\limits_{n \to \infty} \iint_{\Omega} \partial_{x} (y^{\alpha_1}F(u_n)) \phi dxdy
					=-\lim\limits_{n \to \infty} \iint_{\Omega} y^{\alpha_1}  \partial_{x} F(u_n) \phi dxdy\\
					&=-\lim\limits_{n \to \infty} \iint_{\Omega}y^{\alpha_1} F'(u_n)\partial_{x} u_n \phi dxdy=
					-\iint_{\Omega} y^{\alpha_1} F'(u)\partial_{x} u \phi dxdy,
				\end{split}
			\end{equation}
			indeed, for the last equality, applying the fact \eqref{202301081032} \eqref{202301081034} \eqref{202301081035}, and given that $F \in C^1$ and $F'$ is bounded, using the dominated convergence theorem, 
			\begin{equation*}
				\begin{split}
					&\iint_{\Omega} \big|\left(y^{\alpha_1} F'(u)\partial_{x} u- y^{\alpha_1} F'(u_n)\partial_{x} u_n      \right) \phi \big|dxdy\\
					&\le \iint_{\Omega} |F'(u_n)-F'(u)| |y^{\alpha_1}\partial_{x}u| |\phi|+ |F'(u_n)| |y^{\alpha_1}\partial_{x}u_n-y^{\alpha_1}\partial_{x}u| |\phi|dxdy	\to 0, \ n \to \infty.
				\end{split}
			\end{equation*}
			Moreover, for any $\phi \in C_0^\infty(\Omega)$,
			\begin{equation}\label{202301081122}
				\begin{split}
					\iint_\Omega (y^{\alpha_1} \partial_{x}F(u)) \phi dxdy& =\iint_{\Omega} \partial_{x}F(u)(y^{\alpha_1}\phi)dxdy=-\iint_{\Omega} F(u) (\partial_{x}(y^{\alpha_1}\phi))dxdy\\
					&=-\iint_{\Omega} (y^{\alpha_1}F(u)) \partial_{x}\phi dxdy=\iint_{\Omega} \partial_{x}(y^{\alpha_1}F(u)) \phi dxdy.
				\end{split}
			\end{equation}
			Combining \eqref{202301081110} and \eqref{202301081122}, we see that $y^{\alpha_1}\partial_{x}F(u)=y^{\alpha_1}F'(u)\partial_{x}u$. Similarly, we can obtain that $x^{\alpha_2}\partial_{y}F(u)=x^{\alpha_2}F'(u)\partial_{y}u$. Though simple verification, the result \eqref{202201081124} is obtained.
		\end{proof}
		
		\begin{lemma}\label{202301112303}
			Let $u^+=\max(u,0)$, $u^-=-\min(u,0)$. For any $u \in \mcH_0^1(\Omega)$, we have 
			\item (1)	
			\begin{equation}
				\partial_{x}u^+=
				\begin{cases}
					\partial_{x}u   & \  \mbox{a.e. on} \ \{u>0\},\\
					0,                     & \  \mbox{a.e. on}  \ \{u\le 0 \},
				\end{cases}
			\end{equation}
			
			\begin{equation}
				\partial_{x}u^-=
				\begin{cases}
					0,  & \  \mbox{a.e. on} \  \{u\ge 0 \},\\
					-\partial_{x}u ,                     & \  \mbox{a.e. on}  \ \{u< 0\},
				\end{cases}
			\end{equation}	
			
			\begin{equation}
			\partial_{y}u^+=
				\begin{cases}
				\partial_{y}u   & \  \mbox{a.e. on} \ \{u>0\},\\
					0,                     & \  \mbox{a.e. on}  \ \{u\le 0\},
				\end{cases}
			\end{equation}
			
			\begin{equation}
			\partial_{y}u^-=
				\begin{cases}
					0,  & \  \mbox{a.e. on} \  \{u\ge 0\},\\
					-\partial_{y}u ,     & \  \mbox{a.e. on}  \ \{u< 0\}.
				\end{cases}
			\end{equation}	
			
			\item (2) For any $u \in \mcH_0^1(\Omega)$, we have
			\begin{equation}
				Du=0  \ \mbox{ a.e. on the set} \ \{u=0\}.
			\end{equation}
			
			\item (3) If $u \in \mcH_0^1(\Omega)$, we have $|u| \in \mcH_0^1(\Omega)$.	  
		\end{lemma}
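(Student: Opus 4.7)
The plan is to derive all three parts from the chain rule of Lemma \ref{202301080919} by approximating the Lipschitz maps $t\mapsto t^{\pm}$ and $t\mapsto |t|$ by $C^1$ functions with uniformly bounded derivatives, and then passing to the limit using the distributional identification carried out in Lemma \ref{202210162153} together with dominated convergence.

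For part (1), I would take $F_\epsilon(t)=\sqrt{t^2+\epsilon^2}-\epsilon$ for $t>0$ and $F_\epsilon(t)=0$ for $t\le 0$; this is $C^1(\mb{R})$ with $F_\epsilon'$ valued in $[0,1]$, so Lemma \ref{202301080919} yields $F_\epsilon(u)\in \mcH^1(\Omega)$ together with
\[
y^{\alpha_1}\partial_x F_\epsilon(u)=F_\epsilon'(u)\,y^{\alpha_1}\partial_x u,\qquad x^{\alpha_2}\partial_y F_\epsilon(u)=F_\epsilon'(u)\,x^{\alpha_2}\partial_y u.
\]
As $\epsilon\to 0^+$, $F_\epsilon(u)\to u^+$ pointwise with $|F_\epsilon(u)|\le |u|$, hence in $L^2(\Omega)$, while $F_\epsilon'(u)\to \chi_{\{u>0\}}$ pointwise, so dominated convergence (with dominants $|y^{\alpha_1}\partial_x u|$ and $|x^{\alpha_2}\partial_y u|$) gives the $L^2$ limits on the right. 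Identifying the weak limit as a distributional derivative, exactly as in the proof of Lemma \ref{202210162153}, yields the claimed formulas for $\partial_x u^+$ and $\partial_y u^+$; the formulas for $u^-$ follow by applying the same reasoning to $-u$.

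Part (2) is then a direct consequence: combining $u=u^+-u^-$ with (1),
\[
y^{\alpha_1}\partial_x u = y^{\alpha_1}\partial_x u^+ - y^{\alpha_1}\partial_x u^- = \chi_{\{u\ne 0\}}\,y^{\alpha_1}\partial_x u,
\]
so $\chi_{\{u=0\}}\,y^{\alpha_1}\partial_x u = 0$ a.e., i.e.\ $\partial_x u=0$ a.e.\ on $\{u=0\}$; $\partial_y u$ is handled the same way. For part (3) I would use $G_\epsilon(t)=\sqrt{t^2+\epsilon^2}-\epsilon$, which is $C^\infty(\mb{R})$ with $G_\epsilon(0)=0$ and $|G_\epsilon'|\le 1$. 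Picking $u_n\in C_0^\infty(\Omega)$ with $u_n\to u$ in $\mcH^1(\Omega)$, each $G_\epsilon(u_n)\in C_0^\infty(\Omega)\subset \mcH_0^1(\Omega)$, and a two-term splitting of $G_\epsilon'(u_n)(y^{\alpha_1}\partial_x u_n)-G_\epsilon'(u)(y^{\alpha_1}\partial_x u)$ combined with dominated convergence gives $G_\epsilon(u_n)\to G_\epsilon(u)$ in $\mcH^1(\Omega)$, so $G_\epsilon(u)\in \mcH_0^1(\Omega)$. Letting $\epsilon\to 0$, $G_\epsilon(u)\to |u|$ in $L^2$; and since $G_\epsilon'(u)\to \mathrm{sgn}(u)$ pointwise away from $\{u=0\}$ while $\partial_x u=0$ on $\{u=0\}$ by (2), dominated convergence gives $G_\epsilon'(u)\,y^{\alpha_1}\partial_x u \to \mathrm{sgn}(u)\,y^{\alpha_1}\partial_x u$ in $L^2$, and similarly for $\partial_y$; hence $G_\epsilon(u)\to |u|$ in $\mcH^1(\Omega)$ and $|u|\in \mcH_0^1(\Omega)$ by closedness.

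The crux is the interplay between (2) and (3): passing to the limit in $G_\epsilon'(u)\,y^{\alpha_1}\partial_x u$ requires knowing that $\partial_x u$ vanishes on $\{u=0\}$, which is precisely (2) — otherwise the limit at the zero set of $u$ would be ambiguous because $G_\epsilon'(0)=0$ while $\mathrm{sgn}(0)$ is undefined. Apart from this, the proof reduces to routine dominated convergence arguments paralleling the Cauchy-sequence identification in Lemma \ref{202210162153}.
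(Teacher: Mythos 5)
Your treatment of parts (1) and (2) matches the paper's proof essentially verbatim: you use the same regularization $F_\epsilon$, the same appeal to the chain rule of Lemma~\ref{202301080919}, the same dominated-convergence passage to the limit, and the same decomposition $u=u^+-u^-$ for (2).

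For part (3) you take a genuinely different route. The paper's proof passes through the unweighted space $H_0^1(\Omega)$: it observes that each smooth approximant $u_n\in C_0^\infty(\Omega)$ lies in $H_0^1(\Omega)$, invokes the classical fact that $u_n^\pm\in H_0^1(\Omega)$, approximates $u_n^+$ by smooth $v_n$ in the $H^1$ norm, and closes the argument with the triangle-inequality estimate
$\|u_n^+-u^+\|_{\mcH^1(\Omega)}\le \tfrac12\|u_n-u\|_{\mcH^1(\Omega)}+\tfrac12\,\big\|\,|u_n|-|u|\,\big\|_{\mcH^1(\Omega)}\le\|u_n-u\|_{\mcH^1(\Omega)}$.
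Your proof instead stays entirely inside the weighted scale: you apply the smooth truncation $G_\epsilon$ directly to the approximants $u_n\in C_0^\infty(\Omega)$ (so $G_\epsilon(u_n)\in C_0^\infty(\Omega)$ because $G_\epsilon(0)=0$), pass $n\to\infty$ to get $G_\epsilon(u)\in\mcH_0^1(\Omega)$ by closedness, and only then pass $\epsilon\to 0$, using part (2) to control the ambiguous contribution from $\{u=0\}$. This double-limit scheme has the advantage of never appealing to the $H_0^1$ theory and of producing $|u|$ in one shot rather than via $u^+$ and $u^-$ separately. It also sidesteps the paper's bound
$\big\|\,|u_n|-|u|\,\big\|_{\mcH^1(\Omega)}\le\|u_n-u\|_{\mcH^1(\Omega)}$,
which is immediate for the $L^2$ part by the $1$-Lipschitz property of $t\mapsto|t|$ but is not obvious for the weighted gradient part (where one has $\partial_x|u|={\rm sgn}(u)\,\partial_x u$ and the signs of $u_n$ and $u$ need not agree); your argument avoids the need to justify that step. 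One bookkeeping remark: when you invoke dominated convergence on $(G_\epsilon'(u_n)-G_\epsilon'(u))\,y^{\alpha_1}\partial_x u$ you implicitly pass to an a.e.\ convergent subsequence of $u_n$; since the putative limit is independent of the subsequence, the full sequence converges, but it is worth stating. Modulo that standard remark, the proposal is correct and slightly tighter than the paper's on part (3).
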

		
		\begin{proof}
			\item (1) For $r \in \mathbb{R}$ and $\epsilon >0$, let 
			\begin{equation}
				F_{\epsilon}(r)=
				\begin{cases}
					(r^2+\epsilon^2)^{\frac{1}{2}}-\epsilon,   & \ {r\ge 0},\\
					0,                     & \    {r< 0}.
				\end{cases}
			\end{equation}
			Then we find that $F_\epsilon \in C^1$ and $F_\epsilon'$ is bounded, and $u^+=\lim\limits_{\epsilon \to 0} F_{\epsilon}(u)$. By invoking the Lemma \ref{202301080919}, for any $\phi \in C_0^\infty(\Omega)$    we know 
			\begin{equation*}
				\iint_{\Omega} y^{\alpha_1} F_{\epsilon}(u) \partial_{x}\phi dxdy= -\iint_{\Omega} y^{\alpha_1} F_{\epsilon}'(u) \partial_{x}u  \phi dxdy,
			\end{equation*}
			letting $\epsilon \to 0$, utilizing the dominated convergence theorem,
			\begin{equation}\label{202301081232}
				\begin{split}	
					\iint_{\Omega} y^{\alpha_1} F_{\epsilon}(u) \partial_{x}\phi dxdy= \iint_{\Omega} y^{\alpha_1} u^+  \phi dxdy,\\
					-\iint_{\Omega} y^{\alpha_1} F_{\epsilon}'(u) \partial_{x}u  \phi dxdy= -\iint_{\Omega} y^{\alpha_1} \partial_{x}u \chi_{\{u>0\}} \phi dxdy,
				\end{split}
			\end{equation}
			so that 
			\begin{equation}
				y^{\alpha_1} \partial_{x} u^+ = y^{\alpha_1} \partial_{x}u \chi_{\{u>0\}} \ a.e..
			\end{equation}
			The rest of the cases are similar to the above process, we will not give a detailed description here.
			
			\item (2) From the result $(1)$, when $x \in  \{u\ge 0\}$, we have $Du^-=0$ a.e., when $x \in  \{u\le 0\}$, we have $Du^+=0$ a.e.. The set $\{u=0\}=\{u \ge 0\} \cap \{u \le 0\}$, and $Du=Du^+-Du^-=0$ on set $\{u=0\}$.
			
			\item (3) For any $u \in \mcH_0^1(\Omega)$, there is a sequence $(u_n)_{n \in \mathbb{N}} \subset C_0^\infty(\Omega)$ such that
			\begin{equation}\label{202302222249}
				\|u_n-u\|_{\mcH^1(\Omega)} \to 0, \ n \to \infty,
			\end{equation}
			then $u_n\in \mcH^1(\Omega)$, furthermore $u_n \in H_{\rm loc}^1(\Omega)$. Since $\supp u_n \subset \Omega$, we deduce that $u_n\in H_0^1(\Omega)$. Combining the results of $(1)$ and $(2)$, we know that $u_n^+, u_n^- \in H_0^1(\Omega)$. Thus there is a sequence  $\{v_n\}_{n \in \mathbb{N}} \subset C_0^\infty(\Omega)$ such that $\|v_n-u_n^+\|_{H^1(\Omega)} \to 0$. Given that 
			$u_n^+=\max\{u_n,0\}=\frac{1}{2}(u_n+|u_n|)$, $u^+=\max\{u,0\}=\frac{1}{2}(u+|u|)$, from \eqref{202302222249} we have 
	\begin{equation*}
		\begin{split}
			\|u_n^+-u^+\|_{\mcH^1(\Omega)} &\le \frac{1}{2} \|u_n-u\|_{\mcH^1(\Omega)} + \frac{1}{2} \Big{\|} \big| |u_n|-|u| \big| \Big{\|}_{\mcH^1(\Omega)}\\
			&\le \|u_n-u\|_{\mcH^1(\Omega)} \to 0, \ n\to \infty.
		\end{split}
	\end{equation*}	
	Hence,	
	\begin{equation*}
		\begin{split}
			\|v_n-u^+\|_{\mcH^1(\Omega)} &\le  \|v_n-u_n^+\|_{\mcH^1(\Omega)}+\|u_n^+-u^+\|_{\mcH^1(\Omega)}\\
			&\le \|v_n-u_n^+\|_{H^1(\Omega)}+\|u_n^+-u^+\|_{\mcH^1(\Omega)} \to 0, \ n\to \infty,
		\end{split}
	\end{equation*}
	we have $u^+\in \mcH_0^1(\Omega)$. Similarly, $u^-\in \mcH_0^1(\Omega)$. Consider that $|u|=u^+ + u^-$, then $|u|\in \mcH_0^1(\Omega)$.	
		\end{proof}

\begin{lemma}\label{202211101008}
	$(1)$ All the eigenvalues of $L$ is real and can be arranged in a monotone sequence on the basis of its (finite) multiplicity: 
	\begin{equation*}
		\begin{split}
\sigma(L)&=\{\lambda_k\}_{k=1}^{\infty},\quad 
\quad  0<\lambda_1 \le \lambda_2\le \lambda_3\le \cdots \le \lambda_k \cdots \rightarrow \infty, \quad k\rightarrow \infty.
		\end{split}		
	\end{equation*}
	
	$(2)$ There exists an  orthonormal basis $\{w_k\}_{k=1}^{\infty}\subset L^2(\Omega)$, where $w_k\in H_0(\Omega)$ is an eigenfunction with respect to $\lambda_k$, i.e., 
	\begin{equation*}
		\begin{cases}
			Lw_k=\lambda_kw_k, & x\in \Omega, \\
			u=0, & x\in\partial \Omega,
		\end{cases} 
	\end{equation*}
	for $k=1,2 \cdots. $
	
	$(3)$ We have
	\begin{equation}\label{202211051630}
		\lambda_k=\inf_{\substack{E \subset H_0(\Omega)\\  \dim(E)=k }} \sup_{\substack{u \in E \\ \|u\|_{L^2{(\Omega)}} = 1}} 	a(u,u).
	\end{equation}
	In particular, assuming that we have already computed $u_1,u_2,\cdots, u_{k-1}$ the $(k-1)\raisebox{0mm}{-}th$ first eigenfunctions, we also have:
	$\lambda_k=\inf\{a(u,u)\mid u\in H_0(\Omega), u\bot V_{k-1}, \|u\|_{L^2(\Omega)}=1\}$, where $V_{k-1}={\rm span}\{u_1,u_2\cdots,u_{k-1}\}$, the equality holds if and only if $u=w_k$.
	
	$(4)$ The eigenvalue $\lambda_1$ is simple and the first eigenfunction  $u_1$ is positive on $\Omega$.
\end{lemma}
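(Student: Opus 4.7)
The plan is to pass to the inverse operator and reduce everything to the spectral theorem for compact self-adjoint operators on a Hilbert space, following the template used for standard Schr\"odinger operators and adapted to our degenerate setting. For $f\in L^2(\Omega)$ let $Tf=u$ denote the unique weak solution of $Lu=f$ furnished by Theorem \ref{202301161746}. First I would verify that $T:L^2(\Omega)\to L^2(\Omega)$ is well defined, bounded, self-adjoint and positive: boundedness follows from the estimate $\|u\|_{H_0(\Omega)}\le C\|f\|_{L^2(\Omega)}$ implicit in the Lax--Milgram step (coercivity of $a$), self-adjointness from $(Tf,g)=a(Tg,Tf)=a(Tf,Tg)=(f,Tg)$ using the symmetry of $a$, and positivity from $(Tf,f)=a(Tf,Tf)\ge 0$. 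Injectivity of $T$ is immediate: if $Tf=0$, then $a(0,v)=(f,v)$ for all $v\in H_0(\Omega)$, so $f=0$. Compactness of $T$ comes from factoring $T$ as $L^2(\Omega)\to H_0(\Omega)\hookrightarrow L^2(\Omega)$ and invoking the compact embedding in Remark \ref{202211051625}.

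Next I would apply the spectral theorem for compact, self-adjoint, positive, injective operators to obtain a sequence $\mu_1\ge\mu_2\ge\cdots>0$ with $\mu_k\to 0$ and an $L^2$-orthonormal basis $\{w_k\}_{k\ge 1}$ of eigenfunctions with $Tw_k=\mu_k w_k$. Setting $\lambda_k=1/\mu_k$ yields $Lw_k=\lambda_k w_k$ weakly, proving $(1)$ and $(2)$; the fact that $w_k\in H_0(\Omega)$ follows from $w_k=\mu_k^{-1}Tw_k$ and the mapping property of $T$ into $H_0(\Omega)$. Positivity $\lambda_1>0$ is then a consequence of the coercivity
\begin{equation*}
a(u,u)\ge\|y^{\alpha_1}\pt_xu\|_{L^2(\Omega)}^2+\|x^{\alpha_2}\pt_yu\|_{L^2(\Omega)}^2\ge C\|u\|_{L^2(\Omega)}^2,
\end{equation*}
using the Poincar\'e inequality from Remark \ref{202209261124}.

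For $(3)$, I would expand an arbitrary $u\in H_0(\Omega)$ in the basis, $u=\sum_k c_k w_k$, and verify by density of $H_0(\Omega)$ in the graph norm that $a(u,u)=\sum_k\lambda_k c_k^2$ while $\|u\|_{L^2(\Omega)}^2=\sum_k c_k^2$. The usual Courant--Fischer/Poincar\'e argument then gives the min-max identity
\begin{equation*}
\lambda_k=\inf_{\substack{E\subset H_0(\Omega)\\ \dim E=k}}\sup_{\substack{u\in E\\ \|u\|_{L^2(\Omega)}=1}}a(u,u),
\end{equation*}
with the supremum achieved on $E=\mathrm{span}\{w_1,\dots,w_k\}$, and the inductive characterization is obtained by restricting to the orthogonal complement $V_{k-1}^\perp$.

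The main obstacle is $(4)$, the simplicity and positivity of the ground state, because it requires combining the truncation result with the Harnack-type estimate for the degenerate operator. First, given any normalized first eigenfunction $u_1$, Lemma \ref{202301112303}(3) shows $|u_1|\in\mcH_0^1(\Omega)$, and Lemma \ref{202301112303}(1)--(2) give $y^{2\alpha_1}|\pt_x|u_1||^2+x^{2\alpha_2}|\pt_y|u_1||^2=y^{2\alpha_1}|\pt_x u_1|^2+x^{2\alpha_2}|\pt_y u_1|^2$ a.e., hence $a(|u_1|,|u_1|)=a(u_1,u_1)=\lambda_1$. Thus $|u_1|$ also realizes the minimum in \eqref{202211051630} for $k=1$, so it is a first eigenfunction and satisfies $L|u_1|-\lambda_1|u_1|=0$ weakly with $|u_1|\ge 0$. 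Applying Theorem \ref{202210292207} with $c=\lambda_1$ on any ball $B_{4r}\subset\subset\Omega$ yields $\sup_{B_r}|u_1|\le C\inf_{B_r}|u_1|$; together with $\||u_1|\|_{L^2(\Omega)}=1\ne 0$ and a covering argument, this forces $|u_1|>0$ throughout $\Omega$, so $u_1$ does not change sign. If $\lambda_1$ had a two-dimensional eigenspace, then any normalized eigenfunction $v_1$ orthogonal to $u_1$ would also satisfy $|v_1|>0$ in $\Omega$ by the same reasoning, hence $v_1$ has constant sign and $(u_1,v_1)_{L^2}\ne 0$, contradicting orthogonality. Therefore $\lambda_1$ is simple and $u_1$ may be taken strictly positive.
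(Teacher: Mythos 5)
Your proposal is correct and follows essentially the same route as the paper: the paper outsources parts (1)--(3) to ``standard functional analysis and compactness theory'' via Remark \ref{202211051625}, which is exactly the inverse-operator/compact-self-adjoint-spectral-theorem argument you spell out, and your treatment of part (4) --- showing $|u_1|\in H_0(\Omega)$ is again a first eigenfunction via Lemma \ref{202301112303}, invoking the Harnack inequality of Theorem \ref{202210292207} to get $|u_1|>0$, and then ruling out a second orthogonal eigenfunction of fixed sign --- is precisely the paper's argument, only written out in more detail (with the needed covering step made explicit).
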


\begin{proof}
	Employing standard functional analysis and compactness theory $(\mbox{remark} \ \ref{202211051625})$, the desired results $(1)$, $(2)$ and $(3)$ are simply acquired. From \eqref{202211051630} and Lemma \ref{202301112303}, $|u|$ is the eigenfunction for $\lambda_1$ if $u$ is. 
	Then  for any $\Omega' \subset \subset \Omega$,  we have $\sup\limits_{\Omega'}u \le C\inf\limits_{\Omega'}u$ for the equation $Lu-\lambda_1u=0$ by Theorem \ref{202210292207} . Since $|u|$ is non-negative in $\Omega$, we further obtain that $|u|$ is a positive (a.e.) eigenfunction. This indicates that the eigenfunctions of $\lambda_1$ are either positive or negative and thus it is impossible that two of them are orthogonal, hence $\lambda_1$ is simple.	
\end{proof}

From Lemma \ref{202211101008},  we obviously have $\lambda_1(V) \ge \lambda_1(0)$, so that $\inf\limits_{V \in S_p}\lambda_1(V)$ would be achieved by $V_0=0$. So, we are prefer to consider the problem $\sup\limits_{V \in S_p}\lambda_1(V)$, $1<p<\infty$.  Besides, 
\begin{equation}\label{202211101611}
	\begin{split}
\lambda_1(V)&=\inf_{\substack{u\in H_0(\Omega) \\  u \neq 0 }}	\frac{a_1(u,u)+\iint_\Omega Vu^2dxdy}{\iint_\Omega |u|^2dxdy}\\
&\le \  \frac{a_1(u,u)+\|V\|_{L^p(\Omega)} \|u\|_{L^{2q}(\Omega)}^2 }{\iint_\Omega |u|^2dxdy}\\
&\le  \  \frac{a_1(u,u)+ M \|u\|_{L^{2q}(\Omega)}^2 }{\iint_\Omega |u|^2dxdy},
	\end{split}
\end{equation}
where 
\begin{equation*}
	a_1(u,u)= \iint_\Omega|y^{\alpha_1}\pt_{x}u|^2+|x^{\alpha_2}\pt_{y}u|^2dxdy.
\end{equation*}
Setting 
\begin{equation*}
	J(u)=\frac{a_1(u,u)+ M \|u\|_{L^{2q}(\Omega)}^2 }{\iint_\Omega |u|^2dxdy}, \ u\neq 0, 
\end{equation*}
hence, if $u \in \mcH_0^1(\Omega) \cap  L^{2q}(\Omega) \subset H_0(\Omega)$, we immediately learn that, for all $V\in S_p$, $1<p<\infty$, 
\begin{equation}\label{202211122235}
\lambda_1(V) \le \inf_{\substack{u\in \mcH_0^1(\Omega) \cap  L^{2q}(\Omega)\\  u \neq 0 }}J(u).
\end{equation}

In the next work, for convenience we denote 
\begin{equation*}
	U=\mcH_0^1(\Omega) \cap  L^{2q}(\Omega),\quad L_1u=-\left(y^{2\alpha_1}\partial_{x}^2u+x^{2\alpha_2}\partial_{y}^2u\right), 
\end{equation*}
where $\frac{1}{p}+\frac{1}{q}=1$. 
 
\begin{lemma}\label{202211122210}
	The functional $J(u)$ attains its minimum in $\mcH_0^1(\Omega) \cap  L^{2q}(\Omega)$, further, the minimizers for $J(u)$ is non-negative.
\end{lemma}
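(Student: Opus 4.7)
The plan is to apply the direct method of the calculus of variations. First, I observe that $J$ is homogeneous of degree zero: $J(\lambda u) = J(u)$ for every $\lambda \neq 0$, so the infimum is unchanged when restricted to the set $\{u \in U : \|u\|_{L^2(\Omega)} = 1\}$. Let $I = \inf\{J(u) : u \in U,\ u\neq 0\}$ and pick a minimizing sequence $\{u_n\}_{n\in\mathbb{N}} \subset U$ with $\|u_n\|_{L^2(\Omega)} = 1$ and $J(u_n) \to I$. Since the numerator of $J(u_n)$ equals $a_1(u_n,u_n) + M\|u_n\|_{L^{2q}(\Omega)}^2$ and tends to $I$, both $a_1(u_n,u_n)$ and $\|u_n\|_{L^{2q}(\Omega)}$ are bounded. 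By the equivalent norm for $\mcH_0^1(\Omega)$ from Remark \ref{202209261124} (Poincaré inequality), $\{u_n\}$ is bounded in $\mcH_0^1(\Omega)$; it is also bounded in the reflexive space $L^{2q}(\Omega)$ since $1 < 2q < \infty$.

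Next I extract a subsequence (not relabelled) with $u_n \rightharpoonup u^*$ weakly in $\mcH_0^1(\Omega)$ and $u_n \rightharpoonup u^*$ weakly in $L^{2q}(\Omega)$; the weak limits coincide because the compact embedding in Theorem \ref{202210121135} yields strong convergence in $L^2(\Omega)$ (indeed, $2 \in [1,4)$), and hence pointwise a.e. convergence along a further subsequence, identifying the two weak limits. Strong $L^2$-convergence gives $\|u^*\|_{L^2(\Omega)} = 1$, so in particular $u^* \not\equiv 0$ and $u^* \in U$. Applying the weak lower semicontinuity of the seminorm $u \mapsto a_1(u,u)^{1/2}$ on $\mcH_0^1(\Omega)$ and of the $L^{2q}$-norm gives
\begin{equation*}
a_1(u^*,u^*) + M\|u^*\|_{L^{2q}(\Omega)}^2 \le \liminf_{n \to \infty}\left(a_1(u_n,u_n) + M\|u_n\|_{L^{2q}(\Omega)}^2\right) = I,
\end{equation*}
and since $\|u^*\|_{L^2(\Omega)}^2 = 1$, we conclude $J(u^*) \le I$, so $u^*$ attains the infimum.

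Finally, for the non-negativity claim I invoke Lemma \ref{202301112303}: if $u^* \in \mcH_0^1(\Omega)$ then $|u^*| \in \mcH_0^1(\Omega)$, and the pointwise identities for $\partial_x u^{\pm}$ and $\partial_y u^{\pm}$ imply $a_1(|u^*|,|u^*|) = a_1(u^*,u^*)$, together with $\||u^*|\|_{L^r(\Omega)} = \|u^*\|_{L^r(\Omega)}$ for every $r$. Hence $J(|u^*|) = J(u^*) = I$, so $|u^*| \in U$ is itself a minimizer; replacing $u^*$ by $|u^*|$ yields a non-negative minimizer. The main technical point is justifying the combined weak compactness in $\mcH_0^1(\Omega) \cap L^{2q}(\Omega)$ and identifying the two weak limits as a single function, which is handled cleanly by the compact embedding $\mcH_0^1(\Omega) \hookrightarrow L^2(\Omega)$ forcing a.e. convergence along a subsequence.
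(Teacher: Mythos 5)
Your proposal is correct and follows essentially the same route as the paper: the direct method with a minimizing sequence, weak compactness from reflexivity, the compact embedding $\mcH_0^1(\Omega)\hookrightarrow L^2(\Omega)$ of Theorem \ref{202210121135}, weak lower semicontinuity, and Lemma \ref{202301112303} to pass to $|u^*|$ for non-negativity. In fact your write-up is slightly more careful than the paper's, since the normalization $\|u_n\|_{L^2(\Omega)}=1$ both justifies the boundedness of the minimizing sequence in $\mcH_0^1(\Omega)\cap L^{2q}(\Omega)$ and guarantees the limit is nonzero, two points the paper passes over quickly.
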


\begin{proof}
	First of all, we notice that the functional $J(u)$ is not identically equal to $+\infty$. Let thus $\{u_n\}_{n \in N}$ be a minimizing sequence, i.e., 
	\begin{equation*}
		J(u_n) \downarrow \inf_{u\in U } J(u).
	\end{equation*}
So that we may assume the sequence is bounded in $U$, by Lemma \ref{Lem.20221006}, there is a subsequence $\{u_n\}_{n \in N}$, again denoted by itself, such that 
\begin{equation*}
	u_n \rightharpoonup z \  \mbox{in} \  U,
\end{equation*} 
 and by invoking Theorem \ref{202210121135}, we further  have 
  \begin{equation*}
  	u_n \to z \  \mbox{in} \   L^2(\Omega).
  \end{equation*}     
Utilizing the  lower semicontinuity
\begin{equation}
	J(z) \le \inf \limits_{u \in U} J(u).
\end{equation}
In summary, $z \in U$ and $J(z) \le \inf \limits_{u \in U} J(u)$, this proves that $J(z)=\inf \limits_{u \in U} J(u)$. In addition, by Lemma \ref{202301112303} we have $J(|u|)=J(u)$, we may assume that the minimizers are non-negative.
\end{proof}

\begin{lemma}\label{202211131121}
	Let $\tilde{u}$ ba a minimizer of $J(u)$, and assume that there exists  a function 	$\tilde{V} \in S_p$ with $1<p<\infty$ such that 
	\begin{equation}\label{202211102124}
	   L_1 \tilde{u}+ \tilde{V} \tilde{u} =\lambda \tilde{u}	
	\end{equation}
where $\lambda:= J(\tilde{u})$ is the minimum value of $J(u)$. Then 
\begin{equation*}
	\lambda_1(\tilde{V})=\lambda=J(\tilde{u}).
\end{equation*}	
\end{lemma}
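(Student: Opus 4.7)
My plan is to combine the upper bound \eqref{202211122235} with a positivity/orthogonality argument to force the eigenvalue $\lambda$ given by the hypothesis to coincide with the first eigenvalue of $L_1+\tilde V$. The ``$\le$'' direction is immediate: since $\tilde u\in U = \mcH_0^1(\Omega)\cap L^{2q}(\Omega)$ and $J(\tilde u)=\lambda$ is the infimum of $J$ over $U$, applying \eqref{202211122235} with $V=\tilde V\in S_p$ yields
\begin{equation*}
\lambda_1(\tilde V)\;\le\;\inf_{u\in U,\,u\ne 0} J(u)\;=\;J(\tilde u)\;=\;\lambda.
\end{equation*}

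For the reverse inequality, note that by hypothesis $\tilde u$ is an eigenfunction of $L=L_1+\tilde V$ associated to $\lambda$, so by Lemma \ref{202211101008}(1)--(2) one has $\lambda=\lambda_k(\tilde V)$ for some $k\ge 1$, and $\tilde u$ belongs to the corresponding eigenspace. Suppose for contradiction that $\lambda_1(\tilde V)<\lambda$; then $k\ge 2$. Because the system $\{w_j\}$ from Lemma \ref{202211101008}(2) is an $L^2$-orthonormal basis of eigenfunctions, eigenfunctions corresponding to distinct eigenvalues are orthogonal in $L^2(\Omega)$, so in particular
\begin{equation*}
\iint_\Omega \tilde u(x,y)\,u_1(\tilde V)(x,y)\,dxdy\;=\;0,
\end{equation*}
where $u_1(\tilde V)$ denotes the first eigenfunction for $\tilde V$.

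On the other hand, by Lemma \ref{202211122210} we may take $\tilde u\ge 0$ in $\Omega$, and $\tilde u\not\equiv 0$ because $\tilde u$ minimizes $J$ (and $J(0)$ is undefined). Lemma \ref{202211101008}(4) guarantees that $u_1(\tilde V)>0$ in $\Omega$, so the integral above must be strictly positive, a contradiction. Hence $\lambda_1(\tilde V)=\lambda=J(\tilde u)$.

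The conceptually delicate step is precisely this last one: confirming that $\tilde u$ must be an eigenfunction of $\lambda_1(\tilde V)$ rather than of some higher eigenvalue. Everything else — the upper bound from \eqref{202211122235}, the $L^2$-orthogonality of eigenfunctions for distinct eigenvalues, the simplicity and strict positivity of the ground state — is packaged in Lemma \ref{202211101008}, and the non-negativity of the minimizer $\tilde u$ is exactly what Lemma \ref{202211122210} was established for. Once these are invoked in sequence, the proof collapses to the short contradiction above.
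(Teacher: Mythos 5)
Your argument is correct and follows essentially the same route as the paper: the upper bound comes from \eqref{202211122235}, and the reverse direction is the same contradiction — if $\lambda\neq\lambda_1(\tilde V)$ then $\tilde u$ is $L^2$-orthogonal to the first eigenfunction, which is impossible since $\tilde u\ge 0$, $\tilde u\not\equiv 0$ (Lemma \ref{202211122210}) and the ground state is positive (Lemma \ref{202211101008}(4)). The only cosmetic difference is that the paper obtains the orthogonality by testing the two weak eigenvalue equations against each other via the symmetric form $a(\cdot,\cdot)$, whereas you invoke the standard orthogonality of eigenfunctions for distinct eigenvalues, which amounts to the same computation.
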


\begin{proof}
	Without loss of generality, we may assume $\|\tilde{u}\|_{L^2(\Omega)}$=1. Assume that $v$ is the first eigenfunction with respect to $\tilde{V}$,  by Lemma \ref{202211101008} and 
	\begin{equation}\label{202211102125}
	L_1v+\tilde{V}v=\lambda_1 v.		
	\end{equation}
we have $v>0$ a.e. on $\Omega$ and $\lambda \ge \lambda_1$.

Suppose $\lambda_1 < \lambda$ by \eqref{202211122235}, considering \eqref{202211102124} and \eqref{202211102125}, 
\begin{equation*}
	a(\tilde{u},v) =\lambda   \iint_\Omega \tilde{u}v dxdy, \quad
	a(\tilde{u},v) =   \lambda_1 \iint_\Omega \tilde{u}v dxdy,
\end{equation*} 
thereby 
\begin{equation*}
	(\lambda_1-\lambda)\iint_\Omega \tilde{u}v dxdy=0.
\end{equation*}
This together with Lemma \ref{202211122210} tell us that $\tilde{u}v=0$ a.e., and given that $v>0 $ a.e.\! and $\tilde{u}$ is non-negative by Lemma \ref{202211122210} again, we obtain 
$\tilde{u}=0$ a.e., which is contradictory to the fact that $\|\tilde{u}\|_{L^2(\Omega)}=1$.	
\end{proof}

\begin{lemma}\label{202212261030}
 The function $J(\cdot)$ is Gateaux-differentiable, i.e., for any $\psi \in \mcH_0^1(\Omega) \cap  L^{2q}(\Omega)$ we have
\begin{equation*}
	J_{\psi}'(u)=\frac{2}{\|u\|_{L^2(\Om)}^2}\left(a_1(u,\psi)+M\left\|u^2\right\|_{L^q(\Omega)}^{1-q}\iint_{\Omega} |u|^{2q-2}u\psi dxdy-J(u)\iint_{\Omega} u\psi dxdy   \right).
\end{equation*}	
\end{lemma}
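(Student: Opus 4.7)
The plan is to write $J(u)=N(u)/D(u)$ with $N(u)=a_1(u,u)+M\|u\|_{L^{2q}(\Omega)}^2$ and $D(u)=\|u\|_{L^2(\Omega)}^2$, and apply the quotient rule. Using
\[
J(u+t\psi)=\frac{N(u+t\psi)}{D(u+t\psi)},
\]
one gets $J'_\psi(u)=\dfrac{N'_\psi(u)-J(u)D'_\psi(u)}{D(u)}$ as soon as $N$ and $D$ are Gateaux differentiable at $u$ in direction $\psi\in U$. The two easy pieces are $D$ and $a_1$: since both are quadratic forms, a direct expansion gives $D'_\psi(u)=2\iint_\Omega u\psi\,dxdy$ and $\frac{d}{dt}a_1(u+t\psi,u+t\psi)\big|_{t=0}=2a_1(u,\psi)$, which produces the first and last terms in the claimed formula after dividing by $D(u)$.

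The nontrivial piece is the $L^{2q}$ norm term; I would handle it as a composition. Write $f(t)=\iint_\Omega|u+t\psi|^{2q}\,dxdy$, so that $\|u+t\psi\|_{L^{2q}(\Omega)}^2=f(t)^{1/q}$. Since $2q>1$, the scalar map $r\mapsto|r|^{2q}$ is $C^1$ with derivative $2q|r|^{2q-2}r$, so pointwise a.e.
\[
\frac{|u+t\psi|^{2q}-|u|^{2q}}{t}\;\longrightarrow\;2q|u|^{2q-2}u\,\psi\qquad (t\to 0).
\]
The main obstacle is justifying the interchange of limit and integral here: I would use the mean value theorem to dominate the difference quotient for $|t|\le 1$ by $2q(|u|+|\psi|)^{2q-1}|\psi|$, which by the binomial estimate is bounded by $C(|u|^{2q-1}+|\psi|^{2q-1})|\psi|$. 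Hölder's inequality with conjugate exponents $\tfrac{2q}{2q-1}$ and $2q$, applied to $u,\psi\in L^{2q}(\Omega)$, shows this dominating function lies in $L^1(\Omega)$, so dominated convergence yields $f'(0)=2q\iint_\Omega|u|^{2q-2}u\psi\,dxdy$.

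Then the chain rule gives
\[
\left.\frac{d}{dt}\right|_{t=0}f(t)^{1/q}=\tfrac{1}{q}f(0)^{\frac{1}{q}-1}f'(0)=2\,\|u\|_{L^{2q}(\Omega)}^{2-2q}\iint_\Omega|u|^{2q-2}u\psi\,dxdy,
\]
and using $\|u\|_{L^{2q}(\Omega)}^{2-2q}=\bigl(\iint_\Omega|u|^{2q}\bigr)^{(1-q)/q}=\|u^2\|_{L^q(\Omega)}^{1-q}$ recovers exactly the middle term of the stated formula once multiplied by $M$. Assembling $N'_\psi(u)=2a_1(u,\psi)+2M\|u^2\|_{L^q(\Omega)}^{1-q}\iint_\Omega|u|^{2q-2}u\psi\,dxdy$ with the quotient rule above and factoring out the $2$ produces the expression in the statement. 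Finally, to promote the directional computation to Gateaux differentiability one must check that $J'_\psi(u)$ is linear and continuous in $\psi\in U$: linearity is immediate from the formula, while continuity follows from $a_1(u,\cdot)$ being bounded on $\mcH_0^1(\Omega)$ and from another application of Hölder's inequality to the $L^{2q}$ term (using $|u|^{2q-1}\in L^{2q/(2q-1)}$), which closes the argument.
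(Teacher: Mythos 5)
Your proof is correct and follows essentially the same route as the paper's: a term-by-term computation of the directional derivative, with the quadratic pieces ($D$ and $a_1$) handled directly and the $L^{2q}$-norm term handled via the chain rule. The only organizational difference is that you package the quotient as $J=N/D$ and invoke the quotient rule, whereas the paper manipulates the full difference quotient for $J(u+t\psi)$ directly; these are equivalent. Where you genuinely improve on the paper is at the step corresponding to its display
\[
\lim_{t\to 0}\frac{\|u+t\psi\|_{L^{2q}(\Omega)}^{2}-\|u\|_{L^{2q}(\Omega)}^{2}}{t}
=2\left(\iint_\Omega|u|^{2q}\,dxdy\right)^{\frac{1}{q}-1}\iint_\Omega|u|^{2q-2}u\psi\,dxdy,
\]
which the paper merely asserts with ``it is not difficult to find.'' You actually justify the interchange of limit and integral: the pointwise differentiability of $r\mapsto|r|^{2q}$, the mean-value bound $2q(|u|+|\psi|)^{2q-1}|\psi|$ on the difference quotient for $|t|\le 1$, the binomial estimate reducing it to $C(|u|^{2q-1}+|\psi|^{2q-1})|\psi|$, and Hölder with exponents $\tfrac{2q}{2q-1}$ and $2q$ to put the dominating function in $L^1(\Omega)$. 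That dominated-convergence argument is precisely what is needed to make the paper's one-line claim rigorous, and the rest of your assembly and the final continuity check in $\psi$ match the paper's.
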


\begin{proof}
	Consider 
	\begin{equation}\label{202212261607}
\lim\limits_{t \to 0} \frac{J(u+t\psi)-J(u)}{t}=\lim\limits_{t \to 0}\frac{1}{t}\left(\frac{a_1(u+t\psi,u+t\psi)+M\|u+t\psi\|_{L^{2q}(\Omega)}^2}{\|u+t\psi\|_{L^2(\Omega)}^2}-\frac{a_1(u,u)+M\|u\|_{L^{2q}(\Omega)}^2}{\|u\|_{L^2(\Omega)}^2}\right),	
	\end{equation}
where 
\begin{equation}\label{202212261608}
\lim\limits_{t \to 0}\frac{1}{t}\left(\frac{a_1(u+t\psi,u+t\psi)}{\|u+t\psi\|_{L^2(\Omega)}^2}-\frac{a_1(u,u)}{\|u\|_{L^2(\Omega)}^2}\right)=\frac{2a_1(u,\psi)}{\|u\|_{L^2(\Omega)}^2}-\frac{2a_1(u,u)\iint_{\Omega}u\psi dxdy}{\|u\|_{L^2(\Omega)}^4}.
\end{equation}	
Moreover,
\begin{equation}\label{202212261609}
	\begin{split}	
&\lim\limits_{t \to 0}\frac{1}{t}\left(	\frac{M\|u+t\psi\|_{L^{2q}(\Omega)}^2}{\|u+t\psi\|_{L^2(\Omega)}^2}-\frac{M\|u\|_{L^{2q}(\Omega)}^2}{\|u\|_{L^2(\Omega)}^2}\right)\\
&=\lim\limits_{t \to 0} \frac{M\|u+t\psi\|_{L^{2q}(\Omega)}^2 \|u\|_{L^2(\Omega)}^2- M\|u\|_{L^{2q}(\Omega)}^2\|u\|_{L^2(\Omega)}^2}   {t \|u+t\psi\|_{L^2(\Omega)}^2  \|u\|_{L^2(\Omega)}^2}\\
&\quad -\lim\limits_{t \to 0}  \frac{2tM \|u\|_{L^{2q}(\Omega)}^2 \iint_{\Omega} u\psi dxdy+t^2M \|u\|_{L^{2q}(\Omega)}^2 \|\psi\|_{L^2(\Omega)}^2 }{t \|u+t\psi\|_{L^2(\Omega)}^2  \|u\|_{L^2(\Omega)}^2}\\
&=\lim\limits_{t \to 0} \frac{M\|u+t\psi\|_{L^{2q}(\Omega)}^2 - M\|u\|_{L^{2q}(\Omega)}^2} {t \|u+t\psi\|_{L^2(\Omega)}^2  }-\frac{2M \|u\|_{L^{2q}(\Omega)}^2 \iint_{\Omega} u\psi dxdy}{\|u\|_{L^2(\Omega)}^4}.
\end{split}
\end{equation}
It is not difficult to find that 
\begin{equation}\label{202212261610}
\lim\limits_{t \to 0} \frac{\|u+t\psi\|_{L^{2q}(\Omega)}^2- \|u\|_{L^{2q}(\Omega)}^2}{t}=2 \left( \iint_{\Omega}|u|^{2q}dxdy \right)^{\frac{1}{q}-1} \iint_\Omega |u|^{2q-2}u\psi dxdy.	
\end{equation}
Combining equations \eqref{202212261607} \eqref{202212261608} \eqref{202212261609} \eqref{202212261610}, we have
\begin{equation*}
	\begin{split}
		&\lim\limits_{t \to 0} \frac{J(u+t\psi)-J(u)}{t}\\
		&= \frac{2}{\|u\|_{L^2(\Omega)}^2} \left(a_1(u,\psi)-\iint_{\Omega}u\psi dxdy \frac{a_1(u,u)+M\|u\|_{L^{2q}(\Omega)}^2}{\|u\|_{L^2(\Omega)}^2}+ M\left\|u^2\right\|_{L^q(\Omega)}^{1-q}\iint_{\Omega} |u|^{2q-2}u\psi dxdy \right)\\
		&\triangleq (J'(u),\psi).		
	\end{split}
\end{equation*}
 Though simple calculation, we can verify that $(J'(u),\psi) \le C\|\psi\|_{\mcH(\Omega)}$ for any $\psi \in \mcH_0^1(\Omega) \cap  L^{2q}(\Omega)$, where $\|\cdot\|_{\mcH(\Omega)}=\|\cdot\|_{\mcH_0^1(\Omega)}+\|\cdot\|_{L^{2q}(\Omega)}$. Therefore, the desired result is proved.	
\end{proof}

\begin{theorem}\label{T4.5}
	Let $1<p< \infty$ and let us denote by $q$ its conjugate $(\frac{1}{p}+ \frac{1}{q}=1)$. Then the maximum of $\lambda_1(V)$ in the class $S_p$ is achieved by the function
	\begin{equation}\label{202212192128}
		V_0=M\left\|u_0^2\right\|_{L^q}^{1-q}|u_0|^{2(q-1)}
	\end{equation}
where $u_0$	is the minimizer in $\mcH_0^1(\Omega) \cap  L^{2q}(\Omega) \subset H_0(\Omega)$ of $J(u)$. The function $u_0$ can also be characterized as the first eigenfunction of the eigenvalue problem{\rm :}
\begin{equation}
	L_1 u+V_0u=\lambda_1(V_0)u,
\end{equation}
with $\lambda_1(V_0)=J(u_0)=\sup\limits_{V\in S_p} \lambda_1(V)$.	Furthermore, $V_0$ is the unique maximizer.
\end{theorem}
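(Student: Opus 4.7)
The plan is to identify $V_0$ as the Lagrange multiplier that arises when minimizing $J$, couple Lemma \ref{202211131121} with the upper bound \eqref{202211122235} to pin down $\sup_{V\in S_p}\lambda_1(V)$, and then exploit the equality case in H\"older's inequality to secure uniqueness.

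For existence and characterization, Lemma \ref{202211122210} produces a non-negative minimizer $u_0\in U:=\mcH_0^1(\Omega)\cap L^{2q}(\Omega)$ of $J$, which I would normalize by $\|u_0\|_{L^2(\Omega)}=1$. Setting $J'_{\psi}(u_0)=0$ for every $\psi\in U$ in Lemma \ref{202212261030} yields the weak form of
\begin{equation*}
L_1 u_0 + V_0\,u_0 = J(u_0)\,u_0,\qquad V_0:=M\|u_0^2\|_{L^q(\Omega)}^{1-q}\,u_0^{2(q-1)}.
\end{equation*}
Using $\|u_0^2\|_{L^q}=\|u_0\|_{L^{2q}}^2$ and the conjugate identity $p(q-1)=q$, a direct computation gives $\|V_0\|_{L^p(\Omega)}^p = M^p\|u_0\|_{L^{2q}}^{-2q}\cdot\|u_0\|_{L^{2q}}^{2q}=M^p$, so $V_0\in S_p$. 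Applying Lemma \ref{202211131121} to $(u_0,V_0)$ yields $\lambda_1(V_0)=J(u_0)$, and combined with \eqref{202211122235} this forces
\begin{equation*}
\sup_{V\in S_p}\lambda_1(V)=J(u_0)=\lambda_1(V_0).
\end{equation*}
Since $u_0$ is a non-negative eigenfunction for $\lambda_1(V_0)$, Lemma \ref{202211101008}(4) identifies it, up to a positive scalar, with the simple, strictly positive first eigenfunction.

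For uniqueness, suppose $V^*\in S_p$ is another maximizer. Using $u_0$ as a test function in the Rayleigh characterization of $\lambda_1(V^*)$ and applying H\"older's inequality produces
\begin{equation*}
\lambda_1(V^*)\le\frac{a_1(u_0,u_0)+\iint_\Omega V^*u_0^2\,dxdy}{\|u_0\|_{L^2}^2}\le\frac{a_1(u_0,u_0)+\|V^*\|_{L^p}\|u_0\|_{L^{2q}}^2}{\|u_0\|_{L^2}^2}\le J(u_0)=\lambda_1(V^*).
\end{equation*}
All intermediate inequalities must collapse to equalities; in particular $\|V^*\|_{L^p(\Omega)}=M$, and the sharp form of H\"older forces $(V^*)^p$ to be proportional to $u_0^{2q}$ almost everywhere on $\Omega$. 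Matching the $L^p$-norm pins $V^*$ down uniquely as $V^* = M\|u_0\|_{L^{2q}}^{-2(q-1)}u_0^{2(q-1)}=V_0$.

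The step that requires the most care is the uniqueness argument, which depends crucially on $u_0>0$ almost everywhere on $\Omega$ so that the proportionality produced by the H\"older equality case determines $V^*$ on the whole domain; the needed positivity is furnished by Lemma \ref{202211101008}(4). Secondary but delicate technical points are the exponent bookkeeping in computing $\|V_0\|_{L^p}$ (where one repeatedly uses $p(q-1)=q$) and the admissibility of $u_0$ and $\psi$ as test functions, which is guaranteed by the continuous inclusion $U\subset H_0(\Omega)$ noted in Remark \ref{202211051625}.
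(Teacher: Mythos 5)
Your proposal is correct. The existence and characterization part follows the paper verbatim: take the non-negative minimizer $u_0$ from Lemma \ref{202211122210}, set the Gateaux derivative of Lemma \ref{202212261030} to zero to obtain $L_1u_0+V_0u_0=J(u_0)u_0$ with $V_0=M\|u_0^2\|_{L^q}^{1-q}|u_0|^{2(q-1)}$, check $\|V_0\|_{L^p}=M$ via $p(q-1)=q$, and combine Lemma \ref{202211131121} with \eqref{202211122235} to get $\lambda_1(V_0)=J(u_0)=\sup_{V\in S_p}\lambda_1(V)$. Where you genuinely diverge is uniqueness. The paper first proves that $V\mapsto\lambda_1(V)$ is concave (an infimum of functionals affine in $V$), deduces from simplicity of $\lambda_1$ that two maximizers $V_1,V_2$ must share the same normalized first eigenfunction $u_1=u_2$ (otherwise the midpoint potential produces a strict inequality), and then subtracts the two eigenvalue equations to get $\iint_\Omega(V_1-V_2)u_1\varphi\,dxdy=0$, hence $V_1=V_2$ by positivity of $u_1$. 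You instead test the Rayleigh quotient of any maximizer $V^*$ with the fixed minimizer $u_0$ and squeeze the chain $\lambda_1(V^*)\le R[u_0;V^*]\le J(u_0)=\lambda_1(V^*)$, so the H\"older step is an equality; its equality case gives $(V^*)^p/M^p=u_0^{2q}/\|u_0\|_{L^{2q}}^{2q}$ a.e., which together with $\|V^*\|_{L^p}=M$ yields $V^*=V_0$ directly. Your route is more elementary and immediately produces the explicit formula for every maximizer without needing the concavity of $\lambda_1$; the paper's convexity-plus-simplicity argument is the more standard and more flexible one (it would survive in settings where no closed-form extremal potential is available). One small remark: the strict positivity of $u_0$ that you flag as essential is not actually needed for the H\"older equality case to determine $V^*$ (proportionality of $(V^*)^p$ and $u_0^{2q}$ already fixes $V^*$ everywhere once $V^*\not\equiv 0$, which follows from $\|V^*\|_{L^p}=M>0$); it is the paper's argument, not yours, that leans on $u_1>0$ at the final step.
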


\begin{proof}
	From the Lemma \ref{202211122210}, $\inf\limits_{u \in U} J(u)$ is attained, recorded by $u_0$, and the minimizers is non-negative. Then taking the supremum in \eqref{202211122235} one obtains
	\begin{equation}\label{202211131122}
	\sup_{V\in S_p}	\lambda_1(V)\le J(u_0).
	\end{equation}

Consider $u_0 \in \mcH_0^1(\Omega) \cap  L^{2q}(\Omega)$, note that the function $J(\cdot)$ is Gateaux-differentiable by Lemma \ref{202212261030},  for any $\psi \in \mcH_0^1(\Omega) \cap  L^{2q}(\Omega)$ we have 
\begin{equation*}
	J_{\psi}'(u_0)=\frac{2}{\|u_0\|_{L^2(\Om)}^2}\left(a_1(u_0,\psi)+M\|u_0^2\|_{L^q(\Omega)}^{1-q}\iint_{\Omega} |u_0|^{2q-2}u_0\psi dxdy-J(u_0)\iint_{\Omega} u_0\psi dxdy   \right).
\end{equation*}
Hence a minimizer $u_0 \ge 0$ solves the equation $J_\psi'(u_0)=0$, i.e., 
\begin{equation*}
L_1 u_0+ V_0 u_0 =\lambda(V_0) u_0,\ u_0 \in \mcH_0^1(\Omega) \cap  L^{2q}(\Omega)
\end{equation*}
with $\lambda(V_0)=J(u_0)$ and $V_0=M\|u_0^2\|_{L^q}^{1-q} {|u_0|}^{2(q-1)}$, 
in other words,
\begin{equation}\label{202211132159}
a_1(u_0,\omega)+ \iint_{\Omega} V_0 u_0 \omega dxdy =\lambda(V_0) \iint_{\Omega} u_0 \omega dxdy \ \mbox{for any} \ \omega \in \mcH_0^1(\Omega) \cap  L^{2q}(\Omega).
\end{equation}

Through direct verification, $\|V_0\|_{L^p(\Omega)}=M$ can be obtained, so that $V_0 \in S_p$. Taking advantage of Lemma \ref{202211131121}  and the inequality \eqref{202211131122}, we readily find out that 
$\lambda_1(V_0)=J(u_0)=\sup\limits_{V\in S_p} \lambda_1(V)$.

Next we prove the uniqueness of the $V_0$, to show this, we require to first explain that $\lambda_1(V)$ is concave. Let 
\begin{equation*}
	R[u;V]=\frac{a_1(u,u)+\iint_\Omega Vu^2dxdy}{\iint_\Omega |u|^2dxdy},
\end{equation*}
recalling the definition of $\lambda_1$, for any $V_1, V_2 \in S_p$ and $0<t<1$,
\begin{equation*}
	\begin{split}
	\lambda_1(tV_1+(1-t)V_2)
	&= \inf_{\substack{u\in H_0(\Omega) \\  u \neq 0 }} \frac{a_1(u,u)+t\iint_\Omega V_1u^2dxdy+(1-t)\iint_\Omega V_2u^2dxdy}{\iint_\Omega |u|^2dxdy},\\
	&= \inf_{\substack{u\in H_0(\Omega) \\  u \neq 0 }} \left(tR[u;V_1]+(1-t)R[u;V_2]\right)
	\ge  t\lambda_1(V_1)+(1-t)\lambda_1(V_2).
	\end{split}
\end{equation*}
Suppose  that $V_1$ and $V_2$ are maximizing functions,  owing to the concavity we know that $V_3=\frac{1}{2}(V_1+V_2)$ is also a maximizing function. Let $u_1$, $u_2$ and $u_3$ denote their normalized first eigenfunctions, respectively. Clearly, unless $u_1=u_2=u_3$, since $\lambda_{1}$ is simple, we get  a contradiction in the following
\begin{equation*}
	\lambda_1^*=\lambda_1(V_3)=R[u_3;V_3]=\frac{1}{2}(R[u_3;V_1]+R[u_3;V_2])>\frac{1}{2}(\lambda_1(V_1)+\lambda_1(V_2))= \lambda_1^*. 
\end{equation*}
Hence, $u_1=u_2$. Consider that
\begin{equation*}
		L_1 u_1+V_1u_1=\lambda_1(V_1) u_1,\quad 
		L_1 u_2+V_2u_2=\lambda_1(V_2) u_2,
\end{equation*} 
we have $\iint_{\Omega} (V_1-V_2)u_1\varphi dxdy=0$ for all $\varphi\in C_0^\infty(\Omega)$, so that $V_1=V_2$ a.e. due to $u_1>0$. 
\end{proof}

\begin{theorem}
	Under the assumption of  Theorem \ref{T4.5}, the maximizing function $V_0$ and the corresponding first eigenfunction $u_0$ satisfy
	\begin{equation}
		\begin{split}
		\|u_0\|_{L^\infty(\Omega)}\le & \ \left(\frac{\lambda_1(V_0)}{M}   \right)^{\frac{p-1}{2}} \|u_0\|_{L^{2q}(\Omega)},	\\
		\|V_0\|_{L^\infty(\Omega)} \le & \ \lambda_1(V_0).		
		\end{split}
	\end{equation}
\end{theorem}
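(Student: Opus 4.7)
The plan is to test the weak form of $L_1 u_0 + V_0 u_0 = \lambda_1(V_0) u_0$ against the truncation $(u_0 - k^*)^+$ for a sharply chosen threshold $k^*$. The central observation is that, since $q>1$, the formula $V_0 = M \|u_0^2\|_{L^q}^{1-q} u_0^{2(q-1)}$ makes $V_0$ a strictly increasing function of $u_0$, so $V_0 > \lambda_1(V_0)$ holds precisely on $\{u_0 > k^*\}$, where, using the H\"older conjugacy identity $(p-1)(q-1)=1$ (equivalently $\frac{1}{2(q-1)} = \frac{p-1}{2}$), one finds
\[
k^* := \left(\frac{\lambda_1(V_0)}{M}\right)^{\frac{p-1}{2}} \|u_0\|_{L^{2q}(\Omega)}.
\]
So showing the first inequality amounts to showing $|\{u_0 > k^*\}|=0$.

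Next, I would verify that $\varphi := (u_0 - k^*)^+$ is admissible in $H_0(\Omega)$. Approximating $u_0$ by $u_n \in C_0^\infty(\Omega)$ in $\mcH^1(\Omega)$ produces a sequence with $(u_n - k^*)^+ \in H_0^1(\Omega) \subset \mcH_0^1(\Omega)$ converging to $\varphi$ in $\mcH^1(\Omega)$ (via the argument mirroring Lemma \ref{202301112303}(3)), hence $\varphi \in \mcH_0^1(\Omega)$; the pointwise bound $0 \le \varphi \le u_0$ then upgrades this to $\varphi \in H_0(\Omega)$. Substituting $\varphi$ into the weak formulation \eqref{202210282232} for the eigenvalue equation, and using $\partial \varphi = \partial u_0 \cdot \chi_{\{u_0 > k^*\}}$, yields
\[
\iint_{\{u_0 > k^*\}} \left(y^{2\alpha_1}|\partial_x u_0|^2 + x^{2\alpha_2}|\partial_y u_0|^2\right) dxdy + \iint_{\{u_0 > k^*\}} (V_0 - \lambda_1(V_0)) u_0 (u_0 - k^*) dxdy = 0.
\]
On $\{u_0 > k^*\}$ every factor is non-negative and the three factors in the second integrand are strictly positive, so both integrals must vanish, forcing $|\{u_0 > k^*\}| = 0$; this gives the first inequality $\|u_0\|_{L^\infty(\Omega)} \le k^*$.

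The second bound then drops out directly. Using $\|u_0^2\|_{L^q} = \|u_0\|_{L^{2q}}^2$ and the just-proven first bound,
\[
\|V_0\|_{L^\infty(\Omega)} = M \|u_0\|_{L^{2q}}^{2(1-q)} \|u_0\|_{L^\infty(\Omega)}^{2(q-1)} \le M \|u_0\|_{L^{2q}}^{2(1-q)} (k^*)^{2(q-1)} = \lambda_1(V_0),
\]
where the last equality is once again the conjugacy identity $(p-1)(q-1)=1$. The main obstacle, I expect, is justifying $(u_0 - k^*)^+ \in H_0(\Omega)$ in the degenerate weighted setting: Lemma \ref{202301112303} handles positive parts for elements of $\mcH_0^1(\Omega)$, but the constant $k^*$ does not itself belong to $\mcH_0^1(\Omega)$, so the truncation-by-a-positive-constant operation must be vindicated by running an approximating sequence in $C_0^\infty(\Omega)$ through the truncation and passing to the limit in the weighted norm (plus the $V_0$-weighted $L^2$-term, which is controlled automatically by $0 \le \varphi \le u_0$).
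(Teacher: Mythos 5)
Your proposal is correct and follows essentially the same route as the paper: both test the weak Euler--Lagrange equation for $u_0$ with the truncation $(u_0-c)^+$ at exactly the threshold $c=\left(\frac{\lambda_1(V_0)}{M}\right)^{\frac{p-1}{2}}\|u_0\|_{L^{2q}(\Omega)}$, use that $V_0>\lambda_1(V_0)$ precisely where $u_0>c$ to force the level set $\{u_0>c\}$ to be null, and then obtain the bound on $V_0$ algebraically from the formula $V_0=M\|u_0^2\|_{L^q}^{1-q}|u_0|^{2(q-1)}$. Your extra care in justifying $(u_0-c)^+\in\mcH_0^1(\Omega)\cap L^{2q}(\Omega)$ by approximation is a welcome detail that the paper simply asserts, but it does not change the argument.
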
        

\begin{proof}
	Let $c=\left(\frac{\lambda_1(V_0)}{M}   \right)^{\frac{p-1}{2}} \|u_0\|_{L^{2q}(\Omega)}$, and set $\zeta= u_0 -\min\{u_0,c\}$. Note that $\zeta\ge 0$ and $\zeta \in \mcH_0^1(\Omega) \cap  L^{2q}(\Omega)$,  and by \eqref{202211132159}
	\begin{equation}\label{202211132217}
		a_1(\zeta,\zeta)=a_1(u_0,\zeta)=\iint_{\Omega} \left(\lambda_1(V_0)-M\|u_0^2\|_{L^q}^{1-q}{|u_0|}^{2(q-1)} \right) u_0 \zeta dxdy.
	\end{equation}
If $\zeta>0$, that is, $u_0>c$, it is not hard to find that 
\begin{equation*}
	M\|u_0^2\|_{L^q}^{1-q}{|u_0|}^{2(q-1)} > M\|u_0^2\|_{L^q}^{1-q} c^{2(q-1)} =\lambda_1(V_0)
\end{equation*}
on the set $\{x\in\Omega\mid \zeta(x)>0\}$, this implies $a_1(\zeta,\zeta)<0$ from \eqref{202211132217}. However, $a_1(\zeta,\zeta)\ge 0$, which is a contradiction.  All these tell us that $\zeta =0$, furthermore, we have $u_0 \le c$. 

The estimate of $V_0$ \eqref{202212192128} can be obtained by applying the estimate of $u_0$. 	 
\end{proof}

\section{The fundamental gap}
The aim of this section is mainly to depict the optimal potentials over the class $S_p$, $p > 2$, some of theories are inspired by \cite{Ashbaugh1991ON,karaa1998extremal}. Before that, we first offer the existence for optimal function.

\begin{theorem}\label{T5.1}
The fundamental gap $\Gamma(V)$	attains its minimum in the classes of $S_p$ by $V^*$, $p >2$.
\end{theorem}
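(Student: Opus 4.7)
The plan is to apply the direct method of the calculus of variations to $\Gamma$ over the convex set $S_p$. First I would select a minimizing sequence $\{V_n\} \subset S_p$ with $\Gamma(V_n) \to \inf_{V \in S_p} \Gamma(V)$. Since $1 < p < \infty$ makes $L^p(\Omega)$ reflexive and $S_p$ is a convex, strongly closed subset of the $L^p$-ball, it is weakly sequentially compact; thus a subsequence satisfies $V_n \rightharpoonup V^*$ weakly in $L^p(\Omega)$ with $V^* \in S_p$. The proof then reduces to establishing continuity of the first two eigenvalues under weak $L^p$-convergence of the potential.

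The crucial structural observation is that $p > 2$ forces $q < 2$, so $2q < 4$, and Theorem \ref{202210121135} gives a compact embedding $\mcH_0^1(\Omega) \hookrightarrow L^{2q}(\Omega)$. For the upper bound $\limsup_n \lambda_k(V_n) \le \lambda_k(V^*)$ with $k = 1, 2$, I would insert the two-dimensional space $E = \mathrm{span}\{u_1^*, u_2^*\}$ spanned by the first two $L^2$-normalized eigenfunctions of $V^*$ into the min-max formula \eqref{202211051630}. For each fixed $u \in E$ one has $u^2 \in L^q(\Omega)$, so $\iint_\Omega V_n u^2\,dxdy \to \iint_\Omega V^* u^2\,dxdy$ by weak $L^p$-convergence, and on the finite-dimensional unit sphere of $E$ this passage to the limit is uniform.

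For the matching lower bound $\liminf_n \lambda_k(V_n) \ge \lambda_k(V^*)$, I would take $L^2$-normalized eigenfunctions $u_n^{(k)}$ of $V_n$ and use $a_1(u_n^{(k)}, u_n^{(k)}) \le \lambda_k(V_n) \le C$ to extract subsequences that converge weakly in $\mcH_0^1(\Omega)$ and (by Theorem \ref{202210121135}) strongly in every $L^m(\Omega)$, $m \in [1, 4)$, to limits $u^{(k)}$. The nonlinear coupling passes to the limit via the splitting
\begin{equation*}
\iint_\Omega V_n (u_n^{(k)})^2\,dxdy = \iint_\Omega V_n (u^{(k)})^2\,dxdy + \iint_\Omega V_n \bigl((u_n^{(k)})^2 - (u^{(k)})^2\bigr)dxdy,
\end{equation*}
where the first term converges by weak $L^p$-convergence of $V_n$ against the fixed test function $(u^{(k)})^2 \in L^q(\Omega)$, and the second vanishes because $V_n$ is bounded in $L^p(\Omega)$ while $(u_n^{(k)})^2 - (u^{(k)})^2 \to 0$ strongly in $L^q(\Omega)$ (from strong $L^{2q}$-convergence of $u_n^{(k)}$). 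Combined with weak lower semicontinuity of $a_1$, this yields $a_1(u^{(k)}, u^{(k)}) + \iint_\Omega V^* (u^{(k)})^2\,dxdy \le \liminf_n \lambda_k(V_n)$, while $\|u^{(k)}\|_{L^2(\Omega)} = 1$ is preserved by strong $L^2$-convergence. For $k = 1$ the variational characterization of $\lambda_1(V^*)$ finishes the argument. For $k = 2$, the orthogonality $\iint u_n^{(1)} u_n^{(2)}\,dxdy = 0$ passes to the $L^2$-strong limit, and the $k = 1$ analysis already identifies $u^{(1)}$ as a minimizer for $\lambda_1(V^*)$, which by the simplicity of $\lambda_1(V^*)$ (Lemma \ref{202211101008}(4)) spans the entire first eigenspace; hence $u^{(2)}$ is admissible in the variational characterization of $\lambda_2(V^*)$.

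The principal obstacle I anticipate is the $\lambda_2$ analysis: specifically, I must verify that the weak limit $u^{(1)}$ is a genuine first eigenfunction of $V^*$ (so that simplicity provides the correct orthogonality for $u^{(2)}$), and I must handle the nonlinear product $V_n u_n^{(k)} u_n^{(k)}$ while only weak convergence of $V_n$ is available, which is exactly where $p > 2$ is indispensable. Once both eigenvalues are shown to be continuous, $\Gamma(V_n) \to \Gamma(V^*)$ and $V^*$ realizes $\inf_{V \in S_p}\Gamma(V)$, completing the proof.
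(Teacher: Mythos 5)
Your proposal is correct and rests on the same direct-method skeleton as the paper: a minimizing sequence $V^k$, weak $L^p$ compactness of the convex closed set $S_p$, uniform $\mcH_0^1(\Omega)$-bounds on the normalized eigenfunctions, and the compact embedding of Theorem \ref{202210121135} combined with $2q=\frac{2p}{p-1}<4$ (the precise point where $p>2$ is used) to control the potential terms. Where you genuinely diverge is in how the limiting eigenvalues are identified. The paper never proves upper semicontinuity of $\lambda_1,\lambda_2$: it passes to the limit in the weak form of the eigenvalue equation \eqref{202212121152} to conclude that the weak limits $u_j^*$ are eigenfunctions of $V^*$ with eigenvalues $\lambda_j^*$, identifies $\lambda_1^*=\lambda_1(V^*)$ from the a.e.\ convergence and non-negativity of $u_1^k$, and uses the sign change of $u_2^*$ forced by $L^2$-orthogonality to get only the one-sided inequality $\lambda_2^*\ge\lambda_2(V^*)$, which already gives $\inf_{S_p}\Gamma=\lambda_2^*-\lambda_1^*\ge\Gamma(V^*)$. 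You instead establish two-sided continuity: the $\limsup$ bound by inserting $\mathrm{span}\{u_1^*,u_2^*\}$ into the min-max formula \eqref{202211051630} and letting $V^k\rightharpoonup V^*$ act on the fixed $L^q$ functions $u_i^*u_j^*$, and the $\liminf$ bound by your splitting of $\iint_\Omega V^k(u_k^{(j)})^2dxdy$ together with weak lower semicontinuity of $a_1$, with the equality case in the Rayleigh quotient plus simplicity of $\lambda_1(V^*)$ (Lemma \ref{202211101008}) supplying the orthogonality needed for the $\lambda_2$ estimate. Your route costs an extra step (the min-max upper bound) but buys the stronger conclusion $\Gamma(V^k)\to\Gamma(V^*)$, and it replaces the paper's passage to the limit in the equation and its positivity-based identification of the first eigenfunction by purely variational arguments; the paper's argument is shorter because only the one-sided inequality for $\lambda_2$ is needed. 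Both arguments, yours explicitly and the paper's implicitly, use that $S_p$ is convex and strongly closed, hence weakly closed, so that $V^*\in S_p$.
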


\begin{proof}
Firstly, we obtain that $\mcH_0^1(\Omega)$ is compactly embeded in $L^m(\Omega)$, $m=[1,4)$ from Theorem  \ref{202210121135}. Actually, at this situation,  we deduce that for all $u\in \mcH_0^1(\Omega)$ and $V \in L^p(\Omega)$, $p >2$, we have $\iint_{\Omega}Vu^2dxdy$ is bounded from $\|V\|_{L^p(\Omega)}\|u\|_{L^{2q}(\Omega)}^2$, where $2q=\frac{2p}{p-1}\in (2,4)$. Now utilizing the min-max formulae, it is easy to see that $\lambda_j(V)$ is uniformly bounded on $S_p$, $j=1,2$. Furthermore, we observe that
\begin{equation*}
	\|u\|_{\mcH_0^1(\Omega)} \le \|u\|_{H_0(\Omega)} \le (M+1)^{\frac{1}{2}}\|u\|_{\mcH_0^1(\Omega)}
\end{equation*}
by Theorem \ref{202210121135},
in other words, the norm $\mcH_0^1(\Omega)$ is equivalent to the norm $H_0(\Omega)$.
	 
Let $\{V^k\}_{k\in \mathbb{N}} \in S_p$ be the minimization sequence of $\Gamma(V)$, $p> 2$, i.e.,
\begin{equation*}
	\Gamma(V^k)\downarrow \inf\limits_{V\in S_p } \Gamma(V)= \Gamma^*. 
\end{equation*}
Let us denote by  $\lambda_j^k=\lambda_j(V^k)\ (j=1,2)$ and $u_j^k \ (j=1,2)$ the sequence of corresponding eigenvalues and eigenfunctions. As usual, $u_j^k$ is normalized by  $\|u_j^k\|_{L^2(\Omega)}= 1$. Up to a subsequence, for which we keep the index $k$, by the definition of  $S_p$ and \eqref{202211122235}, we can assume that
\begin{equation}\label{202212121516}
	V^k \rightharpoonup V^* \mbox{ in } L^p(\Omega), \quad  \lambda_j^k \to  \lambda_j^*.
\end{equation}		 
Now, since
\begin{equation*}
	\|u_j^k\|_{\mcH_0^1(\Omega)}^2 \le \iint_{\Omega} |y^{\alpha_1}{\partial_{x}u_j^k}|^2+|x^{\alpha_2}{\partial_{y}u_j^k}|^2+V^k|u_j^k|^2dxdy=\lambda_j^k \le C,
\end{equation*}
this implies that $u_j^k$ is also bounded in $\mcH_0^1(\Omega)$ and we can assume that
\begin{equation}\label{202212121517}
	u_j^k \rightharpoonup u^* \mbox{ in } \mcH_0^1(\Omega),
\end{equation}
and 
\begin{equation}\label{202212121518}
	u_j^k \to u^* \mbox{ in } L^{m}(\Omega), \ m \in [1,4).
\end{equation}

 For each $j$ we know for all  $v \in \mcH_0^1(\Omega)$                  
\begin{equation}\label{202212121152}
	{\iint_\Omega y^{2\alpha_1}\partial_{x}u_j^k\partial_{x}v +x^{2\alpha_2}\partial_{y}u_j^k\partial_{y}v+V^ku_j^k v dxdy} =\lambda_j^k \iint_\Omega u_j^k vdxdy, 
\end{equation}
choosing $k$ sufficiently large, according to \eqref{202212121516}, \eqref{202212121517},  and \eqref{202212121518}, hence
\begin{equation}\label{202212121527}
	{\iint_\Omega y^{2\alpha_1}\partial_{x}u_j^*\partial_{x}v +x^{2\alpha_2}\partial_{y}u_j^*\partial_{y}v+V^*u_j^* vdxdy} =\lambda_j^* \iint_\Omega u_j^* vdxdy. 
\end{equation}
This shows that $\lambda_j^k$ converges to an element of the spectrum of the problem \eqref{202301172048} given by $V^*$. Especially, we may extract a subsequence of  $\{u_1^k\}_{k\in \mathbb{N}}\subset L^2(\Omega)$ such that $u_1^k$ is converge to $u_1^*$ a.e. \!by \eqref{202212121518}, owing to the non-negativity of $u_1^k$ (see Lemma \ref{202211101008} (4)), then $u_1^*$ must be the first eigenfunction and $\lambda_1^*=\lambda_1(V^*)$. Note that $u_2^*$ and $u_1^*$ are orthogonal on $\Omega$, we know that $u_2^*$ must change the sign on $\Omega$, which indicates that  $u_2^*$ not to be the first eigenfunction of $\lambda_1(V^*)$ by Lemma \ref{202211101008}. Hence, we know $\lambda_2^*\ge \lambda_{2}(V^*)$, furthermore,
\begin{equation}
	\Gamma(V^k) \rightarrow \lambda_2^*-\lambda_1^*\ge \Gamma(V^*), k \rightarrow \infty,
\end{equation}
that is, $\Gamma(V^*) \le \inf\limits_{V\in S_p } \Gamma(V)$. Meanwhile,  $\Gamma(V^*) \ge \inf\limits_{V\in S_p } \Gamma(V)$, then  we see that  the minimum value  of $\Gamma(V)$ can be attained on $S_p$, $p >2$.
\end{proof}

\begin{definition}[\cite{Ashbaugh1991ON}]\label{202301161609}
	A real-valued, measurable and bounded  function $P(x)$ on $\Omega$ is called an {\it admissible perturbation} of $V(x)$ if and only if ${\rm dist}(V+tP,S_p)=o(t)$, i.e.,
	\begin{equation*}
		\inf_{\hat V\in S_p}\|V+tP-\hat V\|=o(t). 
	\end{equation*} 
It is {\it strongly admissible}  if and only if $V(x)+tP(x) \in S_p$ for any sufficiently small $t$, where $t$ can only be non-negative or non-positive, or it can be any sign. An admissible perturbation is thus either strongly admissible or tangential to $\partial{S_p}$.
A perturbations admissible for both positive and negative small $t$ are {\it tangential} to $\partial{S_p}$ in $L^p$ $(p>2)$ in the sense that 
\begin{equation}\label{202212132213}
	\iint_{\Omega}V^{p-1}P(x)dxdy=0.
\end{equation}
\end{definition}

\begin{remark}
Define the functional $\rho:L^p(\Omega) \to \mathbb{R}^+$ by $\rho(V)=\|V\|_{L^p(\Omega)}^p$, $p>2$, the set  $\partial{S_p}$ is a level surface of $\rho$. The functional $\rho$ is Fr\'echet differentiable \cite[Proposition 1.12]{Minimax} and 
\begin{equation}\label{202301121057}
d_V\rho(P):=(\rho'_V,P)=p\iint_\Omega |V|^{p-2}VPdxdy
\end{equation}
for any $P \in L^p(\Omega)$.
Referring  to \cite[(p153-154)]{poschel1987inverse}, we know the surface  $\partial{S_p}$ is real analytic submanifold with tangent space given by 
 	\begin{equation*}
 		T_V\partial{S_p}=Ker d_V \rho,
 	\end{equation*}
together with \eqref{202301121057}, we have 
 \begin{equation*}
	T_V\partial{S_p}=\big\{P \in L^p(\Omega): \iint_\Omega V^{p-1}Pdxdy=0\big\}.
\end{equation*}
\end{remark}

\begin{lemma}[\cite{hislop2012introduction,bookPerturbation,pankov2006introduction,RichardCourant}]\label{202304021038}
If the $\lambda_2(V)$ is simple, for any admissible perturbation $P(x)$, we have    
	\begin{equation}\label{202212121849}
		\left.\frac{d\Gamma(V+tP(x))}{dt}\right|_{t=0}=\iint_{\Omega} P(x)\left(|u_2|^2-|u_1|^2 \right) dxdy,
	\end{equation}
where  $u_i$ is a normalized eigenfunction associated to $\lambda_{i}(V)$, $i=1,2$.
Suppose $\lambda_2(V)$ were $r$ fold degenerate, then for any admissible perturbation $P(x)$, $\lambda_2$ can split into a cluster of eigenvalues $\lambda_{2,m}$, $m=1,2, \cdots, r$, which can be considered as a set of differentiable functions near $t=0$, but those functions do not ordinary correspond to the ordering of eigenvalues given by the min-max principle. For example, the lowest one for $t<0$ will typically be the highest for $t>0$.  And 
\begin{equation}\label{202212141557}
	\left.\frac{d\Gamma_m(V+tP(x))}{dt}\right|_{t=0}=\iint_{\Omega} P(x)\left(|u_{2,j}|^2-|u_1|^2 \right) dxdy,
\end{equation}
where $\Gamma_m=\lambda_{2,m}-\lambda_1$, and the orthonormal eigenfunctions $u_{k,j}$ are chosen so that
\begin{equation*}
\iint_{\Omega} u_{2,j}P u_{2,m}dxdy=0, \ j \neq m. 
\end{equation*}
\end{lemma}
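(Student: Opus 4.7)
The plan is to invoke the analytic perturbation theory of Kato–Rellich for self-adjoint operators, adapted to the sub-elliptic setting via the bilinear form of Definition \ref{202301241118}. Because $P$ is real-valued and bounded, the map $u\mapsto \iint_\Omega Pu^2\,dxdy$ is bounded on $L^2(\Omega)$, hence the form domain $H_0(\Omega)$ for $V+tP$ coincides with that for $V$, and the family
\[
a_t(u,v) := \iint_\Omega (y^{\alpha_1}\pt_x u)(y^{\alpha_1}\pt_x v) + (x^{\alpha_2}\pt_y u)(x^{\alpha_2}\pt_y v) + (V+tP)uv\,dxdy
\]
is closed, symmetric, and affine in $t$ on the fixed space $H_0(\Omega)$. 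Together with the compact embedding $H_0(\Omega)\hookrightarrow L^2(\Omega)$ of Remark \ref{202211051625}, this makes $\{a_t\}$ an analytic family of type (B); the associated self-adjoint operators $L(t)=L_1+(V+tP)$ have purely discrete spectra which vary analytically in $t$.

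\emph{Simple case.} When $\lambda_2(V)$ is simple, Kato–Rellich yields an analytic branch $t\mapsto (\lambda_2(t),u_2(t))$ with $u_2(0)=u_2$, $\|u_2(t)\|_{L^2}=1$, and $a_t(u_2(t),v)=\lambda_2(t)(u_2(t),v)_{L^2}$ for every $v\in H_0(\Omega)$. Differentiating at $t=0$ and testing with $v=u_2$ gives
\[
\iint_\Omega P u_2^2\,dxdy + a_0(\dot u_2,u_2) = \dot\lambda_2(0) + \lambda_2(\dot u_2,u_2)_{L^2}.
\]
The weak eigenvalue equation for $u_2$ supplies $a_0(\dot u_2,u_2)=\lambda_2(\dot u_2,u_2)_{L^2}$, so the $\dot u_2$ terms cancel and $\dot\lambda_2(0)=\iint_\Omega P u_2^2\,dxdy$. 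Applying the identical argument to the simple eigenvalue $\lambda_1$ (Lemma \ref{202211101008}(4)) and subtracting produces \eqref{202212121849}.

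\emph{Degenerate case.} If $\lambda_2(V)$ has multiplicity $r$ with eigenspace $E$, Kato–Rellich provides $r$ analytic branches $\lambda_{2,m}(t)$ emanating from $\lambda_2(V)$. Standard first-order degenerate perturbation theory identifies their derivatives at $t=0$ with the eigenvalues of the real symmetric $r\times r$ matrix
\[
M_{jm} := \iint_\Omega P u_{2,j}u_{2,m}\,dxdy,
\]
where $\{u_{2,j}\}$ is any orthonormal basis of $E$. By the spectral theorem for real symmetric matrices we may choose $\{u_{2,j}\}$ to diagonalise $M$; for this choice $\iint_\Omega P u_{2,j}u_{2,m}=0$ when $j\neq m$, and $\dot\lambda_{2,m}(0)=M_{mm}=\iint_\Omega P u_{2,m}^2\,dxdy$. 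Subtracting $\dot\lambda_1(0)=\iint_\Omega P u_1^2\,dxdy$ yields \eqref{202212141557}. The remark about the min–max ordering of the $\lambda_{2,m}(t)$ swapping across $t=0$ is then the familiar phenomenon that a cluster of analytic branches need not respect a globally increasing labelling on both sides of a crossing.

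The main obstacle is the type-(B) verification — namely that each $a_t$ is closed on the common form domain $H_0(\Omega)$ and uniformly coercive there for small $|t|$. Closedness is inherited from $a_0$ since $t\iint_\Omega Puv$ is a bounded symmetric sesquilinear form on $L^2(\Omega)$, and uniform coercivity for $|t|$ small is immediate from the estimate $a_t(u,u)\ge a_0(u,u)-|t|\|P\|_{L^\infty}\|u\|_{L^2}^2$ combined with the coercivity of $a_0$ on $H_0(\Omega)$ already established in the proof of Theorem \ref{202301161746}. With these ingredients in place, the derivative formulas \eqref{202212121849} and \eqref{202212141557} fall out of routine first-order perturbation theory.
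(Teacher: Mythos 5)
Your proposal is correct and arrives at the same first-order formulas as the paper, but via a noticeably different route through the same underlying theory. The paper works at the operator level: it first verifies directly that $L$ with domain $\mcH_0^1(\Omega)$ is self-adjoint and closed, then invokes Rellich's Criterion~3 to establish that $L(t)=L+tP$ is a ``regular'' family, and finally substitutes formal power series $\lambda_k(t)=\lambda_k+t\mu_{k,1}+\cdots$, $u_k(t)=u_k+tv_{k,1}+\cdots$ into $L(t)u(t)=\lambda(t)u(t)$ and matches coefficients, following Courant--Hilbert and Rellich's monograph. You instead work at the quadratic-form level, observing that the symmetric, bounded perturbation $t\iint Pu^2$ leaves the form domain $H_0(\Omega)$ fixed and produces an analytic family of type~(B) in Kato's sense; the compact embedding then guarantees discrete spectrum and analytic eigenbranches, and the derivative formula drops out from differentiating the weak eigenvalue equation and cancelling the $\dot u_2$ contribution via self-adjointness. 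The form approach is cleaner in that it sidesteps operator-domain questions (form domain stability is a weaker hypothesis than operator domain stability) and avoids a direct self-adjointness verification, whereas the paper's power-series computation is more explicit and elementary, and its explicit self-adjointness proof is reusable elsewhere. For the degenerate branch both proofs reduce to diagonalising the real symmetric matrix $M_{jm}=\iint_\Omega P\,u_{2,j}u_{2,m}\,dxdy$ and reading off its eigenvalues as the first-order shifts, so they are interchangeable there. Both are valid; yours is the more compact modern presentation, the paper's is the more self-contained classical one.
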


\begin{proof}
 We will follow the idea of \cite[p343-348]{RichardCourant} to prove the first order perturbation of the eigenvalue. Consider the operator $L=-\left(y^{2\alpha_1}\partial_{x}^2u+x^{2\alpha_2}\partial_{y}^2u\right)+V$, $V \in S_p$, $p>2$, note that $Dom(L)=\mcH_0^1(\Omega)$ is dense in $L^2(\Omega)$, and for any $u, v \in \mcH_0^1(\Omega)$,
\begin{equation*}
	(Lu,v)=(v,Lu),
\end{equation*}
it implies that $L$ is a Hermitian operator. Similarly, for $L(t)=L+tP$, where $P$ is an admissible perturbation as defined in Definition \ref{202301161609}, we observe that $L(t)$ is a Hermitian operator for real small $|t|<\epsilon$. Besides, for  $u \in Dom(L^*)$, then there exists $u^* \in L^2(\Omega)$ such that for any  $v \in \mcH_0^1(\Omega)$, $(u^*,v)=(u,Lv)$, where $L^*$ is the conjugate operator of $L$. Since the range of operator $L$ is $L^2(\Omega)$ (Theorem \ref{202301161746}), there exists a $w \in \mcH_0^1(\Omega)$ such that $u^*=Lw$. It is not hard to see that  $(w,Lv)=(Lw,v)=(u^*,v)=(u,Lv)$, then $u=w \in Dom(L)$. Furthermore, $Dom(L) \subset Dom(L^*)$, we obtain $Dom(L)=Dom(L^*)$. All these reveal that $L$ is self-adjoint. In addition, $L$ is also a closed operator. For $u_n \in \mcH_0^1(\Omega)$, $u_n \to u$ and $Lu_n \to g$ in $L^2(\Omega)$, we will show that $u \in \mcH_0^1(\Omega)$ and $g=Lu$. Indeed, for any $\phi \in C_0^\infty(\Omega)$,
 \begin{equation*}
 	(Lu_n,\phi)=(u_n,L\phi)\to (u,L\phi)=(Lu,\phi),
 \end{equation*} 
 given that $Lu_n \to g$,  $(Lu_n,\phi) \to (g,\phi)$, we obtain $Lu=g$ in $L^2(\Omega)$ and $u \in\mcH_0^1(\Omega)$. Thus the operator $L(0)=L$ on $\mcH_0^1(\Omega)$ is hypermaximal. Consider $P \in L^\infty(\Omega)$, then the operator $L(t)$ on $\mcH_0^1(\Omega)$ is regular in a real neighborhood of $\epsilon=0$ by Criterion 3 \cite[p78]{rellich1969perturbation}. 
 
 From Lemma \ref{202211101008} and the Theorem 3 \cite[p74]{rellich1969perturbation}, we know if $\lambda$ is an eigenvalue of finite multiplicity $r$ of the operator $L$, then there exists power series \cite[p54]{rellich1969perturbation}
\begin{equation*}
	\lambda^{1}(t),\cdots,\lambda^{r}(t)
\end{equation*}
and power series in Hilbert space
\begin{equation*}
u^{1}(t),\cdots,u^{r}(t)
\end{equation*}
all convergent in a neighborhood of $t=0$, which satisfy
\begin{equation}\label{202301171708}
	L(t)u^{i}(t)=\lambda^{i}(t)u^{i}(t), \ i=1,\cdots,r,
\end{equation}
 $\lambda^{i}(0)=\lambda$, $ i=1,\cdots,r$, and $(u^{i}(t),u^{j}(t))=\delta_{i,j}$, $i,j=1,\cdots,r$.  Then we know that if $\lambda_k$ is simple, $k=1,2$,  we may assume
 \begin{equation*}
 	\begin{split}
 \lambda_{k}(t)&=\lambda_{k}+t\mu_{k,1}+t^2\mu_{k,2}+\cdots,\\
  u_{k}(t)&=u_{k}+tv_{k,1}+t^2v_{k,2}+\cdots.		
 	\end{split} 	
 \end{equation*}
Substitute them into \eqref{202301171708}, and given that $Lu_k=\lambda_{k}u_k$, we obtain that
\begin{equation}
	Lv_{k,1}+Pu_k=\mu_{k,1}u_k+\lambda_{k}v_{k,1},
\end{equation}
we multiply above equation by $u_l$ and integrate on $\Omega$, hence
\begin{equation}
\lambda_{l}	\iint_{\Omega} u_l v_{k,1}dxdy -\lambda_{k}	\iint_{\Omega} u_l v_{k,1}dxdy=\mu_{k,1} \iint_{\Omega} u_l u_{k}dxdy -\iint_{\Omega} P u_l u_k dxdy.
\end{equation}
Letting $k=l$, we obtain the perturbations of first order $\mu_{k,1}=\iint_\Omega Pu_k^2dxdy$.

If the eigenvalue $\lambda_2(V)=\lambda$ were $r$ fold degenerate, i.e.,  $\lambda_{1}<\lambda_{2}=\lambda_{3}=\cdots=\lambda_{r+1}<\lambda_{r+2}$, we may assume the $r$ eigenfunctions associated with the eigenvalue $\lambda$ transformed into another system of such eigenfunctions by means of an orthogonal transformation 
\begin{equation}
	u_n^*=\sum_{j=2}^{r+1}\gamma_{n,j}u_j,  \  \ \ (n=2,\cdots,r+1)\\
\end{equation}
which will be determined later. We now assume for $n=2,,\cdots,r+1$, 
\begin{equation*}
	\begin{split}
		\lambda_{n}(t)&=\lambda_{k}+t\mu_{n,1}+t^2\mu_{n,2}+\cdots,\\
			u_n(t)&=u_n^*+tv_{n,1}+t^2v_{n,2}+\cdots,		
	\end{split}
\end{equation*}
Similarly, by $Lu_n^*=\lambda_{n}u_n^*$ and \eqref{202301171708}, 
\begin{equation*}
	Lv_{n,1}+Pu_n^*=\mu_{n,1}u_n^*+\lambda_{n}v_{n,1},
\end{equation*}
multiply above equation by $u_l$ and integrate on $\Omega$, hence
\begin{equation*}
(\lambda_{l}-\lambda_{n})\iint_{\Omega}	u_l v_{n,1}dxdy =\sum_{j=2}^{r+1}\gamma_{n,j} \left(\iint_\Omega  
\mu_{n,1}u_ju_l-Pu_ju_ldxdy\right),
\end{equation*}
and hence in particular for $l=2,\cdots, r+1$ we know
\begin{equation*}
	0=\sum_{j=2}^{r+1}(d_{j,l}-\mu_{n,1}\delta_{j,l})\gamma_{n,j}, \ \ \ (l,n=2,\cdots,r+1),
\end{equation*}	
where $d_{j,l}=\iint_\Omega Pu_ju_ldxdy$. From these $r^2$ equations, $\mu_{n,1}$, $n=2,\cdots,r+1$, may be uniquely determined as roots of the characteristic equation $|d_{j,l}-\mu_{n,1}\delta_{j,l}|=0$ \cite[Chapter I]{RichardCourant}. For simplicity, select a system of orthonormal eigengunctions $u_2,\cdots,u_{r+1}$ for which $d_{j,l}=0$, $j \neq l$. Then the orthogonal matrix $(\gamma_{n,j})$ is also determined uniquely, and the $u_n^*$ is now fixed. Let us designate these  $u_n^*$ by $u_n$. The matrix $(d_{j,l})$ in the new notation is a diagonal matrix with the elements 
$d_{n,n}=\mu_{n,1}$, the remaining elements are zero. Then the first order perturbation of the eigenvalue is given by $\mu_{n,1}=d_{n,n}$.
\end{proof}

\begin{theorem}
	Let  $V^*\in S_p\ (p> 2)$ be a 	minimizer of $\Gamma(V)$ for the eigenvalue problem \eqref{202301172048}. Then
	
	\item(1) $\lambda_2(V^*)$ is non-degenerate.
	
	\item(2) $\supp V^* \subsetneq \Omega$ and $\|V^*\|_{L^p(\Omega)}=M$, where $\supp V^*\subsetneq \Omega$ means $|\Omega-\supp V^*|>0$.   
	
	\item(3) Moreover, 
	\begin{alignat*}{2}
			|u_2^*|^2-|u_1^*|^2 
			&\ge 0 \   && \mbox{\rm on} \  \Omega \backslash \supp V^*,\\
			|u_2^*|^2-|u_1^*|^2
			&=c(V^*)^p\  \  &&  \mbox{\rm a.e.\! on} \ \supp V^* ,				
		\end{alignat*}
for some constant $c<0$, where  $u_1^*$ and $u_2^*$ are the first and second normalized eigenfunctions with respect to $V^*$, respectively.  
\end{theorem}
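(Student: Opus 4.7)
The plan is to extract first-order conditions at the minimizer $V^*$ by feeding carefully chosen admissible perturbations $P$ into the derivative formulas of Lemma \ref{202304021038}. After normalizing $u_1^*$ and $u_2^*$ in $L^2$, the minimality of $V^*$ in $S_p$ translates into: for two-sided admissible $P$ one has $\iint_\Omega P(|u_2^*|^2-|u_1^*|^2)\,dxdy = 0$, while for perturbations that are strongly admissible only for $t\ge 0$ one has $\iint_\Omega P(|u_2^*|^2-|u_1^*|^2)\,dxdy \ge 0$. The proof is then a matter of picking the right test $P$ in each of three regimes. I will first establish (2), then (3), then come back to (1), because the structural information on $\supp V^*$ from (2) is needed to manufacture the perturbations that rule out degeneracy.

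For part (2), I would first dispose of the case $\|V^*\|_{L^p}<M$: here every bounded $P$ is two-sided admissible for small $|t|$, forcing $|u_2^*|^2=|u_1^*|^2$ a.e. Since $u_1^*>0$ by Lemma \ref{202211101008}, this means $u_2^*=\pm u_1^*$ on complementary measurable sets; the continuity given by Theorem \ref{201301111756} together with the PDEs $L u_j^*+V^*u_j^*=\lambda_j^*u_j^*$ then forces $(\lambda_2^*-\lambda_1^*)u_1^*=0$ on an open set, contradicting $u_1^*>0$. The same style of argument rules out $\supp V^*=\Omega$: on this assumption every two-sided tangential $P$ (those with $\iint(V^*)^{p-1}P\,dxdy=0$) is admissible, so $|u_2^*|^2-|u_1^*|^2$ must be a constant multiple of $(V^*)^{p-1}$ on $\Omega$; integrating against $1$ and using $\|u_1^*\|_{L^2}=\|u_2^*\|_{L^2}=1$ shows the constant is zero, and one is back in the previous case.

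For part (3), I would localize the previous computation. On the interior of $\supp V^*$, perturbations $P$ with $\iint(V^*)^{p-1}P\,dxdy=0$ are two-sided admissible, which yields $|u_2^*|^2-|u_1^*|^2 = c\,(V^*)^{p-1}$ a.e.\ on $\supp V^*$ for some Lagrange constant $c$. On $\Omega\setminus\supp V^*$, only non-negative $P$'s are admissible (compensated by a small decrease of $V^*$ inside $\supp V^*$ to respect the $L^p$ bound), so $\iint_\Omega P(|u_2^*|^2-|u_1^*|^2)\ge 0$ for all such $P$ delivers $|u_2^*|^2\ge|u_1^*|^2$ pointwise a.e.\ off $\supp V^*$. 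To fix the sign of $c$, I would use the one-sided admissible perturbation $P=-V^*$ (so $V^*+tP=(1-t)V^*\in S_p$ for $t\in[0,1]$): this gives $-\iint V^*(|u_2^*|^2-|u_1^*|^2)\,dxdy\ge 0$, which after substitution reads $c\|V^*\|_{L^p}^p\le 0$, i.e.\ $c\le 0$; and $c=0$ would collapse to $|u_2^*|^2=|u_1^*|^2$, again a contradiction via part (2).

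For part (1), suppose for contradiction that $\lambda_2(V^*)$ has multiplicity $r\ge 2$ with orthonormal basis $\phi_1,\dots,\phi_r$. The degenerate case of Lemma \ref{202304021038} gives $r$ branches $\Gamma_m(t)$ with derivatives $\iint P(|u_{2,m}|^2-|u_1^*|^2)\,dxdy$ in a basis $\{u_{2,m}\}$ diagonalizing $P$ on the eigenspace. Since $\lambda_2(V^*+tP)=\min_m \lambda_{2,m}(V^*+tP)$, two-sided admissibility of $P$ combined with $\Gamma(V^*+tP)\ge \Gamma(V^*)$ forces $\min_m d_{mm}=\max_m d_{mm}=\iint P|u_1^*|^2\,dxdy$ where $d_{mm}=\iint P|u_{2,m}|^2\,dxdy$; equivalently, the symmetric matrix $M_{jk}=\iint P\phi_j\phi_k\,dxdy$ is a scalar multiple of the identity for every two-sided admissible $P$. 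Using the support information from part (2), I can take $P=(\phi_1^2-\phi_2^2)\eta$ with $\eta$ a cutoff supported in a compact piece of the interior of $\supp V^*$, plus a small tangential correction; a direct computation gives $M_{11}-M_{22}=\iint P(\phi_1^2-\phi_2^2)\,dxdy$, and as $\eta$ increases to $\mathbf{1}_{\supp V^*}$ this tends to $\iint(\phi_1^2-\phi_2^2)^2\,dxdy$, which must then be zero. This forces $\phi_2=\pm\phi_1$ a.e., contradicting orthogonality via the same PDE argument as in (2). The main obstacle is precisely this last construction: one must produce enough genuinely two-sided admissible $P$'s, simultaneously respecting the tangential constraint $\iint(V^*)^{p-1}P\,dxdy=0$ and the non-negativity constraint on $\Omega\setminus\supp V^*$, to justify passing $M_{jk}=\alpha(P)\delta_{jk}$ to the concrete test function above; this is where the preliminary results $\|V^*\|_{L^p}=M$ and $|\Omega\setminus\supp V^*|>0$ become indispensable.
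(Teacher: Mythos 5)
Your treatment of parts (2) and (3) follows essentially the same route as the paper: apply the perturbation formulas of Lemma~\ref{202304021038} with strongly admissible one-sided perturbations $\chi_{G_j}$ to obtain pointwise sign information via Lebesgue differentiation, and with two-sided tangential perturbations to extract a Lagrange-multiplier identity of the form $|u_2^*|^2-|u_1^*|^2=c(V^*)^{p-1}$ on $\supp V^*$. (Your organization differs slightly --- you dispose of $\|V^*\|_{L^p}<M$ first and kill the constant by integrating the identity, whereas the paper proceeds case by case through $\|V^*\|_{L^p}=0$, $0<\|V^*\|_{L^p}<M$, and $\supp V^*=\Omega$ --- and you implicitly, and correctly, repair the typographical error $(V^*)^p\rightsquigarrow(V^*)^{p-1}$ in the theorem statement.) One small imprecision worth flagging: when $\|V^*\|_{L^p}<M$, it is not true that ``every bounded $P$ is two-sided admissible''; the nonnegativity constraint $V^*+tP\ge 0$ for $t<0$ restricts $P$ to perturbations that are controllable by $V^*$, so two-sided tests only give information on $\supp V^*$, and one still needs a one-sided argument (as in the paper's Steps~2--3) plus the zero-measure lemma to close the case $V^*\equiv 0$.

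For part (1) your approach is genuinely different from the paper's, and it has a concrete gap at the step you yourself flag as the obstacle. The identity $M_{jk}=\alpha(P)\delta_{jk}$ for every \emph{two-sided} admissible $P$ is correct, but your chosen test $P=(\phi_1^2-\phi_2^2)\eta+Q$ does not yield $\iint(\phi_1^2-\phi_2^2)^2\,dxdy=0$ in the limit $\eta\uparrow\mathbf{1}_{\supp V^*}$. Two things go wrong: (i) the cutoff $\eta$ is necessarily supported in (a compact subset of) $\supp V^*$, so at best the limit picks up $\iint_{\supp V^*}(\phi_1^2-\phi_2^2)^2$, not the integral over $\Omega$; and (ii) the tangential correction $Q$, chosen to enforce $\iint (V^*)^{p-1}P\,dxdy=0$, does not vanish in the limit, because $\iint_{\supp V^*}(V^*)^{p-1}(\phi_1^2-\phi_2^2)\,dxdy$ need not be zero, so $\iint Q(\phi_1^2-\phi_2^2)\,dxdy$ contributes a nonzero term. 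The honest conclusion from this family of tests is $\phi_1^2-\phi_2^2=c(V^*)^{p-1}$ a.e.\ on $\supp V^*$ (and likewise $\phi_1\phi_2=c'(V^*)^{p-1}$ from $M_{12}=0$), which is precisely the degenerate analogue of part (3) and by itself does not contradict orthogonality. The paper instead finishes with a boundary-point argument (Step~8): after re-running Steps~2--6 for each branch $\Gamma_m$, it picks $x_0\in\partial(\supp V^*)\cap\Omega$, uses continuity (Theorem~\ref{201301111756}) and the sign conditions to get $|u^*(x_0)|=|u_1^*(x_0)|\ne 0$ for every normalized vector $u^*$ in the second eigenspace, and then forms $u^*=(u_{2,a}-u_{2,b})/\sqrt2$ which vanishes at $x_0$, violating $|u^*|\ge|u_1^*|$ on $\Omega\setminus\supp V^*$ near $x_0$. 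If you want to keep your functional-analytic route you would need an additional idea to convert the Lagrange identities on $\supp V^*$ into a contradiction; as written the argument does not close.
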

	 
\begin{proof}
{\it Step 1}. Suppose $\lambda_2(V^*)$ is simple. We claim that the set $Q=\{x \in \Omega:|u_2^*|^2=|u_1^*|^2\}$ can only happen on a set of zero. 

By contradictory, without losing generality, we suppose the set $Q^+=\{x \in Q: u_2^*>0\}$ is of positive measure,  we know $Q=Q^+ \cup Q^-$ in view of that $u_1^*>0$ on $\Omega$, where $Q^-=\{x \in Q: u_2^*<0\}$.  

Clearly, $u_1^*-u_2^*=0$ a.e.\! on $Q^+$ and $u_1^*-u_2^* \in H_{\rm loc}^1(\Omega)$, denote $u_e=u_1^*-u_2^*$, we have $Du_e=0$ a.e. on $Q^+$ by \cite{gilbarg2015elliptic}. Since $u_e \in H_{\rm loc}^2(\Omega)$ by Theorem \ref{201301111756}, we obtain that $\Delta u_e =0$ a.e. on $Q^+$ by Lemma \ref{202301112303}.  Therefore $y^{2\alpha_1}\partial_{x}^2 u_e+ x^{2\alpha_2}\partial_{y}^2 u_e =0$ a.e. on $Q^+$,  substituting it into the eigenvalue problem \eqref{202301172048}, we see that $(\lambda_2-\lambda_1)u_1^*=0$ a.e. on $Q^+$, which is impossible since $u_1>0$ on $\Omega$.

{\it Step 2}. We exclude the situation of $\|V^*\|_{L^p(\Omega)}=0$, i.e., $V=0$ a.e.\! on $\Omega$. 

By contradictory,  we assume $V^*=0$ a.e.\! on $\Omega$. Let $x_0$  be any point in $\Omega$ and $G_j \subset \Omega$ be any measurable sequence of subsets containing $x_0$, then, perturbations of the form $P(x)=\chi_{G_j}$ are  admissible for small $t\ge 0$. Then we see that
 \begin{equation}\label{202212121850}
	\left.\frac{d\Gamma(V^*+tP(x))}{dt}\right|_{t=0}=\iint_{G_j} |u_2^*|^2-|u_1^*|^2 dxdy \ge 0.
\end{equation}
Dividing the $|G_j|$ and letting $G_j$ shrink nice to $x_0$ as $j \to \infty$,  from the Lebesgue Differential Theorem we have $|u_2^*|^2-|u_1^*|^2 \ge 0$  on $\Omega$, it follows from the Step 1 that $\iint_{\Omega}|u_2^*|^2-|u_1^*|^2 dxdy > 0$. However, this is contrary to the conditions of normalization $\iint_\Omega |u_2^*|^2-|u_1^*|^2dxdy=0$.

{\it Step 3}. Now, we will rule out the possibility that  $0<\|V^*\|_{L^p}< M$. 
Suppose $0<\|V^*\|_{L^p}< M$, consider the set $R=\supp V^*$ (in the sense of distribution), then for any $x_0 \in R$, and $R_j \subset R$ be any measurable sequence of subsets containing $x_0$, in fact, $P(x)=\chi_{R_j}$ are  admissible for small $t\in (-\epsilon,\epsilon)$, it is not hard to discover that
\begin{equation}\label{202212131008}
	\left.\frac{d\Gamma(V^*+tP(x))}{dt}\right|_{t=0}=\iint_{G_j} |u_2^*|^2-|u_1^*|^2dxdy = 0.
\end{equation} 
Once again dividing by $|R_j|$ and letting $R_j$ shrink nice to $x_0$ as $j \to \infty$, we find by the Lebesgue Differential Theorem that $|u_2^*|^2-|u_1^*|^2=0$ on $R$. This reveals that $R$ is a zero measure subset by the conclusion of Step 1. One can  immediately receive that $V^*=0$ a.e., this is inconsistent with the fact $\|V^*\|_{L^p}> 0$.

{\it Step 4}. Based on the above results, we conclude that $\|V^*\|_{L^p(\Omega)}=M$. Now, we prove that the set $\Omega \backslash \supp V^*$ must be a positive measure set. 

Otherwise, we assume $V>0$ a.e.\! on $\Omega$. For $x_0\in \Omega$, and $H_j \subset \Omega$ be any measurable sequence of subsets containing $x_0$, $P(x)=\chi_{H_j}$ are  admissible for small $t \le 0$, 
\begin{equation}\label{202212131054}
	\left.\frac{d\Gamma(V^*+tP(x))}{dt}\right|_{t=0}=\iint_{H_j} |u_2^*|^2-|u_1^*|^2 dxdy \le  0,
\end{equation} 
so that $|u_2^*|^2-|u_1^*|^2 \le 0$ on $\Omega$. Combined with the conclusion of Step 1 again, we have  $\iint_{\Omega}|u_2^*|^2-|u_1^*|^2 dxdy < 0$, this is contrary to the normalization condition. This implies that $\supp V^* \subsetneq \Omega$. 

{\it Step 5}.  We show that 
\begin{equation}\label{202212132244}
	|u_2^*|^2-|u_1^*|^2=c(V^*)^{p-1} \ \mbox{a.e. on }  \supp V^*,	
\end{equation} 
here $c<0$ is a constant and $V^*$ is continuous a.e..

On one hand, let us consider perturbations (tangential to $\partial{S_p}$) of the form 
\begin{equation}\label{202212132214}
	P(x)=\frac{\chi_{T_1}}{\iint_{T_1} (V^*)^{p-1}dxdy}- \frac{\chi_{T_2}}{\iint_{T_2} (V^*)^{p-1}dxdy},
\end{equation}
where $T_1$ and $T_2$ are disjoints subsets lie in $\supp V^*$, we observe that $P(x)$ as in \eqref{202212132214} is admissible by \eqref{202212132213}  for both positive and negative small $t$. Hence,
\begin{equation}\label{202212132217}
\left.\frac{d\Gamma(V^*+tP(x))}{dt}\right|_{t=0}= \frac{ \iint_{T_1}|u_2^*|^2-|u_1^*|^2dxdy} {\iint_{T_1} (V^*)^{p-1} dxdy}- \frac{ \iint_{T_2}|u_2^*|^2-|u_1^*|^2dxdy} {\iint_{T_2} (V^*)^{p-1} dxdy} = 0 
\end{equation} 
for all admissible sets $T_1,T_2$, from which we have \eqref{202212132244} is established.

On the other hand,  we can now prove that 
\begin{equation}\label{202302211744} 
	|u_2^*|^2-|u_1^*|^2 \le 0 \ \mbox{a.e. on }  \supp V^*,	
\end{equation}
utilizing the same strong perturbation argument as \eqref{202212131054}.

In conjunction with the result \eqref{202212132244},\eqref{202302211744}, we have  $c<0$, and $V^*$ is continuous a.e. since $u_1^*$ and $u_2^*$ are locally continuous on $\Omega$ by Theorem \ref{201301111756}. 

{\it Step 6}. We will illustrate that $|u_2^*|^2-|u_1^*|^2 \ge 0$ on $\Omega \backslash \supp V^*$.  

Indeed, let $x$  be any point in $\Omega \backslash \supp V^*$ and $F_j \subset \Omega \backslash \supp V^*$ be any measurable sequence of subsets containing $x$, then $P(x)=\chi_{F_j}$ are  admissible for small $t\ge 0$ based on \eqref{202212132213}, according to the same theory as before, we see that $|u_2^*|^2-|u_1^*|^2 \ge 0$ on $\Omega \backslash \supp V^*$.

{\it Step 7}.  We prove  $\lambda_2(V^*)$ can not be degenerate. 
Suppose $\lambda_2(V^*)$ are $r$ fold degenerate, by Lemma \ref{202304021038}, then for any admissible pertubation $P(x)$, the cluster of eigenvalues {$\lambda_{2,m}(t)$} into which $\lambda_{2}(V^*)$ would split could be arranged to be analytic in $t$
at $t=0$, and likewise for the associated orthonormalized eigenfunctions $\{u_{2,m}\}$ (depending on $P$).

Suppose now $u^*$ is any normalized vector in the eigenspace for $\lambda_{2}$, then
\begin{equation*}
	u^*=\sum_{m=1}^{r}c_mu_{2,m}^*, \quad  \sum_{m=1}^{r}|c_m|^2=1,
\end{equation*}
where $c_m \in \mathbb{R}$, and 
\begin{equation}\label{202212141827}
	\begin{split}
\iint_{\Omega} P(x)\left(|u^*|^2-|u_1^*|^2\right)dxdy &=	\iint_{\Omega} P(x) \left(\sum_{m=1}^{r}  |c_m|^2|u_{2,m}^*|^2-|u_1^*|^2 \right) dxdy\\
&=\sum_{m=1}^{r}  |c_m|^2 \iint_{\Omega} P(x) (|u_{2,m}^*|^2-|u_1^*|^2)dxdy.		
	\end{split}
\end{equation}

As we did in the Step 2 and Step 3, we can remove  the possibility that  $0 \le \|V^*\|_{L^p} <M$. More precisely, 

(a) Suppose $\|V^*\|_{L^p}=0$, if $P(x)$ is a positive perturbation for small $t \ge 0$, then we must have 
\begin{equation*}
\left.\frac{d\Gamma_m}{dt}\right|_{t=0}\ge 0	
\end{equation*}
for any $m$. Otherwise,  suppose $\frac{d\Gamma_m}{dt}\Big|_{t=0}<0$, we would have 
\begin{equation}\label{202212151856}
\Gamma(t_0) \le \Gamma_m(t_0) <\Gamma_m(0)= \Gamma(0)	
\end{equation}
 for some  $t_0 >0$, this contradicts the fact that $V^*$ is a minimum. Coupling with \eqref{202212141827}, we find 
\begin{equation*}
\iint_{\Omega} P(x)\left(|u^*|^2-|u_1^*|^2\right)dxdy \ge 0,
\end{equation*}
as discussed as before, we have $|u^*|^2-|u_1^*|^2 \ge 0$ on $\Omega$.  This will cause the same contradiction in Step 2. 

(b) Suppose $0<\|V^*\|_{L^p}<M$, consider the set $R=  \supp V^* $, we have 
\begin{equation}\label{202212142029}
	\left.\frac{d\Gamma_m}{dt}\right|_{t=0}= 0	
\end{equation}
for all admissible perturbations $P(x)$ as $t \in (-\epsilon,\epsilon)$ and all $m$, where $\supp P(x) \subset R$. Otherwise, if  
$\frac{d\Gamma_m}{dt}\Big|_{t=0}> 0$, then would lead to the same contradiction for some $t_0<0$ as \eqref{202212151856}. If $\frac{d\Gamma_m}{dt}\Big|_{t=0}<0$, we would find the result \eqref{202212151856} is established for some $t_0>0$, which is  impossible. Based the above information, we have 
\begin{equation*}
	\iint_{R} P(x)(|u^*|^2-|u_1^*|^2)dxdy=0.
\end{equation*}
Repeating the Step 3, we know that this case is excluded.

{\it Step 8}. We argue as Step $4$-Step $6$, we conclude that 
\begin{equation}\label{202212151109}
	\begin{split}
		|u^*|^2-|u_1^*|^2 \le 0 \ &\mbox{on}  \supp V^*,\\
		 |u^*|^2-|u_1^*|^2 \ge 0 \  &\mbox{on} \  \Omega \backslash \supp V^*.
	\end{split}
 \end{equation}
  And suppose there are two orthonormal vectors, $u_{2,a}$ and $u_{2,b}$ in the second eigenspace, and that $x_0$ is a point on $\partial{B} \cap \Omega$, $B=\supp V^*$, so that we may take 
$u_{2,a}^*(x_0)=u_{2,b}^*(x_0)=u_{1}(x_0)^* \neq 0$, then the normalized  eigenfunction 
\begin{equation*}
	u^*(x)=\frac{1}{\sqrt{2}}(u_{2,a}^*(x)-u_{2,b}^*(x))
\end{equation*}
is zero at $x=x_0$, then $u^*(x_0)<u_1^*(x_0)$. Because of the continuity for eigenfunctions, it would follow that $u_1^*(x)>u^*(x)$ on part of $\Omega\backslash B$,  which is contrary to \eqref{202212151109}.
\end{proof}

\noindent{\bf Acknowledgement}

\vspace{2mm}

This work is supported by the National Natural Science Foundation of China, the Science-Technology Foundation of Hunan Province.

\normalem

\bibliographystyle{plain}
\bibliography{ref.bib}

\end{document}